\numberwithin{equation}{section}
\newcommand{\rn}{\mathbb{R}^n}
\def\A{\mathcal A}
\def\H{\mathcal H}
\def\R{\mathbb R}
\def\N{\mathbb N}
\def\e{\varepsilon}
\def\vphi{\varphi}
\def\l{\lambda}
\def\Om{\Omega}
\renewcommand{\d}{\mathrm{d}}
\renewcommand{\l}{\lambda}
\renewcommand{\L}{\Lambda}
\newcommand{\ov}{\overline}
\def\C{\mathbf{C}}
\def\B{\mathcal{B}}
\def\A{\mathcal{A}}
\def\d{\delta}
\newtheorem*{theorem*}{Theorem}
\newtheorem{theorem}{Theorem}[section]
\newtheorem{lemma}[theorem]{Lemma}
\newtheorem*{proposition*}{Proposition}
\newtheorem{remark}[theorem]{Remark}
\newtheorem*{remark*}{Remark}
\title[Global Lipschitz regularity for anisotropic elliptic problems]{Global Lipschitz regularity for anisotropic elliptic problems with natural gradient growth 
}
\begin{document}

\begin{abstract}
Homogeneous Dirichlet and Neumann boundary-value problems for anisotropic elliptic equations of $p$-Laplace type are considered. They emerge as Euler-Lagrange equations of integral functionals of the Calculus of Variations built upon possibly anisotropic norms of the gradient of trial functions. Global Lipschitz regularity of solutions  is established under weakest possible integrability assumptions on  the right-hand side, which can also include lower order terms with natural growth in the gradient.
 The results hold in either convex domains, or domains enjoying minimal integrability assumptions on the curvature of their boundary.
\end{abstract}

\author{Carlo Alberto Antonini \textsuperscript{1}}
\address{\textsuperscript{1}  Istituto Nazionale di Alta Matematica ``Francesco Severi'' (INdAM) and
Dipartimento di Matematica e Informatica ``Ulisse Dini'',
Universit\`a di Firenze,
Viale Morgagni 67/A, 50134
Firenze,
Italy}
\email{antonini@altamatematica.it}
\urladdr{0000-0002-7663-1090}

\author{Andrea Cianchi\textsuperscript{2}}
\address{\textsuperscript{2}Dipartimento di Matematica e Informatica ``Ulisse Dini'',
Universit\`a di Firenze,
Viale Morgagni 67/A, 50134
Firenze,
Italy}
\email{andrea.cianchi@unifi.it}
\urladdr{0000-0002-1198-8718}


\subjclass[2020]{35J25, 35J60}
\keywords{Anisotropic elliptic equations, $p$-Laplacian, Orlicz-Laplacian,
 natural gradient growth, Dirichlet problems, Neumann problems, convex domains}

\maketitle

\section{Introduction}
This work deals with boundary value problems for elliptic equations of the form
\begin{equation}\label{equation}
    - \Delta_p^H u = g(x,u,\nabla u) \qquad \text{in $\Omega$,}
\end{equation}
where:
\\
(i) $\Omega$ is a bounded open set in $\rn$, with $n\geq 2$;
\\
(ii) $\Delta_p^H$ is the anisotropic $p$-Laplace operator, associated with an exponent $p>1$ and a norm $$H : \rn \to [0,\infty),$$  defined as 
\begin{equation}\label{def:anpl}
    \Delta_p^H u= \tfrac{1}{p}\mathrm{div}\big( \nabla_\xi H^p(\nabla u)\big)
\end{equation}
for a function $u:\Omega\to \R$;
\\ (iii) $g: \Omega \times \R \times \rn \to \R$ is a Carath\'eodory function such that, for a.e. $x\in \Omega$,
\begin{align}
    \label{g}
    \kappa H(\xi)^p - f(x) \leq g(x,t,\xi) \leq \kappa H(\xi)^p + f(x) \quad \text{for $t\in \R$ and $\xi \in \rn$,}
\end{align}
for some constant  $\kappa\in \R$ and some function $f: \Omega \to [0, \infty)$.

Minimal assumptions on the domain $\Omega$, on the norm $H$, and on the function $f$ are exhibited for any bounded weak solution $u$ to the equation \eqref{equation}, subject to  homogeneous Dirichlet boundary conditions  
or  co-normal Neumann conditions,
to be globally Lipschitz continuous on $\Omega$. Thanks to the regularity to be imposed on $\Omega$, the latter property of $u$ is equivalent to the global boundedness of $|\nabla u|$.

The a priori boundedness assumption on $u$ is indispensable,
due to the critical   --  also called \lq\lq natural" according to a standard terminology -- $p$-growth in the gradient of the right-hand side of the equation \eqref{equation}. It is indeed well-known that the gradient of an unbounded weak solution to the Dirichlet problem can be unbounded, even in the special case when $\Delta_p^H=\Delta$, the plain Laplace operator corresponding to the choice $p=2$ and $H(\xi)=|\xi|$  --   see \cite{frehse}. 

The literature on anisotropic elliptic equations is quite rich. Especially, equations involving the anisotropic  $p$-Laplace operator  \eqref{def:anpl} have been analyzed along diverse directions in the last decades. A sample of contributions on this topic   includes   \cite{al:al, acf, accfm, CHN, cisa1,cfr20, dpv, EMSV, FSV, fe:ka,  mos:mar, MST, Sp:V}. 
 
 Existence results for solutions to boundary value problems for elliptic equations with natural gradient growth in the lower order term 
  are the subject of \cite{ADP, femu, figuer, kaz,ser}. A renewed interest in equations of this kind    arose 
in connection with the level set formulation of the inverse mean curvature flow -- see \cite{rmoser}, and   \cite{rivas,fdp, mari} where the anisotropic case is treated.

 $L^\infty$-bounds for the gradient of solutions are a central issue in the regularity theory of elliptic partial differential equations and a fundamental step in the proof of further properties. The local Lipschitz continuity of solutions  to classes of equations   with natural gradient growth, including \eqref{equation}, has been investigated in  nowadays classical papers, such as  
 \cite{diben, lieb91,tol}.
 The global Lipschitz continuity of the solutions to homogeneous Dirichlet and Neumann problems associated with equations of the form \eqref{equation} is established in \cite{lieb88} under standard conditions on $\Omega$ and $f$.
As mentioned above, the punctum of our work is the detection of  weakest possible  assumptions on $\Omega$ and $f$ for this property of solutions to hold.
The present research complements 
earlier results in the same spirit from \cite{cia11} (see also \cite{cia14} dealing with systems) under the following respects:

\medskip
\par\noindent
$\bullet$ The dependence on $\nabla u$  of the elliptic operator is through a general norm $H$;

\medskip
\par\noindent
$\bullet$  Right-hand sides of the equation also depending on $\nabla u$, with the natural growth, are allowed.

\medskip

Our hypotheses on $H$, $f$, and $\Omega$ read as follows.
We assume  
 that $H^2\in C^2(\R^n\setminus\{0\})$ and fulfills the ellipticity condition
 \begin{equation}\label{ell:H}
    \l\,|\eta|^2\leq \tfrac{1}{2} \nabla^2_\xi H^2(\xi)\,\eta \cdot \eta \leq \L\,|\eta|^2\quad\text{for $\xi\neq 0$, and $\eta\in\rn$,}
\end{equation}
for some positive constants $\l$ and $\L$. Notice that the essence of \eqref{ell:H}
is the lower estimate, the upper one being a consequence  of the  regularity of $H^2$.
\\ The lower estimate is equivalent to the condition that
\begin{equation}\label{H:unifconv}
 \{H(\xi)<1\}\text{ is uniformly convex},
\end{equation}
where uniform convexity of a convex set in $\rn$ means that  all the principal curvatures of its boundary are bounded away from zero.
\\
The condition  \eqref{ell:H} is satisfied, for example, by any function $H$ of the form
\begin{equation*}
    H(\xi)=\big( \alpha \,K^p(\xi)+\beta \,|\xi|^p\big)^{1/p},
\end{equation*}
for some norm $K\in C^2(\rn\setminus\{0\})$, and some $p>1$ and $\alpha, \beta >0$ -- see \cite{cfv} and \cite[Example 2.1]{acf}. 
In particular, the norm
\begin{equation*}
    H(\xi)=\bigg( \Big(\sum _{i=1}^n |\xi_i|^q\Big)^{p/q}+\,|\xi|^p\bigg)^{1/p}
\end{equation*}
 is admissible for any $p>1$ and any $q \geq 2$.
\\ Further instances of admissible norms $H$ are  the Minkowski gauges of  bounded,  uniformly convex sets $E$, symmetric about the origin, and such that $\partial E \in C^2$. Namely,
\begin{equation*}
   H(\xi) = \inf \{r>0\,:\, \xi\in r\,E \}.
\end{equation*}
This is a consequence of the fact that $\{H<1\} = E$.

The function $f$ in \eqref{g}, governing the growth in the $x$-variable,  is subject to an appropriate integrability condition. Specifically, we assume that either $f\in  L^{n,1}(\Omega)$ or $f \in (L^2)^{(1,\frac{1}{2})}(\Omega)$, according to whether $n\geq 3$ or $n=2$. Here, $L^{q,r}(\Omega)$ and $L^{(q,r)}(\Omega)$  denote, for $1\le q <\infty$ and $1\leq r\leq \infty$, the Lorentz spaces whose norms are defined via the decreasing rearrangement, and the maximal function  of the decreasing rearrangement, respectively. Recall that these two families of spaces agree, up to equivalent norms, if $1<q<\infty$ and $1\leq r <\infty$. Moreover, $(L^2)^{(1,\frac{1}{2})}(\Omega)$ stands for the   set of those functions $f$ such that $f^2\in L^{(1,\frac{1}{2})}(\Omega)$. For brevity, we define
\begin{equation}\label{X:f}
    X_{n}(\Omega)=
    \begin{cases}
        L^{n,1}(\Omega)\quad & \text{if $n\geq 3$}
        \\

        (L^2)^{(1,\frac{1}{2})}(\Omega)\quad & \text{if $n=2$.}
    \end{cases}
\end{equation}

Finally, Lipschitz domains $\Omega$ whose boundary $\partial \Omega$ is endowed with weak curvatures enjoying a sufficient degree of integrability are allowed. This is prescribed by requiring that the relevant curvatures belong to the Lorentz space $L^{(n-1,1)}(\partial \Omega)$, with respect to the $(n-1)$--dimensional Hausdorff measure $\mathcal H^{n-1}$ on $\partial \Omega$. As mentioned above, $L^{(n-1,1)}(\partial \Omega)=L^{n-1,1}(\partial \Omega)$ if $n \geq 3$. On the other hand, if $n=2$, then $L^{(1,1)}(\partial \Omega)= L\log L(\partial \Omega)$, a Zygmund space, which is in turn a special Orlicz space.
Precisely, the
 notation $\partial \Omega\in W^2L^{(n-1,1)}$ means that the boundary of $\Omega$ can be locally represented as the graph of a Lipschitz function of $(n-1)$ variables endowed with second-order weak derivatives in the space $L^{(n-1,1)}$. This amounts to requiring that the second fundamental form $\B_\Omega$ of $\partial \Omega$ belongs to $L^{(n-1,1)}(\partial \Omega)$.

Our main result about the Dirichlet problem
\begin{equation}\label{Ex:dir}
    \begin{cases}
        -\Delta_p^H u= g(x,u, \nabla u)\quad & \text{in $\Omega$}
        \\
        u=0\quad & \text{on $\partial \Omega$,}
    \end{cases}
\end{equation}
and the Neumann problem
\begin{equation}\label{Ex:neu}
\begin{cases}
     -\Delta_p^H u= g(x,u, \nabla u)\quad & \text{in $\Omega$}
\\
\nabla_\xi H^p(\nabla u)\cdot \nu=0\quad & \text{on $\partial \Omega$,}
\end{cases}
\end{equation}
under the boundary regularity on $\Omega$ described above is stated in Theorem \ref{thm:example1} below. Here, $\nu$ denotes the outer unit normal on $\partial \Omega$.
 Recall that  a weak solution to the problem \eqref{Ex:dir} is a function 
 $u\in W^{1,p}_0(\Omega)$ such that
\begin{equation}\label{eqweak}
    \int_{\Omega}\tfrac{1}{p}\nabla_\xi H^p(\nabla u)\cdot \nabla \varphi\,dx=\int_\Omega g(x,u, \nabla u)\,\varphi\,dx
\end{equation}
for every $\varphi\in W^{1,p}_0(\Omega)\cap L^\infty(\Omega)$. A weak solution    to \eqref{Ex:neu} is a function $u\in W^{1,p}(\Omega)$  such that Equation \eqref{eqweak} holds for every $\varphi\in W^{1,p}(\Omega)\cap L^\infty(\Omega)$.

\begin{theorem}[Global Lipschitz regularity in minimally regular domains]\label{thm:example1}
    Let $\Omega$ be a bounded Lipschitz domain in $\rn$, $n\geq 2$, such that $\partial \Omega \in W^2L^{(n-1,1)}$. Let  $H$   be a norm in $\rn$ such that $H^2\in C^2(\R^n\setminus\{0\})$ and fulfilling \eqref{ell:H}. Assume that
    either
    $n\geq 3$ and $f\in  L^{n,1}(\Omega)$, or $n=2$ and $f \in (L^2)^{(1,\frac{1}{2})}(\Omega)$.  
    Let $u$ be a bounded weak solution to the Dirichlet problem \eqref{Ex:dir} or the Neumann problem \eqref{Ex:neu}.
    Then $u$ is Lipschitz continuous on $\Omega$, and there exists a constant $c=c(n,p,\l,\L, \Omega)$ such that
    \begin{equation}\label{stima:ex1}
        \|\nabla u\|_{L^\infty(\Omega)}\leq c\,e^{{\frac{p|\kappa|}{(p-1)^2}}\|u\|_{L^\infty(\Omega)}}\,\|f\|^{\frac{1}{p-1}}_{X_n(\Omega)}
    \end{equation}
    where $X_n(\Omega)$ is defined as \eqref{X:f}.
  In the case when $\kappa=0$, the assumption that $u$  be bounded can be dropped.
\end{theorem}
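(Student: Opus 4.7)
The plan is to reduce Theorem~\ref{thm:example1} to the gradient-free case $\kappa=0$, which presumably constitutes a more foundational theorem of the paper, via a Cole--Hopf-type exponential substitution that absorbs the natural growth on the right-hand side into a multiplicative factor depending only on $\|u\|_{L^\infty(\Omega)}$.

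\textbf{Change of unknown.} Set
\[
v:=\Phi(u),\qquad \Phi(t):=\tfrac{p-1}{\kappa}\bigl(e^{\kappa t/(p-1)}-1\bigr),
\]
so that $\Phi'(t)=e^{\kappa t/(p-1)}$ and $\Phi''(t)=\tfrac{\kappa}{p-1}\Phi'(t)$. Using the positive $p$-homogeneity of $H^p$, which gives $\nabla_\xi H^p(\Phi'(u)\nabla u)=(\Phi'(u))^{p-1}\nabla_\xi H^p(\nabla u)$, together with Euler's identity $\xi\cdot\nabla_\xi H^p(\xi)=pH^p(\xi)$, a direct computation yields
\[
-\Delta_p^H v=(\Phi'(u))^{p-1}\bigl(-\Delta_p^H u-\kappa H^p(\nabla u)\bigr)=(\Phi'(u))^{p-1}\bigl(g(x,u,\nabla u)-\kappa H^p(\nabla u)\bigr).
\]
The two-sided bound \eqref{g} then yields $|\Delta_p^H v(x)|\le e^{\kappa\|u\|_{L^\infty(\Omega)}}f(x)$ for a.e.\ $x\in\Omega$, and hence the new source $\tilde f:=-\Delta_p^H v$ satisfies $\|\tilde f\|_{X_n(\Omega)}\le e^{\kappa\|u\|_{L^\infty(\Omega)}}\|f\|_{X_n(\Omega)}$. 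Boundedness of $u$ places $v$ in $W^{1,p}(\Omega)$, the Dirichlet condition becomes $v=\Phi(0)=0$ on $\partial\Omega$, and the co-normal Neumann condition is preserved since $\nabla_\xi H^p(\nabla v)\cdot\nu=(\Phi'(u))^{p-1}\nabla_\xi H^p(\nabla u)\cdot\nu$.

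\textbf{Application and inversion.} The gradient-free Lipschitz regularity theorem, applied to $v$ under the same hypotheses on $\Omega$ and $H$, yields
\[
\|\nabla v\|_{L^\infty(\Omega)}\le c\,\|\tilde f\|_{X_n(\Omega)}^{1/(p-1)}\le c\,e^{\kappa\|u\|_{L^\infty(\Omega)}/(p-1)}\,\|f\|_{X_n(\Omega)}^{1/(p-1)}.
\]
Since $\nabla u=e^{-\kappa u/(p-1)}\nabla v$ and $e^{-\kappa u/(p-1)}\le e^{\kappa\|u\|_{L^\infty(\Omega)}/(p-1)}$, one arrives at \eqref{stima:ex1}. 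When $\kappa=0$ the transformation is the identity, no $L^\infty$ bound on $u$ is needed, and the claim reduces directly to the gradient-free case.

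\textbf{Main obstacle.} The Cole--Hopf reduction is essentially algebraic; the real work is in the gradient-free Lipschitz estimate invoked above, which is expected to be the technical heart of the paper. Its proof requires a pointwise bound on $|\nabla v|$ under the \emph{minimal} integrability assumptions $f\in L^{n,1}(\Omega)$ (or $f\in(L^2)^{(1,\tfrac{1}{2})}(\Omega)$ if $n=2$) and $\B_\Omega\in L^{(n-1,1)}(\partial\Omega)$, typically through an integration-by-parts identity on super-level sets of $H(\nabla v)$, with boundary contributions absorbed by the curvature assumption and interior contributions controlled by Lorentz-type rearrangement inequalities. The anisotropy further demands that the classical Euclidean curvature quantities on $\partial\Omega$ be replaced by their $H$-dependent counterparts. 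A simultaneous regularization of $H$, $\partial\Omega$, and $f$, preserving uniform constants so as to enable passage to the limit, is the most delicate ingredient.
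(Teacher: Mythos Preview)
Your proposal is correct and coincides with the paper's own proof: the paper performs the same Kazdan--Kramer (Cole--Hopf) substitution $v=\psi(u)$ with $\psi'(t)=e^{\kappa t/(p-1)}$, obtains $-\Delta_p^H v=(\psi'(u))^{p-1}\bigl(g(x,u,\nabla u)-\kappa H^p(\nabla u)\bigr)$, and then invokes the gradient-free result (Theorem~\ref{thm:dirneu}) to conclude. Your ``Main obstacle'' paragraph also accurately anticipates the structure of the proof of Theorem~\ref{thm:dirneu}: integration over super-level sets of $H(\nabla v)$, absorption of boundary terms via the anisotropic second fundamental form and the $L^{(n-1,1)}$ curvature hypothesis, Lorentz-type rearrangement estimates, and a multi-level approximation of the operator, the domain, and the datum.
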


\begin{remark}
    \label{opt-om}{\rm
In the light of counterexamples   from  \cite{cia141},  the integrability assumption $\partial \Omega \in W^2L^{(n-1,1)}$  imposed on the curvatures of $\partial \Omega$ in Theorem \ref{thm:example1} is essentially sharp. }
\end{remark}
\begin{remark}{\rm
    \label{opt-f} The optimality of the  space $L^{n,1}(\Omega)$ describing the integrability of the function $f$, for $n\geq 3$,
 can be demonstrated, for $n\geq 3$, by the model case of the Poisson equation  \cite{cia92}. We do not know whether the space $(L^2)^{(1,\frac{1}{2})}(\Omega)
$ for $f$ is optimal, for nonlinear operators, when $n=2$. One has that 
\begin{equation*}
    L^{2+\varepsilon}(\Omega)\subsetneq L^2\log^{\gamma}(\Omega)\subsetneq (L^2)^{(1,\frac{1}{2})}(\Omega)\subsetneq L^{2,1}(\Omega)
\end{equation*}
for all $\varepsilon>0$, and $\gamma>2$ (see e.g. \cite[Theorem 10.3.12, Theorem 10.3.20]{pick}).
Hence, our assumption that $f\in (L^2)^{(1,\frac{1}{2})}(\Omega)$ is slightly stronger
than requiring $f\in L^{2,1}(\Omega)$. Let us just mention that in \cite{kus:min1} the latter assumption (in local form) was shown to be sufficient for the local boundedness of the gradient of solutions to isotropic $p$-Laplacian type equations when $g(x,u, \nabla u)=f(x)$.
\\
Several more recent works have addressed the Lipschitz regularity of solutions under the assumption $f \in L^{n,1}$. They include, among others,  \cite{baroni, beck:min, defil:min,defil:pic,dzhu,duz:min,kus:min,ok}.
}
\end{remark}

In the special case when $\Omega$ is convex, the same conclusions as in Theorem \eqref{thm:example1} hold without any additional assumption on $\partial \Omega$. Loosely speaking, this is possible owing to the fact that the second fundamental form of $\partial \Omega$ is globally nonnegative.
In this case, we can also be more specific about the dependence of the constant $c$ on $\Omega$ in the inequality \eqref{stima:ex1}. This dependence is just through its Lebesgue measure $|\Omega|$, its diameter $d_\Omega$, and its Lipschitz characteristics $\mathcal{L}_\Omega=(L_\Omega,R_\Omega)$. The latter  depends on the Lipschitz constant $L_\Omega$ of the functions locally describing $\partial \Omega$ and on the radius $R_\Omega$ of their ball domains, and is defined in   Section \ref{sec:prelim0}.

\begin{theorem}[Global Lipschitz regularity in convex domains]\label{thm:example}
    Let $\Omega$ be a bounded open convex set in $\rn$, with $n \geq2$. Assume that $H$ and $f$ are as in Theorem \ref{thm:example1}.
    Let $u$ be a bounded weak solution to either problem \eqref{Ex:dir} or \eqref{Ex:neu}.  Then $u$ is Lipschitz continuous on $\Omega$, and the inequality \eqref{stima:ex1}  holds for some constant  $c=c(n,p,\l, \L, |\Omega|, d_\Omega, \mathcal{L}_\Omega)$. If $\kappa =0$, then   the assumption that $u$
 be bounded can be dropped.    
\end{theorem}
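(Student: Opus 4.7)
The plan is to reduce Theorem~\ref{thm:example} to Theorem~\ref{thm:example1} via an approximation of $\Omega$ by smooth convex domains, exploiting the fact that on a convex set the second fundamental form $\B_\Omega$ is nonnegative. Accordingly, the boundary contribution that forces the assumption $\partial\Omega\in W^2 L^{(n-1,1)}$ in Theorem~\ref{thm:example1} acquires a favorable sign and can simply be discarded, freeing the final constant from any dependence on curvature norms.

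First, I would approximate $\Omega$ from inside by a sequence of bounded open convex sets $\Omega_k$ with $\partial\Omega_k\in C^\infty$, converging to $\Omega$ in Hausdorff distance. This is standard: mollifying a defining convex function, or using inf-convolutions followed by intersection with $\Omega$, furnishes such an approximation, and it can be arranged so that $|\Omega_k|\to|\Omega|$, $d_{\Omega_k}\leq d_\Omega+o(1)$, and the Lipschitz characteristics $\mathcal{L}_{\Omega_k}$ are uniformly controlled by $\mathcal{L}_\Omega$. Crucially, no uniform control on $\|\B_{\Omega_k}\|_{L^{(n-1,1)}(\partial\Omega_k)}$ is required. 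In parallel, one approximates $f$ by smooth $f_\varepsilon$ with $\|f_\varepsilon\|_{X_n(\Omega)}\leq\|f\|_{X_n(\Omega)}+o(1)$ and replaces $g$ by a family $g_\varepsilon$ still satisfying \eqref{g} with the same $\kappa$.

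For each smooth convex $\Omega_k$ and each $\varepsilon$, the regularized Dirichlet (or Neumann) problem admits a weak solution $u_{k,\varepsilon}$, bounded uniformly in terms of $\|u\|_{L^\infty(\Omega)}$ by standard $L^\infty$ a priori estimates for problems with natural gradient growth. Theorem~\ref{thm:example1} applies to $u_{k,\varepsilon}$, but its constant a priori involves $\|\B_{\Omega_k}\|_{L^{(n-1,1)}}$. The key observation is that, inside the proof of Theorem~\ref{thm:example1}, the boundary of $\Omega$ enters only through an integral of the form $\int_{\partial\Omega_k}\B_{\Omega_k}(T,T)\,d\mathcal{H}^{n-1}$, with $T$ a tangential object coming from an integration-by-parts argument applied to the auxiliary $P$-function pointwise dominating $H(\nabla u)^2$. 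Since $\B_{\Omega_k}\geq 0$, this term has the sign that allows it to be dropped from the estimate. Rerunning the proof of Theorem~\ref{thm:example1} in this setting yields~\eqref{stima:ex1} for $u_{k,\varepsilon}$ with a constant depending only on $n,p,\lambda,\Lambda,|\Omega_k|,d_{\Omega_k},\mathcal{L}_{\Omega_k}$, hence uniformly in $k$ and $\varepsilon$ through the admissible parameters $n,p,\lambda,\Lambda,|\Omega|,d_\Omega,\mathcal{L}_\Omega$.

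The final step is the passage to the limit. The uniform $W^{1,\infty}$ bound on $u_{k,\varepsilon}$, together with a uniform $W^{1,p}$ energy estimate and the monotonicity (strict convexity) of the anisotropic $p$-Laplace operator, gives compactness and identifies the limit with the prescribed weak solution $u$ of \eqref{Ex:dir} or \eqref{Ex:neu} (up to an inessential additive constant in the Neumann case). Lower semicontinuity of the $L^\infty$ norm of the gradient then transfers the bound \eqref{stima:ex1} to $u$ itself. The main obstacle will be the bookkeeping in the third step: precisely locating, within the proof of Theorem~\ref{thm:example1}, the single step at which $\B_\Omega$ enters the estimate, verifying that its contribution carries the sign compatible with being dropped when $\B_\Omega\geq 0$, and checking that every other term in the chain of inequalities depends on $\Omega$ only through $|\Omega|$, $d_\Omega$, and $\mathcal{L}_\Omega$. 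This sign check is routine in the Euclidean Reilly-type setting but is delicate here because the identities involve the anisotropic normal $\nabla_\xi H^p(\nabla u)\cdot\nu$ and $H$-weighted boundary terms rather than purely Euclidean quantities.
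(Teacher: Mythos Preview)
Your intuition that convexity gives the boundary term a favorable sign, allowing it to be dropped so that the constant sheds all curvature dependence, is correct and is exactly the mechanism the paper exploits (see \eqref{feb315} and Step~2 of the proof of Theorem~\ref{thm:conv}). But the architecture you propose has a real gap, and it differs from the paper's route in an essential way.

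The paper does \emph{not} approximate the natural-growth problem on subdomains. It first performs the Kazdan--Kramer substitution $v=\psi(u)$, $\psi(t)=\int_0^t e^{\kappa\tau/(p-1)}\,d\tau$, which by the $(p-1)$-homogeneity of $\A$ turns \eqref{Ex:dir}/\eqref{Ex:neu} into $-\Delta_p^H v=\widehat f(x)$ with $|\widehat f|\le e^{\kappa\|u\|_\infty/(p-1)}f$; then it applies Theorem~\ref{thm:conv} directly to $v$. All approximation of the domain, the operator, and $f$ takes place \emph{inside} the proof of Theorem~\ref{thm:conv}, where the auxiliary problems have unique solutions by Theorem~\ref{existence}. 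Your plan instead regularizes at the level of the natural-growth problem and solves on $\Omega_k$, which creates three obstacles the paper's route avoids: (i) existence of solutions to natural-growth problems on $\Omega_k$ is not supplied here and is nontrivial; (ii) the assertion that $u_{k,\varepsilon}$ is ``bounded uniformly in terms of $\|u\|_{L^\infty(\Omega)}$'' is unjustified, since $u_{k,\varepsilon}$ and $u$ are unrelated solutions to different problems on different domains, and without an a priori $L^\infty$ bound the exponential in \eqref{stima:ex1} is uncontrolled; (iii) equations with natural gradient growth lack uniqueness, so your compactness-plus-monotonicity argument cannot identify the limit with the \emph{given} solution $u$. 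Two smaller remarks: the paper approximates $\Omega$ from outside, not inside, and the boundary term is not produced by a $P$-function argument but by the divergence identity of Lemma~\ref{quick:form-new} applied on super-level sets $\{H(\nabla u_\varepsilon)>t\}$; the sign check for convex $\Omega$ is precisely \eqref{feb315}.
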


Thanks to an ad hoc substitution, the original equation \eqref{equation} can be turned into an equation with a right-hand side independent of the gradient of the new unknown, and, thanks the assumption \eqref{g}, enjoying the same integrability properties as the function $f$. Approaching the new problem requires approximating the differential operator. Like this new problem, the approximating problems 
contain   right-hand sides only depending on $x$, but involve
more general elliptic operators driven by an   anisotropic nonlinearity of  Orlicz type. The Lipschitz continuity of solutions to anisotropic  problems in a general Orlicz setting is a question of independent  interest and is the content of the separate statements of Theorems \ref{thm:dirneu} -- \ref{thm:conv}. 
  These theorems extend earlier results from \cite{cia11} dealing with isotropic equations. The anisotropy of the problems under consideration calls for new ad hoc integral identities and inequalities involving 
solutions. In particular, they involve an anisotropic second fundamental form on the boundary associated with the norm $H$.

\section{Background}\label{sec:prelim0}

\subsection{Young functions}
A Young function $B:[0,\infty)\to [0,\infty]$ is a convex function vanishing at $0$. Hence, 
\begin{equation}\label{B}
B(t) = \int_0^t b(s) \,ds \qquad \text{for $t\geq 0$,}
\end{equation}
for some non-decreasing function $b:  [0, \infty) \to [0, \infty]$.
\\
Real-valued Young functions $B\in C^2(0,\infty)$ with a nonlinear growth have a role in our discussion. Specifically, on setting
\begin{equation}\label{dic102}
    i_b=\inf_{t>0}\frac{t\,b'(t)}{b(t)}\quad\text{and }\quad s_b=\sup_{t>0}\frac{t\,b'(t)}{b(t)},
\end{equation}
the nonlinear growth condition on the function $B$ is imposed by requiring that 
\begin{equation}\label{ib}
i_b>0.
\end{equation}
The property \eqref{ib} is equivalent to the so-called $\nabla_2$-condition in the theory of Young functions. The $\Delta_2$-condition is also required on $B$, and is equivalent to 
\begin{equation}\label{sb}
s_b<\infty.
\end{equation}
 By the assumption \eqref{ib},
\begin{equation}\label{limb}
\lim _{t\to 0^+}b(t)=0.
\end{equation}
The monotonicity of the function $b$ ensures that
\begin{equation} \label{Bb}
\tfrac t2 b(\tfrac t2)\leq B(t)  \leq b(t) t \quad \text{for $t \geq 0$.}
\end{equation}
Also, as a consequence of the assumption \eqref{dic102},   the functions $\frac{b(t)}{t^{i_b}}$ and $\frac{b(t)}{t^{s_b}}$
are non-decreasing and non-increasing, respectively.
Hence,
\begin{equation}\label{usual:b}
    b(1)\,\min\{t^{i_b},t^{s_b}\}\leq b(t)\leq b(1)\,\max\{t^{i_b},t^{s_b}\}\quad \text{for $t\geq 0$,}
\end{equation}
and there exist positive constants $c$ and $C$, depending only on $i_b$ and $s_b$, such that
\begin{equation}\label{usual:b1}
    c\,b(s)\leq b(t)\leq C\,b(s) \quad \text{if $0\leq s\leq t\leq 2s$.}
\end{equation}
Now, define the function $a: (0, \infty) \to [0, \infty)$ as
 $$a(t) = \frac{b(t)}t \qquad \text{for $t>0$.}$$
Thereby, $a\in C^1(0, \infty)$. Furthermore, if $i_a$ and $s_a$ are defined in analogy to \eqref{dic102}, then
%
\begin{equation}\label{dic105}
\text{$i_b= i_a+1$\quad  and \quad $s_b=s_a+1$.}
\end{equation}
Hence, the assumptions \eqref{ib} and \eqref{sb} are equivalent to 
\begin{equation}\label{c:a}
    -1<i_a\leq s_a<\infty\,,
\end{equation}
and a counterpart of \eqref{usual:b1} holds;  namely,
\begin{equation}\label{usual:a1}
    c\,a(s)\leq a(t)\leq C\,a(s)\quad\text{if $0<s\leq t\leq 2s$.}
\end{equation}
Also 
\begin{equation}\label{jan201}
\text{$i_B\geq i_b+1$ \quad and \quad  $s_B\leq s_b +1$.}
\end{equation}
Thus, Equations \eqref{ib}-\eqref{sb} imply that $i_B>1$ and $s_B< \infty$, respectively.
\\ The assumption \eqref{sb} also ensures 
 that for every $k >0$ there exists  a constant $c$, depending only on $k$ and $s_b$, such that
\begin{equation}\label{bdelta2}
b(k t) \leq c b(t) \qquad \hbox{for $t \geq 0$.}
\end{equation}
Similarly, the fact that $s_B< \infty$ ensures that for every $k>0$ there exists  a constant $c$, depending only on $k$ and $s_B$, such that
\begin{equation}\label{Bdelta2}
B(k t) \leq c B(t) \qquad \hbox{for $t \geq 0$.}
\end{equation}
The Young conjugate $\widetilde{B}$ of $B$ is the Young function defined 
 as
\begin{equation}
    \widetilde{B}(t)=\sup\{st-B(s):\,s\geq 0 \} \qquad \text{for $t \geq 0$}.
\end{equation}
Since $i_B>1$, a  property parallel to \eqref{Bdelta2} holds for $\widetilde B$. Namely,
for every $k >0$ there exists  a constant $c$, depending only on $k$ and $i_B$, such that
\begin{equation}\label{Btildedelta2}
\widetilde B(k t) \leq c \widetilde B(t) \qquad \hbox{for $t \geq 0$.}
\end{equation}
The property  $s_B< \infty$ also implies that there exists a constant $c$ such that
\begin{equation}\label{feb1}
\widetilde B(b(t)) \leq c B(t)  \qquad \hbox{for $t \geq 0$.}
\end{equation}
Let $\beta, \gamma, F:[0,\infty)\to [0,\infty)$ be the functions defined as
\begin{equation}\label{oct101}
   \beta(t)=b(t)\,t\,,\quad \gamma(t)=b^{-1}(t)\,t\,,   \quad F(t)=\int_0^t b^2(s)\,ds\,.
\end{equation}
Then,
\begin{equation}\label{b1}
    c_1\,B(t)\leq \beta(t)\leq c_2\,B(t)\,,\quad  c_1\,\widetilde{B}(t)\leq \gamma(t)\leq c_2\,\widetilde{B}(t)\,,
\end{equation}
 and
\begin{equation}\label{b2}
    F(t)\leq t\,b^2(t)\leq c_3\,F(t)\,.
\end{equation}
for some positive constants $c_1, c_2$ and $c_2$ depending on $i_a,s_a$ (see e.g. \cite[Proposition 2.15]{cia11}).
\\ Let  $a$ be defined as above and let
 $\e\in (0,1)$. Define the function  $a_\e(t): [0, \infty) \to [0, \infty)$ as
\begin{equation}\label{a_e}
    a_\e(t)=\frac{{a}(\sqrt{\e+t^2})+\e}{1+\e\,{a}(\sqrt{\e+t^2})} \quad \text{for $t\geq 0$,}
\end{equation}
the function $ b_\e(t): [0, \infty) \to [0, \infty)$ as
\begin{equation}\label{b_e}
    b_\e(t)=a_\e(t) t \quad \text{for $t\geq 0$,}
\end{equation}
and the Young function $B_\e$ as
\begin{equation}\label{B_e}
    B_\e(t)=\int_0^t b_\e(s)\,ds\,.
\end{equation}
 Then,  $ a_\e\in C^1([0,\infty))$,
\begin{equation}\label{appr:1}
  \e\leq a_\e(t)\leq \e^{-1}\quad\text{for }t\geq 0,
\end{equation}
and
\begin{equation}\label{appr:2}
    \min\{i_a,0\}\leq i_{a_\e}\leq s_{a_\e}\leq \max\{s_a,0\}.
\end{equation}
Moreover,   given any  $M>0$,
\begin{equation}\label{cortona10}
     \lim\limits_{\e\to 0^+}b_\e (t)=b(t) 
\end{equation}
uniformly in $[0,M]$-- see \cite[Proof of Lemma 4.5]{cia14}.

\subsection{The functions $H$ and $\mathcal A$}\label{sec:tutto}
Let $H$ be a norm in $\rn$  such that $H^2\in C^2(\rn\setminus \{0\})$ and fulfilling 
\eqref{ell:H}.  Then,
\begin{equation}\label{reg:H}
   H\in C^1(\R^n)\,,\quad\text{and}\quad H^2\in C^{1,1}(\R^n)\,.
\end{equation}
Moreover,
\begin{equation}\label{bounds:H}
    \l\,|\xi|^2\leq H^2(\xi)\leq \L\,|\xi|^2\quad  \text{for $\xi\in \rn$,}
\end{equation}
and
\begin{equation} \label{bound:nabH1}
\sqrt{\lambda} \leq |\nabla_\xi H(\xi)| \leq \sqrt{\Lambda}  \quad \text{for $\xi\neq 0$.}
\end{equation}
Here $\nabla_\xi H$ denotes the differential of $H$ with respect to its original variable $\xi\in \R^n\setminus\{0\}$. Let 
$\A : \rn \to \rn$ be the vector field  defined as 
\begin{equation} \label{def_A}
    \A(\xi)=\nabla_\xi B\big( H(\nabla u)\big)=\begin{cases} b\big(H(\xi)\big) \nabla_\xi H(\xi) &\quad \text{if $\xi \neq 0$}
\\ 0  &\quad \text{if $\xi = 0$.}
\end{cases}
\end{equation}
The inequality 
\eqref{bound:nabH1} yields
\begin{equation}\label{A:bds_sopra}
\sqrt{\l}\, b(H(\xi)) \leq |\A(\xi)|\leq \sqrt{\L}\, b(H(\xi)) \quad \text{for $\xi \in \rn$.}
\end{equation}       
Notice that    the function $\mathcal A$ admits the alternate expression 
\begin{equation} \label{def_Abis}
    \A(\xi) = a\big(H(\xi)\big)\tfrac{1}{2}\nabla_\xi H^2(\xi) \quad \text{for $\xi \neq 0$.}
\end{equation}
Thanks to \eqref{Bdelta2},
%
%
%
%
the inequalities  \eqref{bounds:H} imply that
\begin{equation}\label{B:equiv}
     c  B(|\xi|)  \leq B(H(\xi))\leq C  B(|\xi|) \quad \text{for $\xi \in \rn$,}
\end{equation}
for suitable constants $c=c(s_B, \l,\L)$ and $C=C(s_B,\l,\L)$,
and
\begin{equation}\label{b:equiv}
     c  b(|\xi|)  \leq b(H(\xi))\leq   C b(|\xi|) \quad \text{for $\xi \in \rn$,}
\end{equation}
for suitable constants $c=c(s_b, \l,\L)$ and $C=C(s_b,\l,\L)$. 
\\
From the homogeneity of $H^2$, we deduce that
\begin{equation*} 
        \A(\xi)\cdot\xi = a(H(\xi)) H^2(\xi) = b(H(\xi))H(\xi) \quad \text{for $\xi \in \rn$.}
\end{equation*}
Coupling the latter equation with the inequality \eqref{Bb} yields
\begin{equation}\label{A:bds}
        \A(\xi)\cdot\xi  \geq B\big(H(\xi)\big) \quad \text{for $\xi \in \rn$.}
\end{equation}
One has that
\begin{equation}\label{monot:A}
    \big(\A(\xi)-\A(\eta)\big)\cdot(\xi-\eta)>0 \quad \text{for  $\xi, \eta \in \rn$, with $\xi\neq \eta$.}
\end{equation}
Observe that $\nabla_\xi \A(\xi)$ is the symmetric matrix given by
$$\nabla_\xi \A(\xi)=
\left(\frac{\partial \A^i(\xi)}{\partial\xi_j}\right)_{i,j=1,\ldots,n}  \quad \text{for $\xi \neq 0$.}
$$
Moreover,
\begin{equation}\label{mon:updown}
\l\,\min\{1,i_b\}\,a\big(H(\xi)\big)\,|\eta|^2 \leq \nabla_\xi\A(\xi)\, \eta \cdot \eta
 \leq \L\,\max\{1,s_b\}\,a\big(H(\xi)\big)\,|\eta|^2\,,
\end{equation}
for $\xi\neq 0$ and $\eta\in \rn$ -- see \cite[Lemma 3.2]{accfm}.
\\
Let $\e\in (0,1)$ and let $b_\e$ be function given by \eqref{b_e}. We define
 the vector field $\A_\e: \rn \to \rn$ as
\begin{equation}\label{def:Ae'}
    \A_\e(\xi)=\nabla_\xi B_\e\big( H(\xi)\big)=\begin{cases} b_\e\big(H(\xi)\big)\nabla_\xi H(\xi) & \quad \text{if $\xi \neq0$}
\\ 0 & \quad \text{if $\xi =0$.}
\end{cases}
\end{equation}
Notice the alternative formula
\begin{equation}\label{def:Ae}
    \A_\e(\xi) = a_\e\big(H(\xi)\big)\tfrac{1}{2}\nabla_\xi H^2(\xi) \quad \text{if $\xi \neq 0$.}
\end{equation}
Given any $M>0$, one has that
\begin{equation}\label{appr:3}
    \lim\limits_{\e\to 0^+}\A_\e(\xi)=\A(\xi) 
\end{equation}
uniformly in $\{\xi\in\rn\,:\,|\xi|\leq M\}$.
\\
Analogously to \eqref{mon:updown}, 
\begin{equation}\label{pr:ae}
    \l\,\min\{1,i_b\}\, a_\e\big(H(\xi)\big)\,|\eta|^2 \leq \nabla_\xi \A_\e(\xi)\, \eta \cdot \eta
\leq \L\,\max\{1,s_b\}\, a_\e\big(H(\xi)\big)\,|\eta|^2
\end{equation}
for $\xi\neq 0$ and $\eta\in \rn$. 
In particular,  
\begin{equation}\label{est:Ae}
   \e\,\l\,\min\{1,i_{b}\}\,\mathrm{Id} \leq \nabla_\xi \A_\e(\xi)
\leq\e^{-1}\,\L\,\max\{1,s_{b}\}\,\mathrm{Id} \quad \text{for $\xi \neq 0$.}
\end{equation}
Hence, the function $\A_\e$ is Lipschitz continuous.

\subsection{Function spaces}
Let $(\mathcal{R},m)$ be a  $\sigma$-finite non-atomic measure space. We denote by $\mathcal M(\mathcal{R})$ the space of real-valued measurable functions on $\mathcal{R}$.\\
Given
a Young function $B$, the Orlicz space $L^B(\mathcal{R})$ is the Banach space of functions $\phi \in \mathcal M(\mathcal{R})$ whose Luxemburg norm
\begin{equation*}
    \|\phi\|_{L^B(\mathcal{R})}=\inf\Big\{t>0\,:\:\int_\mathcal{R} B\Big( \frac{|\phi|}{t}\Big)\,dm\leq 1\Big\}
\end{equation*}
is finite. 
The classical Lebesgue space $L^p(\mathcal{R})$, with $p\geq 1$, is reproduced with the choice $B(t)=t^p$.
\\ The Lorentz spaces can be defined via rearrangements of functions.
Let $\phi \in \mathcal M (\mathcal{R})$.
The distribution function $\mu_\phi\:[0,\infty)\to [0,m(\mathcal{R})]$ of a  $\phi$ 
is given by
\begin{equation}
    \mu_\phi(t)=m\big(|\big\{ x\in \mathcal{R}\,:\,|\phi(x)|>t\big\}\big) \quad \text{for $t \geq 0$.}
\end{equation}
The decreasing rearrangement $\phi^*:[0,\infty)\to [0,\infty]$ of $\phi$ 
is defined as
\begin{equation}
    \phi^*(s)=\inf\big\{t\geq 0\,:\,\mu_\phi(t)\leq s \big\} \quad \text{for $s\geq 0$,}
\end{equation}
and it is the unique non-increasing right-continuous function on $[0,\infty)$ which is equidistributed with $\phi$. 
\\ The Hardy-Littlewood inequality \cite[Section 7.2]{pick} tells that
\begin{equation}\label{HL}
    \int_{\mathcal{R}}|\phi(x)\,\psi(x)|\,dm\leq \int_0^\infty \phi^*(s)\,\psi^*(s)\,ds
\end{equation}
for every $\phi, \psi \in \mathcal M(\mathcal{R})$.
\\
The maximal function $\phi^{**}: (0, \infty) \to [0,\infty]$ associated with $\phi^*$ is given by 
\begin{equation}
    \phi^{**}(t)=\frac{1}{t}\int_0^t \phi^*(s)\,ds\,\quad \text{for $t>0$.}
\end{equation}
One has that
\begin{equation}\label{sub:add}
     (\phi+\psi)^{**}\leq  \phi^{**}+\psi^{**}.
\end{equation}
  Moreover, 
\begin{align}
    \label{oct100}
\text{if \,\,$\phi^{**}\leq \psi^{**}$, \,\, then \,\, $\|\phi\|_{L^p(\mathcal R)}\leq \|\psi\|_{L^p(\mathcal R)}$,}
\end{align}
for $\phi, \psi \in \mathcal M(\mathcal{R})$ and $p\in [1, \infty]$.
\\ A property of rearrangements in connection with convolutions guarantees that, if $\mathcal R = \rn$ equipped with the Lebesgue measure and $\rho_k$ is a sequence of standard radially decreasing mollifiers, then
\begin{equation}\label{convol:star1}
  (\phi\ast\rho_k)^{**}(r)\leq c(n)\,\phi^{**}(r)\quad\text{for $r>0$,}
\end{equation}
for $\phi\in L^1_{loc}(\rn)$ -- see e.g. \cite[Lemma 1.8.7]{ziemer}.
\\
For $q\in [1,\infty)$ and $\sigma\in (0,\infty)$, the Lorentz spaces $L^{q,\sigma}(\mathcal{R})$ and $L^{(q,\sigma)}(\mathcal{R})$ are the sets of measurable functions $\phi$ such that the quantities
\begin{equation}\label{lore:norm}
    \|\phi\|_{L^{q,\sigma}(\mathcal{R})}=|| s^{\frac{1}{q}-\frac{1}{\sigma}}\phi^*(s)\|_{L^\sigma(0,\infty)}\,,
\end{equation}
and 
\begin{equation}\label{lore:norm1}
    \|\phi\|_{L^{(q,\sigma)}(\mathcal{R})}=|| s^{\frac{1}{q}-\frac{1}{\sigma}}\phi^{**}(s)\|_{L^\sigma(0,\infty)}\,,
\end{equation}
are respectively finite.  If $\sigma\in [1,\infty)$, then $L^{q,\sigma}(\mathcal{R})$ is a Banach space. Moreover, if $q\in (1,\infty)$, we have 
\begin{equation}\label{lor:equiv}
 \|\phi\|_{L^{q,\sigma}(\mathcal{R})}\leq \|\phi \|_{L^{(q,\sigma)}(\mathcal R)}\leq c(q,\sigma)\,\|\phi\|_{L^{q,\sigma}(\mathcal{R})}.
\end{equation}
Hence, replacing $\phi^*$ with $\phi^{**}$ in \eqref{lore:norm} results in an equivalent quantity, up to multiplicative constants independent of $\phi$-- see \cite[Proposition 8.1.8]{pick}.
\\
On the other hand, if $m(\mathcal{R})<\infty$, then 
\begin{equation}
    L^{(1,1)}(\mathcal{R})=L\,\log L(\mathcal{R})\,,
\end{equation}
up to equivalent norms, where $L\,\log L(\mathcal{R})$ denotes the  Orlicz space  associated with the Young function $A(t)=t\,\log_+ t$, see \cite[Chapter 4, Lemma 6.2]{bennett} or \cite[Remark 8.1.10]{pick}. We also set
\begin{equation*}
    (L^2)^{(1,\frac{1}{2})}(\mathcal{R})=
    \{\phi\in \mathcal M(\mathcal{R}): \phi^2 \in L^{(1, \frac 12)}(\mathcal{R})\}
    %
    %
\end{equation*}
and define   the functional
\begin{equation*}
    \|\phi\|_{(L^2)^{(1,\frac{1}{2})}(\mathcal{R})}
    =\|\phi^2\|^{1/2}_{L^{(1,\frac{1}{2})}(\mathcal{R})}
\end{equation*}
for $\phi \in \mathcal M (\mathcal R)$.
\\
Assume now that $\Omega$ is a open set in $\rn$ equipped with the Lebesgue measure, and 
 $k,m\in \N$. Then the Lorentz-Sobolev space $W^mL^{q,\sigma}(\Omega)$ is defined as
\begin{align}\label{LS}
    W^mL^{q,\sigma}(\Omega)=\Big\{ u\in L^{q,\sigma}(\Omega):&\, \text{$v$ is $m$-times  weakly differentiable }
    \\  
   & \text{and $|\nabla^j u|\in L^{q,\sigma}(\Omega)$ for $j=1,\dots,m$}\Big\}\,.\nonumber
\end{align}
Here $\nabla^j u$ denotes the $j$-th order distributional gradient of $u$.
\\
Similarly, given a Young function $B$,
the Orlicz-Sobolev space $W^{1,B}(\Omega)$ is defined as the Banach space 
\begin{equation}\label{OS}
    W^{1,B}(\Omega)=\Big\{u\in L^B(\Omega)\,:\,\nabla u\in L^B(\Omega)\Big\}\,,
\end{equation}
endowed with norm $\|u\|_{W^{1,B}(\Omega)}=\|u\|_{L^{B}(\Omega)}+\|\nabla u\|_{L^B(\Omega)}$.  We also define its subspaces
\begin{equation}
     W^{1,B}_0(\Omega)=\Big\{u\in W^{1,B}(\Omega)\,:\,\text{the continuation of $u$ by 0 outside $\Omega$ belongs to $W^{1,B}(\rn)$}\Big\}\,,
\end{equation}
and
\begin{equation}
    W^{1,B}_\perp (\Omega) =\Big\{u\in W^{1,B}(\Omega)\,:\,\int_\Omega u\,dx=0\Big\}\,.
\end{equation}

\subsection{Classes of domains} 


An open set $\Omega$ in $\rn$ is called a Lipschitz domain  if 
 there exist  constants $L_\Om>0$ and $R_\Om \in (0, 1)$ 
such that, for every $x_0\in \partial \Om$ and $R\in (0, R_\Om]$ there exist an orthogonal coordinate system centered at $0\in\rn$ and an $L_\Om$-Lipschitz continuous function 
$\varphi : B'_{R}\to (-\ell, \ell)$, where $B'_{R}$ denotes the ball in $\mathbb R^{n-1}$, centered at $0'\in \R^{n-1}$ and with radius $R$, and
\begin{equation}\label{ell}
\ell = R (1+L_\Om),
\end{equation}
satisfying
\begin{align}\label{may100}
    &\partial \Om \cap \big(B'_{R}\times (-\ell,\ell)\big)=\{(x', \varphi (x'))\,:\,x'\in B'_{R}\},
    \\
    & \Om \cap \big(B'_{R}\times (-\ell,\ell)\big)=\{(x',x_n)\,:\,x'\in B'_{R}\,,\,\varphi (x')<x_n<\ell\}.\nonumber
\end{align}
We set
\begin{equation}\label{may101}
\mathfrak L_\Om = (L_\Om, R_\Om),
\end{equation}
and call $\mathfrak L_\Om$ a Lipschitz characteristic of $\Om$. 
\\
 Note that, in general, a  Lipschitz characteristic $\mathfrak{L}_\Om = (L_\Om, R_\Om)$ is not   uniquely determined. For instance, if $\partial \Omega \in C^1$, then  $L_\Om$ may be taken arbitrarily small, provided that  $R_\Om$ is chosen sufficiently small. 
\\
Let $\Omega$ be a Lipschitz domain as above and let $X(B_R')$ be a function space on $B_R'$. 
We say that $\Omega$ is of class $W^2X$,  and we write $\partial \Omega\in W^2X$, if the function $\varphi$ also belongs to the Sobolev type space $W^2X(B'_R)$. In analogy with \eqref{LS}
and \eqref{OS}, this means that $\varphi$ is a twice weakly differentiable function which belongs, together with its first and second-order weak derivatives, to $X(B'_R)$.

\subsection{Geometric and functional inequalities}

Assume that $\Omega$ is a bounded Lipschitz domain. Then, there exists a constant $c=c(n, d_\Omega, \mathcal{L}_\Omega)$ such that
\begin{equation}\label{rel:isoper1}
      \H^{n-1}\big(\partial^e E\cap \partial \Omega)\leq c\,\H^{n-1}(\partial^e E\cap \Omega),
  \end{equation}
for every set $E\subset \Omega$ of finite perimeter such that $|E|\leq |\Omega|/2$. Here, $\partial^e E$ denotes the essential boundary of $E$ in the geometric measure theoretical sense.
The inequality \eqref{rel:isoper1} follows, for instance, via \cite[Remark 5.10.2]{ziemer} and a covering argument for $\Omega$
by cylinders as in Equation \eqref{may100} of the definition of Lipschitz domain.
\\ The inequality \eqref{rel:isoper1} implies the trace inequality 
\begin{equation}\label{trace:leoni}
    \|\mathrm{Tr}(u)\|_{L^1(\partial \Omega)}\leq c(n,d_\Omega,\mathcal{L}_\Omega)\,\|u\|_{BV(\Omega)}\,,\quad u\in BV(\Omega),
\end{equation}
for some constant $c=c(n, d_\Omega, \mathcal{L}_\Omega)$ and every $u\in BV(\Omega)$. 
Here, $\mathrm{Tr}$ denotes the trace operator on $\partial \Omega$.
 \\ The relative isoperimetric inequality on $\Omega$ can also be deduced from  \eqref{rel:isoper1} and 
 tells us that
\begin{equation}\label{rel:isoper}
    |E|^{\frac{n-1}{n}}\leq c\,\H^{n-1}\big(\partial^e E\cap \Omega\big)\,,
\end{equation}
for  some constant $c=c(n, d_\Omega, \mathcal{L}_\Omega)$
and every set $E\subset \Omega$ of finite perimeter such that $|E|\leq |\Omega|/2$.
\\ The inequality \eqref{rel:isoper} in turn yields the Sobolev-Poincar\'e inequality 
\begin{equation}\label{poincare}
        \|u-u_\Omega\|_{L^{n'}(\Omega)}\leq c\,\|\nabla u\|(\Omega)\,,
    \end{equation}
    for every $u\in BV(\Omega)$, where again $c=c(n, d_\Omega, \mathcal{L}_\Omega)$.
\\
 Now, let $\Omega$ be any open set in $\rn$. Assume that $u\in W^{1,1}(\Omega)$ and $g:\Omega \to [0,\infty]$ is a  Borel function. Then
 the  coarea formula for Sobolev functions asserts that
\begin{equation}\label{coarea}
    \int_{\Omega}g\,|\nabla u|\,dx=\int_{\R}\int_{\{u=s\}}g\,d\H^{n-1}\,ds\,,
\end{equation}
 provided that a suitable precise representative of the function $u$ is chosen
 \cite[Proposition 2.1]{br:ziemer}.
 In particular
\begin{equation}\label{coarea:1}
    \int_{\{u>t\}}g\,|\nabla u|\,dx=\int_t^\infty\int_{\{v=s\}}g\,d\H^{n-1}\,ds\quad \text{for $t \in \mathbb R$}.
\end{equation}
The choice $g=\chi_{\{|\nabla v|=0\}}$ yields: 
\begin{equation}\label{t:hn1zero}
\H^{n-1}\big(\{|\nabla v|=0\}\cap \{v=t \}\big)=0\quad\text{for a.e. $t\in \R$.}  
\end{equation}
Thus, setting $g= \chi_{\{v=t\}}/|\nabla v|$ for all values 
 of $t$ which make \eqref{t:hn1zero} true, shows that
\begin{equation}\label{t:lebzero}
    |\{ v=t\}|=0\quad\text{for a.e. $t\in \R$.}
\end{equation}
If $\Omega$ is a bounded Lipschitz domain, then the coarea formula \eqref{coarea} and the 
 relative isoperimetric inequality 
\eqref{rel:isoper} enable one to deduce that
\begin{equation}\label{dis:distrib}
        1\leq c\,(-\mu'_u(t))^{1/2}\,\mu_u(t)^{-1/n'}\,\Big(-\frac{d}{dt}\int_{\{u>t\}}|\nabla u|^2 dx \Big)^{1/2}\quad \text{for a.e. $t\geq u^*(|\Omega|/2)$,}
    \end{equation}
    for some constant $c=c(n,d_\Omega,\mathcal{L}_\Omega)$ -- see \cite{Mazya69}.

\section{Boundary value problems with Orlicz growth}

As mentioned above, handling the convection term depending on $\nabla u$ in the equation \eqref{equation} requires a change of  the unknown variable $u$. This reduces our task to dealing with an equation with a right-hand side only depending on $x$. The  analysis of the regularity of solutions to the boundary value problems associated with the new equation yet relies upon approximation arguments at various levels. In particular, approximating the differential operator leads us to deal with elliptic equations whose growth is not anymore of plain power type. We begin with results parallel to those of Theorems \ref{thm:example1} and \ref{thm:example} for boundary value problems for anisotropic elliptic equations involving Orlicz type nonlinearities. They are general enough to be applied in the proof of \ref{thm:example1} and \ref{thm:example} outlined above, and have an independent interest. In particular, they extend results from  \cite{cia11}, where isotropic operators associated with the Euclidean norm $H(\xi)=|\xi|$ where considered.

The problems in question 
 arise as Euler-Lagrange equations of functionals of the form
\begin{equation}\label{funct_H}
J_H(u)= \int_\Omega B(H(\nabla u)) \,dx - \int_\Omega f u  \, dx\,,
\end{equation} 
where $B$ is a Young function \eqref{B} satisfying \eqref{ib}--\eqref{sb}.
The relevant Euler-Lagrange equation reads:
\begin{equation} \label{eq_anisotr_intro}
    -\mathrm{div}\big(\A(\nabla u)\big)= f \quad\text{in } \Omega \,,
\end{equation} 
where the function $\A : \rn \to \rn$ is defined by \eqref{def_A}. Clearly, the differential operator in \eqref{eq_anisotr_intro} reproduces \eqref{def:anpl}
with the choice $B(t)= \frac 1p t^p$ for  $p>1$.
\\
The Dirichlet problem for the equation \eqref{eq_anisotr_intro} takes the form \begin{equation}\label{eq:dir2}
\begin{cases}
        -\mathrm{div}\big(\A(\nabla u) \big)=f &\quad\text{in } \Omega
        \\
        u=0 &\quad\text{on }\partial \Omega.
\end{cases}
\end{equation}
Recall that a weak solution to this problem is a function $u\in W^{1,B}_0(\Omega)$ such that
\begin{equation}\label{weak:dir}
    \int_\Omega\A(\nabla u)\cdot \nabla \vphi\,dx=\int_\Omega f\,\vphi\,dx
\end{equation}
for every  $\vphi\in W^{1,B}_0(\Omega)$.
\\ The (co-normal) Neumann
problem for \eqref{eq_anisotr_intro} reads
\begin{equation}\label{eq:neu2}
\begin{cases}
        -\mathrm{div}\big(\A(\nabla u) \big)=f &\quad\text{in } \Omega
        \\
        \A(\nabla u) \cdot \nu =0 &\quad\text{on }\partial \Omega.
\end{cases}
\end{equation}
Under  the compatibility condition
\begin{equation}\label{mean:f0}
    \int_\Omega f\,dx=0,
\end{equation}
a weak solution to the 
 problem \eqref{eq:neu2} 
 is a function $u\in W^{1,B}(\Omega)$ such that
\begin{equation}\label{weak:neu}
    \int_\Omega\A(\nabla u)\cdot \nabla \vphi\,dx=\int_\Omega f\,\vphi\,dx
\end{equation}
for every $\vphi\in W^{1,B}(\Omega)$.
\\ Of course, the above definition require that the right-hand sides of the equations \eqref{weak:dir}
and \eqref{weak:neu}
be well defined for every trial function $\varphi$. A sharp integrability condition on $f$, depending on $B$ and $n$ through a sharp Sobolev type embedding in Orlicz spaces is available-- see \cite{ACCZ}. In our discussion, we assume that $f\in X_n(\Omega)$. As the latter space is contained in $L^n(\Omega)$ and $ W^{1,B}(\Omega)\subset W^{1,1}(\Omega)$ for every Young  function $B$,  the integrals involving $f$ in \eqref{weak:dir}
and \eqref{weak:neu} are certainly convergent, thanks to H\"older's inequality and the Sobolev inequality in $W^{1,1}(\Omega)$.

In the following statement we collect an existence and uniqueness result, and  a basic energy estimate, for weak solutions to the problems \eqref{eq:dir2} and \eqref{eq:neu2} with $f \in L^n(\Omega)$.

\begin{theorem}\label{existence}
    Let $\Omega$ be a bounded open set  in $\R^n$ and let $f\in L^n(\Omega)$. Assume that $H^2\in C^2(\R^n\setminus\{0\})$ and fulfills \eqref{ell:H}, and that $a\in C^1(0,\infty)$ and satisfies \eqref{c:a}.
    \\
    (i) There exists a unique weak solution $u\in W^{1,B}_0(\Omega)$ to the Dirichlet problem \eqref{eq:dir2}. Moreover, 
\begin{equation}\label{stima:energiadir}
        \int_\Omega B\big( H(\nabla u)\big)\,dx\leq c\,\widetilde{B}(\|f\|_{L^n(\Omega)})
    \end{equation}
    for some constant $c=c(n,\l,\L, i_a,s_a,|\Omega|)$.
    \\
    (ii) Assume, in addition, that $\Omega$ is a Lipschitz domain and $f$ satisfies the compatibility condition \eqref{mean:f0}. Then there exists a unique weak solution $u\in W^{1,B}_\perp(\Omega)$ to the Neumann problem \eqref{eq:neu2}. Moreover, 
\begin{equation}\label{stima:energianeu}
        \int_\Omega B\big( H(\nabla u)\big)\,dx\leq c\,\widetilde{B}(\|f\|_{L^n(\Omega)})
    \end{equation}
    for some constant $c=c(n,\l,\L, i_a,s_a, d_\Omega, \mathcal L_\Om)$.
\end{theorem}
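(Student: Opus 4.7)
The natural strategy is the direct method of the calculus of variations, applied to the functional $J_H$ in \eqref{funct_H} on $W^{1,B}_0(\Omega)$ for part (i), and on $W^{1,B}_\perp(\Omega)$ for part (ii). Since the assumption \eqref{c:a} is equivalent to both $B$ and its Young conjugate $\widetilde B$ satisfying the $\Delta_2$-condition, both function spaces are reflexive and separable, and the modular and Luxemburg topologies agree. The functional $J_H$ is convex (strictly, owing to \eqref{ell:H} and $i_b>0$), continuous, and hence weakly lower semicontinuous. The decisive point is coercivity: by H\"older's inequality, the Sobolev embedding $W^{1,1}_0(\Omega)\hookrightarrow L^{n'}(\Omega)$, and the lower estimate in \eqref{bounds:H},
\begin{equation*}
\Big|\int_\Omega fu\,dx\Big|\leq \|f\|_{L^n(\Omega)}\|u\|_{L^{n'}(\Omega)}\leq c\,\|f\|_{L^n(\Omega)}\int_\Omega|\nabla u|\,dx \leq c'\,\|f\|_{L^n(\Omega)}\int_\Omega H(\nabla u)\,dx,
\end{equation*}
and then Young's inequality $st\leq B(s)+\widetilde B(t)$ applied with $s=\varepsilon H(\nabla u)$ and $t=c'\|f\|_{L^n(\Omega)}/\varepsilon$, combined with the convexity of $B$ (to pass from $B(\varepsilon\,\cdot)$ to $\varepsilon B(\cdot)$) and with the $\Delta_2$-property of $\widetilde B$, yields, for a suitably small $\varepsilon$,
\begin{equation*}
\Big|\int_\Omega fu\,dx\Big|\leq \tfrac{1}{2}\int_\Omega B(H(\nabla u))\,dx+c\,\widetilde B(\|f\|_{L^n(\Omega)}).
\end{equation*}
A minimizer of $J_H$ thus exists, and the Euler--Lagrange equation shows it is a weak solution of \eqref{eq:dir2}.

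Uniqueness follows from the strict monotonicity \eqref{monot:A}: if $u_1,u_2$ are two weak solutions belonging to the appropriate space, testing the difference of \eqref{weak:dir} (respectively \eqref{weak:neu}) with the admissible function $u_1-u_2$ gives $\int_\Omega(\mathcal A(\nabla u_1)-\mathcal A(\nabla u_2))\cdot\nabla(u_1-u_2)\,dx=0$, whence $\nabla u_1=\nabla u_2$ a.e.\ and $u_1=u_2$ by the relevant Poincar\'e inequality. The energy estimates \eqref{stima:energiadir} and \eqref{stima:energianeu} are then obtained by testing the weak formulation with $\varphi=u$ itself: the lower bound \eqref{A:bds} gives
\begin{equation*}
\int_\Omega B(H(\nabla u))\,dx\leq \int_\Omega \mathcal A(\nabla u)\cdot\nabla u\,dx=\int_\Omega fu\,dx,
\end{equation*}
and the coercivity chain above, applied to absorb $\tfrac12\int_\Omega B(H(\nabla u))\,dx$ into the left-hand side, produces the required bound in terms of $\widetilde B(\|f\|_{L^n(\Omega)})$.

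For part (ii), the Neumann adaptation is structurally identical, the only change being that the Sobolev embedding $W^{1,1}_0(\Omega)\hookrightarrow L^{n'}(\Omega)$ is replaced by the mean-zero Sobolev--Poincar\'e inequality \eqref{poincare} on $W^{1,B}_\perp(\Omega)$, which is available precisely because $\Omega$ is Lipschitz and which accounts for the dependence of the constants on $d_\Omega$ and $\mathcal L_\Omega$. The compatibility condition \eqref{mean:f0} ensures that $J_H$ passes to the quotient by constants, so that the direct method applies on $W^{1,B}_\perp(\Omega)$, and that $\int_\Omega fu\,dx$ is invariant under adding constants to $u$. The main technical (rather than conceptual) obstacle is bookkeeping: one has to verify that all the constants arising from the Orlicz--Young--Sobolev--Poincar\'e manipulations depend only on the parameters listed in the statement, namely on $i_a,s_a$ through the $\Delta_2$-constants of $B$ and $\widetilde B$, on $\l,\L$ through \eqref{bounds:H} and \eqref{B:equiv}, and on the geometric data $|\Omega|$, respectively $d_\Omega,\mathcal L_\Omega$.
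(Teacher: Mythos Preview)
Your proposal is correct and follows essentially the same route as the paper: the paper refers existence and uniqueness to standard variational arguments (citing \cite[Theorem 2.13]{cia11}) and then derives the energy bound by testing with $\varphi=u$, using \eqref{A:bds}, H\"older, the Sobolev (resp.\ Sobolev--Poincar\'e) inequality, and a Young--$\Delta_2$ absorption step, exactly as you do. The only cosmetic difference is that the paper applies Young's inequality to the product $\|f\|_{L^n}\int_\Omega H(\nabla u)$ and then Jensen to pass $B$ inside the integral, whereas you apply Young pointwise and integrate; both yield the same bound with the same dependence of constants.
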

\begin{proof} The proof of the existence and uniqueness of solutions follows via standard variational arguments, analogous to those  
   employed in \cite[Theorem 2.13]{cia11} in the case when $H(\xi)=|\xi|$. The details are omitted for brevity. 
Choosing the test function $\varphi=u$ in  equation \eqref{weak:dir} or \eqref{weak:neu} and using the  property \eqref{A:bds} of $\A$ enable us to deduce that
    \begin{equation}\label{fist:energy}
        \int_\Omega B\big( H(\nabla u)\big)\,dx\leq \int_\Omega \A(\nabla u)\cdot\nabla u\,dx=\int_\Omega f\,u\,dx\,.
    \end{equation}    
    On the other hand, by  H\"older's, a Sobolev inequality,  and \eqref{bounds:H}, one has that
    \begin{equation*}
        \int_\Omega f\,u\,dx\leq  \|f\|_{L^n(\Omega)}\,\|u\|_{L^{n'}(\Omega)}\leq c_0\,\|f\|_{L^n(\Omega)}\,\int_\Omega H(\nabla u)\,dx.
    \end{equation*}  
Note that one has to use the Sobolev 
inequality
for functions vanishing on $\partial \Omega$    for the problem \eqref{eq:dir2}, and the Poincar\'e-Sobolev inequality \eqref{poincare} for the problem \eqref{eq:neu2}. Accordingly,
     the constant  $c_0$  depends on $n,\l,\L$ for the former problem,  and
     on $n,\l,\L,\mathcal{L}_\Omega,d_\Omega$ for the latter.     
Thanks
  to Young's inequality, the properties \eqref{usual:b} and \eqref{dic105},  and Jensen's inequality,  
    \begin{align*}
\|f\|_{L^n(\Omega)}\,\int_\Omega H(\nabla u)\,dx & \leq c_\varepsilon\,\widetilde{B}(\|f\|_{L^n(\Omega)})+c_1\,\varepsilon\, B\Big(\int_\Omega H(\nabla u)\,dx\Big)
        \\
        &\leq c_\varepsilon\,\widetilde{B}(\|f\|_{L^n(\Omega)})+c_2\,\varepsilon \int_\Omega B\big(H(\nabla u)\big)\,dx
    \end{align*}
    for some constants $c_\varepsilon=c(\varepsilon,i_a,s_a)$, $c_1= (c_0,i_a,s_a)$, and $c_2(c_1,|\Omega|)$. Altogether, the inequalities \eqref{stima:energiadir} and \eqref{stima:energianeu}
follow with 
 with the choice of $\varepsilon=c_2/2$.   
\end{proof}

Our results on the global boundedness of the gradient of solutions to the problems 
\eqref{eq:dir2} and \eqref{eq:neu2} under minimally regular domains are stated in the following theorem. As pointed out above, unlike the case when the right-hand side may depend on $\nabla u$, the a priori assumption on the boundedness of $u$ is not needed.

\begin{theorem}[Minimally integrable curvatures]\label{thm:dirneu}
 Let $\Omega$ be a bounded Lipschitz domain in $\rn$ such that $\partial \Omega \in W^2L^{(n-1,1)}$. Let  $H$   be a norm in $\rn$ such that $H^2\in C^2(\R^n\setminus\{0\})$ and fulfilling \eqref{ell:H}, and let $a\in C^1(0,\infty)$ be a function satisfying \eqref{c:a}. Assume that
    either
    $n\geq 3$ and $f\in  L^{n,1}(\Omega)$, or $n=2$ and $f \in (L^2)^{(1,\frac{1}{2})}(\Omega)$.  
    Let $u$ be the weak solution to the Dirichlet  problem \eqref{eq:dir2} or the Neumann problem \eqref{eq:neu2}.
Then $u\in W^{1,\infty}(\Omega)$, and there exists a constant $c=c(n,i_b,s_b,\l,\L,\Omega)$ such that
\begin{equation}\label{stima:dirichlet}
    \|\nabla u\|_{L^\infty(\Omega)}\leq c\,b^{-1}\big(\|f\|_{X_n(\Omega)}\big),
\end{equation}
where $X_n(\Omega)$ is defined by \eqref{X:f}.
\end{theorem}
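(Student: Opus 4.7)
The plan is to combine a multi-level regularization with an anisotropic Bochner-type identity and a rearrangement argument in the spirit of \eqref{dis:distrib}. Concretely, I would first regularize the problem at three scales: replace the operator $\A$ by the Lipschitz vector field $\A_\e$ from \eqref{def:Ae'}--\eqref{est:Ae}, approximate $f$ by a smooth sequence $f_k \to f$ in $X_n(\Omega)$, and, when $\partial\Omega$ is not already smooth, exhaust $\Omega$ from the inside by smooth domains $\Omega_j$ chosen so that their Lipschitz characteristics and the Lorentz norms of their second fundamental forms $\B_{\Omega_j}$ are uniformly controlled by those of $\partial\Omega \in W^2L^{(n-1,1)}$. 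Because of the uniform ellipticity and Lipschitz continuity of $\A_\e$ granted by \eqref{est:Ae}, the regularized solutions $u_{\e,k,j}$ lie in $W^{2,2}(\Omega_j)\cap W^{1,\infty}(\Omega_j)$, so the ensuing manipulations are rigorous.

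The core of the proof is an anisotropic Bochner-type identity for $v := B_\e(H(\nabla u_{\e,k,j}))$. I would differentiate the regularized equation in each coordinate direction, multiply by a test function supported in the super-level set $\{v>t\}$ (for example $(v-t)_+$), and integrate by parts over $\Omega_j$. Thanks to \eqref{pr:ae}, the resulting interior term controls a weighted integral of $|D^2 u_{\e,k,j}|^2$ from below; using either the co-normal Neumann condition or, in the Dirichlet case, the fact that $\nabla u_{\e,k,j}$ is parallel to $\nu$ on $\partial\Omega_j$, the surviving boundary term reduces to an integral over $\partial\Omega_j \cap \{v>t\}$ of a quadratic form in the tangential components of $\nabla u_{\e,k,j}$ against an anisotropic second fundamental form $\B^H_{\Omega_j}$ built from $\B_{\Omega_j}$ and the norm $H$. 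Applying the coarea formula \eqref{coarea:1} and the relative isoperimetric inequality \eqref{rel:isoper} to $v$ then yields, in the spirit of \eqref{dis:distrib}, a pointwise differential inequality for $\mu_v$. Integrating this from $v^*(|\Omega|/2)$ upward, invoking the Hardy--Littlewood inequality \eqref{HL} together with Lorentz duality between $L^{(n,1)}$ and $L^{(n,\infty)}$ (or its $n=2$ counterpart), bounds $\|v\|_{L^\infty}$ by a multiple of $\widetilde{B}_\e(\|f_k\|_{X_n(\Omega)})$; inverting via \eqref{b1} gives $\|\nabla u_{\e,k,j}\|_{L^\infty} \leq c\,b_\e^{-1}(\|f_k\|_{X_n(\Omega)})$ uniformly in $\e,k,j$.

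The main obstacle is the boundary term. Since $\B_\Omega$ is merely $L^{(n-1,1)}$-integrable, it cannot be controlled pointwise---in contrast with the convex setting of Theorem \ref{thm:example}, where its nonnegativity alone suffices---and must instead be absorbed into the interior coercive integral. The mechanism is a sharp trace embedding into $L^{(n-1,\infty)}(\partial\Omega_j)$ applied to $v$, paired with $L^{(n-1,1)}$--$L^{(n-1,\infty)}$ duality; this is exactly what dictates the hypothesis $\partial\Omega \in W^2L^{(n-1,1)}$. A secondary technicality is constructing $\Omega_j$ so that their anisotropic second fundamental forms converge in the correct Lorentz sense while the constants in \eqref{rel:isoper}, \eqref{trace:leoni}, and \eqref{poincare} stay uniformly bounded. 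Once these are settled, passing to the limit $\e \to 0$ via \eqref{cortona10} and \eqref{appr:3}, then $k \to \infty$ and $j \to \infty$---combined with the monotonicity \eqref{monot:A} and the lower semicontinuity of the $L^\infty$ norm under a.e. convergence of gradients (furnished by the interior $C^{1,\alpha}$ theory for $\A$)---delivers \eqref{stima:dirichlet}.
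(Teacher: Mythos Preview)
Your overall architecture---regularize the operator via $\A_\e$, smooth $f$ and $\partial\Omega$, derive a Bochner-type inequality on super-level sets of $H(\nabla u_\e)$, convert it via coarea and relative isoperimetry into a differential inequality for the distribution function, and pass to the limit---matches the paper's. Two points, however, need correction.

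First, the domain approximation goes the other way: the paper takes smooth $\Omega_m$ with $\Omega \subset\subset \Omega_m$ (from outside, not an inner exhaustion), solves the boundary-value problem there, extends the solutions by zero, and passes to the limit. The substantive work in this step is not the convergence of solutions but proving that the function $G_m$ built from $|\B_{\Omega_m}|^{**}$ is controlled by the corresponding $G$ for $\Omega$ uniformly in $m$ (inequality \eqref{claim:Gs}); this relies on a specific construction of $\Omega_m$ and the stability \eqref{convol:star} of $(\cdot)^{**}$ under convolution of the local graph functions.

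Second---and this is the real gap---your mechanism for the boundary term does not give what is needed. A trace embedding into $L^{(n-1,\infty)}(\partial\Omega)$ paired with $L^{(n-1,1)}$--$L^{(n-1,\infty)}$ duality produces only a \emph{global} bound on the boundary contribution, with no smallness to exploit; it cannot be ``absorbed into the interior coercive integral''. What actually works is: (i) bound $\int_{\partial\Omega \cap \{H(\nabla u_\e)>t\}} |\B|\,d\H^{n-1}$ via Hardy--Littlewood by $\int_0^{\H^{n-1}(\partial\Omega \cap \{H(\nabla u_\e)>t\})} |\B|^*$; (ii) invoke the relative isoperimetric inequality \eqref{rel:isoper1} to dominate $\H^{n-1}(\partial\Omega \cap \{H(\nabla u_\e)>t\})$ by $c\,\H^{n-1}(\{H(\nabla u_\e)=t\})$, and then by $c'\mu(t)^{1/n'}$; (iii) after integrating in $t$, the boundary contribution becomes $\hat c\,F_\e(\|H(\nabla u_\e)\|_{L^\infty})\,G(\mu(t_0))$ with $G$ as in \eqref{G:s}. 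The hypothesis $|\B|\in L^{(n-1,1)}(\partial\Omega)$ is exactly what forces $G(s)\to 0$ as $s\to 0$, so choosing $s_0$ small and $t_0 = H(\nabla u_\e)^*(s_0)$ lets one absorb this term into the \emph{left-hand side} $F_\e(\|H(\nabla u_\e)\|_{L^\infty})$. The residual quantity $b_\e(t_0)$ is then controlled via the energy estimate \eqref{stima:energiadir} (or \eqref{stima:energianeu}).

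A minor technical remark: rather than differentiating the equation and testing with $(v-t)_+$, the paper applies the divergence identity of Lemma \ref{lemma:cruc} with $V=\A_\e(\nabla u_k)$ and $W=\tfrac12\nabla_\xi H^2(\nabla u_k)$ directly on the set $\{H(\nabla u_k)>t\}$; this is what produces the clean boundary expressions \eqref{cont:4} and \eqref{cont:4neu} involving $\B^H$ and $\B$ respectively.
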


The same conclusion as in 
Theorem \ref{thm:dirneu}
 holds if $\Omega$ is any bounded convex domain. In this case, the  constant $c$ in the bound for $ \|\nabla u\|_{L^\infty(\Omega)}$  depends on $\Omega$ only through  its Lipschitz characteristic $\mathcal{L}_\Omega$, its  Lebesgue measure $|\Omega|$, and its diameter $d_\Omega$. 

\begin{theorem}[Convex domains]\label{thm:conv}
        Under the same assumptions on $H$, $a$ and $f$ as in Theorem \ref{thm:dirneu}, suppose that $\Omega$ is a convex domain of $\rn$. Let $u$ be a weak solution to the Dirichlet problem \eqref{eq:dir2} or the Neumann problem \eqref{eq:neu2}. Then $u\in W^{1,\infty}(\Omega)$, and there exists a constant $c=c(n,i_b,s_b,\l,\L,\mathcal{L}_\Omega, |\Omega|, d_\Omega)$ such that
\begin{equation}\label{stima:convex}
    \|\nabla u\|_{L^\infty(\Omega)}\leq c\,b^{-1}\big(\|f\|_{X_n}\big)\,.
\end{equation}
\end{theorem}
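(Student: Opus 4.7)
The plan is to deduce Theorem \ref{thm:conv} from Theorem \ref{thm:dirneu} by approximating $\Omega$ with smooth convex domains and exploiting the nonnegativity of their second fundamental form.

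First, I would approximate $\Omega$ by a family $\{\Omega_\delta\}_{\delta>0}$ of strictly convex, bounded domains with $\partial \Omega_\delta \in C^\infty$, constructed e.g. as sublevel sets $\{x\in\rn : p_\delta(x) < 1\}$, where $p_\delta$ is the mollification, on scale $\delta$, of the Minkowski gauge of $\Omega$ relative to an interior point. Then $\Omega_\delta$ is convex, $\partial \Omega_\delta \in W^2L^{(n-1,1)}$ trivially, the second fundamental form obeys $\B_{\Omega_\delta}\geq 0$, and $|\Omega_\delta|\to|\Omega|$, $d_{\Omega_\delta}\to d_\Omega$ with a Lipschitz characteristic $\mathcal{L}_{\Omega_\delta}$ that can be chosen uniformly controlled by $\mathcal{L}_\Omega$. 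For the Dirichlet problem one extends $f$ by zero (or restricts $\Omega_\delta \subset \Omega$); for the Neumann problem one subtracts the mean of $f$ on $\Omega_\delta$ to preserve the compatibility condition \eqref{mean:f0}. Solving each approximating problem furnishes a solution $u_\delta$ satisfying the bound \eqref{stima:dirichlet} with some constant $c_\delta$.

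The core of the proof is to argue that $c_\delta$ is controlled uniformly in $\delta$ by $n, i_b, s_b, \lambda, \Lambda, |\Omega|, d_\Omega, \mathcal{L}_\Omega$ only. Tracing the proof of Theorem \ref{thm:dirneu}, the dependence on the domain enters only through (a) isoperimetric-type constants, as in \eqref{rel:isoper}--\eqref{poincare}, which are governed by $|\Omega_\delta|, d_{\Omega_\delta}, \mathcal{L}_{\Omega_\delta}$, and (b) an integral identity contributing a boundary term of the form $\int_{\partial\Omega_\delta}\B_{\Omega_\delta}(\nabla u_\delta,\nabla u_\delta)\,w\,d\mathcal{H}^{n-1}$, with $w\geq 0$ built from the Orlicz nonlinearity. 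For the Dirichlet problem $\nabla u_\delta$ is parallel to the unit normal on $\partial \Omega_\delta$, whereas for the Neumann problem it is tangential; in both cases the nonnegativity of $\B_{\Omega_\delta}$ on the convex $\Omega_\delta$ renders the boundary term of favourable sign, so it may be either discarded or absorbed into the left-hand side, removing any dependence on $\|\B_{\Omega_\delta}\|_{L^{(n-1,1)}(\partial\Omega_\delta)}$ from the final estimate.

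To conclude, the uniform bound on $\|\nabla u_\delta\|_{L^\infty(\Omega_\delta)}$ together with the energy estimate of Theorem \ref{existence} yields weak-$*$ convergence, along a subsequence, of $u_\delta$ to some $u$ in $W^{1,\infty}_{\mathrm{loc}}(\Omega)$ with strong $L^q_{\mathrm{loc}}$ convergence for every finite $q$; Minty's device, applied to the monotone operator $\A$, identifies $u$ with the solution to the original problem on $\Omega$, while the uniform gradient bound passes to the limit to give \eqref{stima:convex}. I expect the principal obstacle to lie in the third paragraph: verifying directly, by inspecting the integral identity behind Theorem \ref{thm:dirneu}, that the boundary contribution indeed has the favourable sign in each of the Dirichlet and Neumann cases, so that $\|\B_{\Omega_\delta}\|_{L^{(n-1,1)}(\partial\Omega_\delta)}$ truly drops out and only the invariants $|\Omega|, d_\Omega, \mathcal{L}_\Omega$ are retained.
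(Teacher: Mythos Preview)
Your proposal follows essentially the same strategy as the paper: approximate the convex domain by smooth convex domains, observe that the boundary contribution in the key integral inequality has a favourable sign, so that the curvature term drops out and the constant depends only on $|\Omega|$, $d_\Omega$, $\mathcal{L}_\Omega$, and then pass to the limit.

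One point deserves correction, which you in fact flag as your ``principal obstacle''. The boundary term is not of the common form $\int_{\partial\Omega_\delta}\B_{\Omega_\delta}(\nabla u_\delta,\nabla u_\delta)\,w\,d\mathcal{H}^{n-1}$ in both cases. For the Dirichlet problem, since $\nabla u$ is normal on $\partial\Omega$, the term that survives is the \emph{anisotropic mean curvature} $\mathrm{tr}\,\B^H$ (via identity \eqref{ag:24}), and its nonnegativity on convex domains is the content of \eqref{feb315}. For the Neumann problem the boundary term is $\B\big([\tfrac12\nabla_\xi H^2(\nabla u)]_T,[\tfrac12\nabla_\xi H^2(\nabla u)]_T\big)$, which is nonnegative directly from $\B\geq 0$. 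So the mechanism differs between the two cases, and in the anisotropic setting the Dirichlet argument genuinely requires the algebraic fact \eqref{feb315} linking the Euclidean and anisotropic second fundamental forms.

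A minor structural difference: the paper does not apply Theorem~\ref{thm:dirneu} as a black box and then trace the constant, but rather reruns Steps~1--5 of its proof, dropping the boundary integrals at Step~2 (yielding \eqref{sempreconv}) and taking $s_0=|\Omega|/2$ in Step~3; the limit passage uses local $C^{1,\theta}$ compactness (Lieberman) rather than Minty's device, though your route via monotonicity should also work.
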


A close inspection of the proof will reveal that the constant $c$ in \eqref{stima:convex} in fact  depends  on $|\Omega|$ only via a lower bound, and on $d_\Omega$ only via  an upper bound.

\section{Identities for vector fields}\label{sec:vector}
In this section, we establish some differential and integral identities for vector fields.
In what follows, the notation $\partial_j$ stands for $\partial_{x_j}$, the partial derivative with respect to the variable $x_j$.  Moreover,   we adopt the convention about summation over repeated indices.
\\ Let $\Omega$ be an open set in $\rn$. Given a
 vector field $V:\Omega\to \R^n$, with $V=(V^1,\dots,V^n)$, we denote by $\nabla V$ the matrix-valued function defined as
\begin{equation*}
    \nabla V=(\partial_j V^i)_{i,j=1}^n.
\end{equation*}
Therefore, $\nabla V \, V$ is the vector whose $i$-th component agrees with $V^j\partial _jV^i$. 
\begin{lemma}\label{lemma:cruc}
Assume  that   $V,\,W: \Om \to \rn$ with   $V,\,W\in C^2(\Omega)$.
Then
\begin{equation}\label{cruch1}
        \big(\mathrm{div}V\big)\,\big(\mathrm{div}W\big)=\mathrm{tr}\big(\nabla V\,\nabla W\big)+\mathrm{div}\big(V\,\big(\mathrm{div}W\big)-\nabla W\,V\big)\,.
    \end{equation}
\end{lemma}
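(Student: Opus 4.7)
The plan is to verify \eqref{cruch1} by a direct coordinate computation in index notation, exploiting the $C^2$ regularity of $V$ and $W$ only through the symmetry of second partial derivatives. All manipulations are pointwise in $\Omega$, so no integration by parts is needed.

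First I would rewrite every object on both sides in components. With the conventions adopted just before the lemma, $(\nabla V)_{ij}=\partial_j V^i$ and $(\nabla W\,V)^i=(\partial_j W^i)V^j$. Hence
\[
(\mathrm{div}\,V)(\mathrm{div}\,W)=(\partial_i V^i)(\partial_j W^j),\qquad \mathrm{tr}(\nabla V\,\nabla W)=(\partial_j V^i)(\partial_i W^j).
\]
For the divergence term on the right-hand side of \eqref{cruch1}, the Leibniz rule gives
\[
\mathrm{div}\bigl(V\,(\mathrm{div}\,W)\bigr)=(\partial_i V^i)(\partial_j W^j)+V^i\,\partial_i\partial_j W^j,
\]
and
\[
\mathrm{div}\bigl(\nabla W\,V\bigr)=\partial_i\bigl((\partial_j W^i)V^j\bigr)=(\partial_i\partial_j W^i)V^j+(\partial_j W^i)(\partial_i V^j).
\]

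The second-order terms coming from $V$ and $W$ are $V^i\partial_i\partial_j W^j$ and $V^j\partial_i\partial_j W^i$; since $W\in C^2(\Omega)$, Schwarz's theorem yields $\partial_i\partial_j W^j=\partial_j\partial_i W^j$, and after swapping the dummy indices $i\leftrightarrow j$ these two expressions coincide and therefore cancel in the difference. What remains is
\[
\mathrm{div}\bigl(V(\mathrm{div}\,W)-\nabla W\,V\bigr)=(\partial_i V^i)(\partial_j W^j)-(\partial_j W^i)(\partial_i V^j).
\]
Adding $\mathrm{tr}(\nabla V\,\nabla W)=(\partial_j V^i)(\partial_i W^j)$ and relabeling $i\leftrightarrow j$ in one of the two first-order cross-terms shows that they cancel as well, leaving precisely $(\partial_i V^i)(\partial_j W^j)=(\mathrm{div}\,V)(\mathrm{div}\,W)$, which is \eqref{cruch1}.

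There is really no obstacle here: the identity is an algebraic bookkeeping statement once the product rule has been applied and mixed partials have been commuted. The only point requiring a little care is sticking to the conventions $(\nabla V)_{ij}=\partial_j V^i$ and $(\nabla W\,V)^i=(\partial_j W^i)V^j$ consistently, so that the two quadratic cross-terms in the first derivatives of $V$ and $W$ are correctly identified as equal up to a relabeling of indices and thereby cancel.
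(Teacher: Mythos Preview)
Your proof is correct and follows essentially the same route as the paper's: both expand $\mathrm{div}\bigl(V(\mathrm{div}W)\bigr)$ and $\mathrm{div}(\nabla W\,V)$ in coordinates via the product rule, use Schwarz's theorem to identify the second-order terms $V^i\partial_i\partial_j W^j$ and $V^j\partial_i\partial_j W^i$, and then subtract to obtain \eqref{cruch1}. The paper's write-up is slightly more compressed (it packages the second-order term as $V\cdot\nabla(\mathrm{div}W)$), but the argument is the same.
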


\begin{proof}
    Computations show that
    \begin{equation*}
        \mathrm{div}\Big(V\,\big(\mathrm{div}W\big) \Big)=\big(\mathrm{div}V\big)\,\big(\mathrm{div}W\big)+V\cdot \nabla\big(\mathrm{div}W\big)\,.
    \end{equation*}
   Schwarz's theorem about second mixed derivatives and  an exchange of  the indices tell us that
    \begin{align*}
\mathrm{div}\big(\nabla W\,V\big)&=\partial_j\big( \partial_i W^j\,V^i\big)=\partial_j V^i\,\partial_i W^j+V^i\,\partial_i\partial_j W^j
    \\
    &= \mathrm{tr}\big(\nabla V\,\nabla W\big)+V\cdot \nabla\big(\mathrm{div}W\big)\,.
    \end{align*}
Equation \eqref{cruch1} then follows by subtracting the two equations above.
\end{proof}

Let $\Omega$ be a set of locally finire perimeter. We denote by $\partial^* \Omega$ its reduced boundary, and denote by $\nu=\nu(x)$ its normal at $x\in \partial^*\Omega$. 
\\
Given a vector field $V: \rn\to \rn$, its tangential component $V_T$ on $ \partial^* \Omega$ is defined by 
\begin{equation*}
    V_T=V-(V\cdot \nu)\,\nu\,,\quad\text{on $\partial^*\Omega$.}
\end{equation*}
The notations $\nabla_T$ and $\mathrm{div}_T$ are adopted for the tangential gradient and divergence operators on $\partial^*\Omega$. 
Therefore, if $u\in C^1(\rn)$, then
\begin{equation*}
    \nabla_T u=\nabla u- (\partial_\nu u)\,\nu \quad \text{on $\partial^*  \Omega$,}
\end{equation*}
where $\partial_\nu u=\nabla u\cdot \nu$ denotes the normal derivative of $u$. Moreover, for $V\in C^1(\rn)$  we have
\begin{equation*}
    \mathrm{div}_T V=\mathrm{div}V-(\partial_\nu V\cdot\nu) \quad  \text{on $\partial^* \Omega$.}
\end{equation*}

\begin{lemma}\label{quick:form-new}
\rm{
 Let $\Omega\subset \rn$ be a bounded open set of finite perimeter. Assume that  $V,\,W: \rn \to \rn$ are vector fields of class $C^1$ in an open neighborhood $\mathcal{U}$ of $\overline{\Omega}$.
Then, 
\begin{align}\label{formuletta1}
\int_\Omega\big(\mathrm{div}\,V\big)\,\big(\mathrm{div}\,W\big) \,dx=&\int_\Omega\mathrm{tr}\big(\nabla\,V\,\nabla W\big)\,dx+\int_{\partial^*\Omega}\Big((\mathrm{div} W)\,V\cdot\nu- \nabla W\, V \cdot \nu  \Big)\,d\mathcal{H}^{n-1}\,,
\end{align}
and
\begin{equation}\label{gen21}
    (\mathrm{div}\,W)\,V\cdot \nu-  \nabla W\, V \cdot\nu =(\mathrm{div}_T W)\,V\cdot\nu-\nabla_T W \, V_T \cdot \nu \quad \text{on $\partial^* \Omega$}\,.
\end{equation}
}
\end{lemma}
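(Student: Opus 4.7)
The plan is to derive \eqref{formuletta1} from the pointwise formula of Lemma \ref{lemma:cruc} via integration and a Gauss--Green formula for sets of finite perimeter, and then establish \eqref{gen21} through a direct algebraic splitting on $\partial^*\Omega$.

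For \eqref{formuletta1}, I would first assume the stronger regularity $V,W\in C^2(\mathcal U)$. Then Lemma \ref{lemma:cruc} gives pointwise
\begin{equation*}
(\mathrm{div}\, V)(\mathrm{div}\, W) = \mathrm{tr}(\nabla V\,\nabla W) + \mathrm{div}\bigl(V(\mathrm{div}\, W) - \nabla W\,V\bigr),
\end{equation*}
and integrating over $\Omega$ allows me to apply the Gauss--Green formula for sets of finite perimeter (in the form $\int_\Omega \mathrm{div}\,X\,dx=\int_{\partial^*\Omega}X\cdot\nu\,d\mathcal H^{n-1}$, valid for $X\in C^1(\mathcal U)$) to the vector field $X = V(\mathrm{div}\, W) - \nabla W\,V$. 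This produces \eqref{formuletta1} under the $C^2$ hypothesis.

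To descend to the stated $C^1$ regularity, I would mollify: set $V_\e=V*\rho_\e$, $W_\e=W*\rho_\e$ with a standard mollifier $\rho_\e$. Since $\overline\Omega\Subset\mathcal U$ and $V,W\in C^1(\mathcal U)$, both $V_\e,W_\e$ together with their first derivatives converge uniformly to $V,W$ and their first derivatives on $\overline\Omega$. The volume integrals on each side of \eqref{formuletta1} depend continuously on $V,W$ in the $C^1$ topology; for the boundary integral, uniform convergence on $\overline\Omega\supseteq\partial^*\Omega$ combined with the finite perimeter bound $\mathcal H^{n-1}(\partial^*\Omega)<\infty$ is sufficient to pass to the limit. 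Writing \eqref{formuletta1} for $V_\e,W_\e$ and letting $\e\to 0^+$ yields the identity for $V,W\in C^1$.

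The identity \eqref{gen21} is purely algebraic and holds pointwise on $\partial^*\Omega$. Using the definitions, $\mathrm{div}\, W - \mathrm{div}_T W = \nu^i\nu^j\,\partial_j W^i$, while $\nabla_T W = \nabla W - \nu\otimes (\nabla W)^\top\nu$ has entries $\partial_j W^i - \nu_j\nu_k\partial_k W^i$, and $V_T\cdot\nu=0$ yields
\begin{equation*}
\nabla_T W\, V_T\cdot\nu = V_T^{\,j}\partial_j W^i\,\nu^i = V^j\partial_j W^i\,\nu^i - (V\cdot\nu)\,\nu^j\nu^i\partial_j W^i.
\end{equation*}
Subtracting the right-hand side of \eqref{gen21} from the left-hand side then gives
\begin{equation*}
(V\cdot\nu)\,\nu^i\nu^j\partial_j W^i - (V\cdot\nu)\,\nu^j\nu^i\partial_j W^i = 0,
\end{equation*}
which proves \eqref{gen21}. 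The only mildly delicate step is invoking Gauss--Green for a generic set of finite perimeter, but this is a classical result; the rest is routine index bookkeeping and an approximation argument.
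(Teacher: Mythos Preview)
Your proof is correct and follows essentially the same route as the paper: approximate $V,W$ by mollification, apply Lemma~\ref{lemma:cruc} together with the Gauss--Green formula on sets of finite perimeter to the smooth approximants, and pass to the limit using $C^1$-convergence and $\mathcal H^{n-1}(\partial^*\Omega)<\infty$. The only cosmetic differences are that the paper first multiplies by a smooth cutoff to reduce to compactly supported fields before convolving, and it establishes \eqref{gen21} for the approximants $V_k,W_k$ and then lets $k\to\infty$, whereas you verify \eqref{gen21} directly for $V,W\in C^1$ by the same index computation---which is legitimate since the identity involves only first derivatives and the normal $\nu$.
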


\begin{proof}
We may assume that $V,W$ are compactly supported in $\rn$, otherwise it suffices to multiply them by a smooth compactly supported function in $\mathcal{U}$, which  equals to 1 in  $\overline \Omega$. 
Thus,  the vector fields $V,\,W$ can be approximated via standard convolutions, by  sequences $\{V_k\}$ and $\{W_k\}$ of smooth, compactly supported functions in $\rn$, such that
\begin{equation}\label{gen17}
V_k \to V\,, \quad W_k \to W\quad \text{in $C^1(\rn)$.}
\end{equation}
Fix $k \in \mathbb N$. Thanks to the identity \eqref{cruch1} and the divergence theorem on sets of finite perimeter, we have 
\begin{align}\label{formuletta11g}
\int_\Omega\big(\mathrm{div}\,V_k\big)\,\big(\mathrm{div}\,W_k\big) \,dx=&\int_\Omega\mathrm{tr}\big((\nabla\,V_k)\,(\nabla\,W_k)\big)\,dx+\int_{\partial^*\Omega}\Big((\mathrm{div}W_k)\, V_k-  \nabla W_k\, V_k  \Big)\cdot \nu  \,d\mathcal{H}^{n-1}\,.
    \end{align}
Passing to the limit as $k \to \infty$ in the latter equation   yields \eqref{formuletta1}.
\\
Next, by subtracting  the equations
\begin{equation*}
    (\mathrm{div}\,W_k)\,V_k\cdot\nu=(\mathrm{div}_T\,W_k)\,V_k\cdot\nu+(\partial_\nu W_k\cdot \nu)\,V_k\cdot \nu
\end{equation*}
and
\begin{equation*}
    \nabla W_k\, V_k \cdot\nu = \nabla_T W_k \, (V_k)_T \cdot \nu+ (\partial_\nu W_k\cdot \nu )\,(V_k\cdot\nu)\,,
\end{equation*}
 which hold on $\partial^* \Omega$, we obtain
\begin{equation}
    (\mathrm{div}\,W_k)\,V_k\cdot \nu-  \nabla W_k\, V_k \cdot\nu =(\mathrm{div}_T W_k)\,V_k\cdot\nu-\nabla_T W_k \, (V_k)_T \cdot \nu \quad\text{on }\partial^* \Omega\,.
\end{equation}
Equation \eqref{gen21} follows by 
letting $k\to \infty$.
\end{proof}

In our proofs, a  role is played by the \textit{second fundamental form} and its anisotropic counterpart. Let $\Omega$ be a  bounded open set with $\partial \Omega \in C^2$. The classical second fundamental form $\B$ of $\partial \Omega$ agrees with the shape operator (also called Weingarten operator) which is given by $\nabla_T \nu$.  Here, and in what follows, $C^1$--functions and vector fields are assumed to be extended, with the same regularity, in a neighborhood of $\partial \Omega$. 
In particular, at any point $x\in \partial \Omega$,  $\nabla_T \nu$ maps  the tangent space $\nu^\perp$ at $x$ onto itself, and we have
\begin{equation*}
    \B(\eta,\zeta)=\nabla_T \nu\, \eta\cdot \zeta\quad\text{for $\eta, \zeta\in \nu^\perp$.}
\end{equation*}
Given $V,W:\partial \Omega\to \R^n$ be $C^1$--vector field on $\partial \Omega$, and notice that by Leibniz rule for tangential derivatives, we have
\begin{align}
\label{gen24:neu}
     \nabla_T W\,V_T\cdot \nu&=\nabla_T(W\cdot \nu)\,V_T- \nabla_T\nu\,V_T\cdot W_T
    =\nabla_T(W\cdot \nu)\,V_T- \B(V_T,W_T)\,.  
\end{align}

We now define the anisotropic second fundamental form. For its properties recalled below, we  refer to \cite[Section 3]{accfm}.
By the homogeneity  of the norm $H$, we have that $\nabla_\xi^2 H(\nu)$ is a positive isomorphism of the tangent space $\nu^\perp$. Hence,
\begin{equation}\label{gen11}
\nabla_\xi^2 H(\nu)\,(\nabla_T \nu) : \nu^\perp \to  \nu^\perp.
\end{equation}
 The \textit{anisotropic second fundamental form of $\partial \Omega$}, denoted by $\mathcal{B}^H$, is accordingly defined as
\begin{equation*}
\B^H(\eta,\zeta)=\nabla_\xi^2 H(\nu)\nabla_T \nu\,\eta \cdot \zeta \quad \quad \text{for $\eta, \zeta\in \nu^\perp$.}
\end{equation*}
Thus,
\begin{equation} \label{def_BH}
\mathcal{B}^H=\nabla_\xi^2 H(\nu)\,\nabla_T \nu=\nabla_T \big(\nabla_\xi H(\nu)\big).
\end{equation} 
Furthermore,  the \textit{anisotropic mean curvature} is given by
\begin{equation} \label{def_AMC}
    \mathrm{tr}\,\mathcal{B}^H=\mathrm{div}_T\big(\nabla_\xi H(\nu)\big)=\mathrm{trace}\big(\nabla^2_\xi H(\nu)\,\nabla_T \nu  \big) \,.
\end{equation}
When $H$ is the Euclidean norm, one has that $\nabla_\xi^2 H(\nu)= {\rm Id}_{\nu^\perp}$, and hence $\B^H=\B$,  the classical second fundamental form of $\partial \Omega$.
\\
Notice that there exist positive constants $c=c(n,\l,\L)$ and $C=C(n,\l,\L)$  such that
\begin{equation}\label{equiv:BBH}
    c\,|\B|\leq|\B^H|\leq C\,|\B| \qquad \text{on $\partial \Omega$.}
\end{equation}
Furthermore,
\begin{equation}\label{feb315}
     \text{if \,\, $\B\geq 0\,\,[>0]$, \,\, then \,\, $\mathrm{tr} (\B^H)\geq 0\,\,[>0]$. }
 \end{equation}
 Finally, let us recall that, if the function $v$ is twice continuously differentiable in a neighborhood of $\partial \Omega$, and 
 $v=0$ on $\partial \Omega$, then
\begin{align}
\label{ag:24}
\mathrm{div}_T\Big(\tfrac{1}{2}\nabla_\xi H^2(\nabla v) \Big) \,\tfrac{1}{2}\nabla_\xi H^2(\nabla v)\cdot \nu
   &-\nabla_T\Big[\tfrac{1}{2}\nabla_\xi H^2(\nabla v)\Big] \,\Big[\tfrac{1}{2}\nabla_\xi H^2(\nabla v)\Big]_T\cdot \nu
    \\ 
    & =H(\nu)\,H^2(\nabla v)\,\mathrm{tr}\,\mathcal{B}^H\,\quad\text{on $\partial \Omega$,}\nonumber
\end{align}
see
\cite[Equation (4.28)]{accfm}.

\section{ Proof of Theorem \ref{thm:dirneu}, Dirichlet problems}\label{proof:dir}
Here we are concerned with global Lipschitz continuity of solutions to the Dirichlet problem \eqref{eq:dir2}.

\begin{proof}[Proof of  Theorem \ref{thm:dirneu},  Dirichlet problems] The proof requires approximation arguments at various levels. We split it into steps, for ease of presentation.
\par \noindent \textit{Step 1.} Here we assume that
\begin{equation}\label{fcinf}
    f\in C^\infty(\overline{\Omega})
\end{equation}
and
\begin{equation}\label{deom:inf}
    \partial \Omega \in C^\infty\,.
\end{equation}
Let $u$ be the solution to the problem \eqref{eq:dir2}. 
We begin by approximating $B(t)$ via a family of functions with quadratic growth $B_\e$ defined as in \eqref{B_e}, and the vector field $\mathcal A$ via $\mathcal A_\e$ defined by \eqref{def:Ae'}. Notice that $ W^{1,B_\e}_0(\Omega)=  W^{1,2}_0(\Omega)$, as the function $B_\e(t)$  is equivalent to $t^2$ for every $\e \in (0,1)$.
\\ Consider the family of  functions $\{u_\e\}$, where $u_\e\in W^{1,2}_0(\Omega)$ is the unique weak solution to the Dirichlet problem
\begin{equation}\label{eq:ue}
    \begin{cases}
            -\mathrm{div}\big(\A_\e(\nabla u_\e)\big)=f\quad&\text{in }\Omega
    \\
    u_\e=0\quad&\text{on }\partial\Omega\,.
    \end{cases}
\end{equation}
The existence and uniqueness of this solution is guaranteed, for instance, by Theorem \ref{existence}.
\\ We claim that
\begin{equation}\label{reg:ue}
    u_\e\in C^{1,\beta}(\overline{\Omega})\cap W^{2,2}(\Omega)\,,
\end{equation}
for some $\beta\in (0,1)$ independent on $\e$, and
\begin{equation}\label{conv:C1u}
    u_\e\to u\quad\text{in $C^1(\overline{\Omega})$.}
\end{equation}
Indeed, thanks to the properties \eqref{appr:2} and \eqref{pr:ae}, we can apply \cite[Theorem 2]{tal} and find a constant $c_0>0$ independent on $\e$ such that $\|u_\e\|_{L^\infty(\Omega)}\leq c_0$. It then follows from \cite[Theorem 1.7 and subsequent remarks]{lieb91} (see also \cite[Theorem 1]{lieb88} and \cite{Antonini1})   that there exist two constants $\beta\in (0,1)$ and $c_1>0$, independent of $\e$, such that $ u_\e\in C^{1,\beta}(\overline{\Omega})$ and
\begin{equation}\label{c1a:unif}
\|u_\e\|_{C^{1,\beta}(\overline{\Omega})}\leq c_1\,.
\end{equation}
Thereby, there exists a sequence $u_{\e_l}$ and a function $v\in C^1(\overline{\Omega})$ such that $u_{\e_l}\xrightarrow{l\to\infty} v$ in $C^1(\overline{\Omega})$. On the other hand, by letting $l\to \infty$ in the weak formulation of \eqref{eq:ue}, with $\varepsilon$ replaced with $\varepsilon_\ell$, we deduce that $v$ is a weak solution to \eqref{eq:dir2}, whence $v=u$ by uniqueness. The same argument applied to any sequence $\{u_{\e_l}\}$ extracted from the family $\{u_{\e}\}$ yields \eqref{conv:C1u}.
\\
In order to prove that $u_\e\in W^{2,2}(\Omega)$, one can exploit \eqref{est:Ae} and apply \cite[Theorem 8.2] {ben:fre} (see also \cite[pp. 270-277]{ladyz} or \cite[Chapter 8.4]{giusti}). Equation  \eqref{reg:ue} is thus established.

\smallskip
\par \noindent \textit{Step 2.}
Let $f$, $\Omega$ and $u_\e$ be as in Step 1. We claim that there exists a positive constant $c=c(n,i_a,s_a,\l,\L)$ such that
\begin{align}
\label{sempre}
c\int_{\{H(\nabla u_\e)=t\}}b_\e(t)\,|\nabla H(\nabla u_\e)|\, d\H^{n-1}\leq &\int_{\{H(\nabla u_\e)
    >t\}}\frac{f^2}{a_\e\big( H(\nabla u_\e)\big)}\,dx+\,t\,\int_{\{H(\nabla u_\e)=t\}}|f|\,d\H^{n-1}
    \\
    &-\,\int_{\partial \Omega\cap \{H(\nabla u_\e)>t\}}b_\e\big(H(\nabla u_\e)\big)\,H(\nu)\,H(\nabla u_\e)\,\mathrm{tr}\,\B^H\,d\H^{n-1}\nonumber 
\end{align}   
for a.e. $t>0$. 
\\
To prove the inequality \eqref{sempre}, we fix $\e \in (0,1)$ and  construct a sequence $\{u_{\e,k}\}\subset C^\infty(\overline{\Omega})$, denoted by $\{u_k\}$ for simplicity, such that $u_k= 0$ on $\partial \Omega$, and
\begin{equation}\label{conv:uk}
    u_k\xrightarrow{k\to\infty} u_\e\quad \text{in $W^{2,2}(\Omega)\cap C^1(\overline{\Omega})$.}
\end{equation}
Define the regularized vector field
    \begin{equation}\label{def:Aed}
        \A_{\e,\d}(\xi)=\A_\e\ast\rho_\d(\xi)\quad \text{for $\xi\in \R^n$,}
    \end{equation}
where $\rho_\d$ denotes  a standard, radially symmetric convolution kernel. Then  $\A_{\e,\d}\in C^\infty(\R^n)$, $\lim_{\d\to 0}\A_{\e,\d}=\A_\e$ locally uniformly in $\R^n$, and, thanks to \eqref{est:Ae} and the properties of convolutions, 
    \begin{equation}\label{est:Aed}
   \e\,\l\,\min\{1,i_{b}\}\,\mathrm{Id} \leq \nabla_\xi \A_{\e,\d}(\xi)
\leq\e^{-1}\,\L\,\max\{1,s_{b}\}\,\mathrm{Id} \quad \text{for $\xi\in \R^n$.}
\end{equation}
Next, consider the solution $w_{\e,\d}\in W^{1,2}(\Omega)$   to the problem:
\begin{equation}\label{eq:ged}
    \begin{cases}
                -\mathrm{div}\big( \A_{\e,\d}(\nabla w_{\e,\d})\big)=f\quad& \text{in $\Omega$}
        \\
         w_{\e,\d}=0\quad& \text{on $\partial\Omega$.}
    \end{cases}
\end{equation}
Thanks to \eqref{est:Aed}, classical results tell us that
 $w_{\e,\d}\in W^{2,2}(\Om)$, and
\begin{equation}\label{H2:d}
    \|w_{\e,\d}\|_{W^{2,2}(\Om)}\leq c_0
\end{equation}
for some constant $c_0=c_0(n,i_b,s_b,\l,\L,\e,\Om,\|f\|_{L^\infty(\Om)})$, see e.g. \cite[pp. 270-277]{ladyz}, or \cite[Theorem 8.2]{ben:fre}, or \cite[Chapter 8.4]{giusti}.
By \cite[Corollary 6.1]{stampacchia}, there exists a constant $c_1$, depending on the same quantities as $c_0$, such that
\begin{equation}
\|w_{\e,\d}\|_{L^\infty(\Om)}\leq c_1.
\end{equation}
Coupling  this piece of information with
 \cite[Theorem 1]{lieb88} entails that
\begin{equation}\label{unif:indeltag}
\|w_{\e,\d}\|_{C^{1,\theta}(\overline{\Om})}\leq c_2\,,
\end{equation}
for some constants $c_2$ and $\theta\in (0,1)$, with the same dependence as $c_0$ and $c_1$. In particular, these constants are independent of $\delta$. From \eqref{eq:ged},\eqref{H2:d}, \eqref{unif:indeltag} and standard elliptic regularity theory, it follows that
\begin{equation}\label{zxx}
    w_{\e,\d}\in C^{\infty}(\ov{\Om})\,.
\end{equation}
\\ Also, thanks to the inequalities \eqref{H2:d} and \eqref{unif:indeltag}, there exist a function $w_\e\in W^{2,2}(\Om)\cap C^{1,\theta}(\overline{\Om})$ and a sequence $\{\d_l\}$ such that $\d_l \to 0^+$,
\begin{equation}\label{zx}
   w_{\e,\d_l}  \xrightarrow[]{l\to \infty} w _\e \quad\text{in $C^{1,\theta'}(\overline{\Om})$} \quad  \text{and} \quad  
w_{\e,\d_l}   \xrightarrow[]{l\to\infty} w _\e
\quad \text{weakly in $W^{2,2}(\Om)$}
\end{equation}
for every $\theta ' \in (0,\theta)$. Passing to the limit as  $l\to \infty$ in the weak formulation of the  problem \eqref{eq:ged} shows that $w_\e$ is a solution to the problem \eqref{eq:ue}, whence $w_\e=u_\e$ by the uniqueness of such a solution. 
Owing to the Banach-Saks theorem, the weak convergence \eqref{zx} in the Hilbert space $W^{2,2}(\Omega)$ ensures that there exists a 
subsequence $\{\d_{l_j}\}$ such that, on setting
$$ u_k=u_{\e,k}= \frac{1}{k}\sum_{j=1}^k w_{\e,\d_{l_j}},$$
one has that
\begin{equation}\label{BS1}
     u_{k}  \xrightarrow[]{k\to \infty}  u_\e\quad\text{in $W^{2,2}(\Om)\cap C^{1,\theta'}(\overline{\Omega})$.}
\end{equation}
Equation  \eqref{conv:uk} hence follows.
\\
Now, set
\begin{align}\label{eq:uk}
 f_k=-\mathrm{div}\big( \A_\e(\nabla u_k)\big), 
\end{align}
and recall that $u_k=0$ on $\partial \Omega$. Since $|\{H(\nabla u_k)=t\}|=0$ for a.e. $t>0$ by Sard's theorem, we have $\partial^e\big(\{H(\nabla u_k)>t\}= \partial^e\big(\{H(\nabla u_k)\geq t\}$ for a.e. $t>0$. 
The following chain holds, up to sets of $\mathcal H^{n-1}$ measure zero, for a.e. $t>0$:
\begin{align}\label{bdr:0bis}
   \partial^*\{H(\nabla u_k)>t\}&= 
   \partial^e\{H(\nabla u_k)>t\}
   \\ \nonumber &=\Big(\Omega \cap \partial^e \big\{ H(\nabla u_k)>t\big\}\Big)\cup\Big(\partial \Omega\cap\partial^e \big\{ H(\nabla u_k)\geq t\big\}\Big)
    \\
    &= \big\{x\in \Omega: H(\nabla u_k(x))=t\big\}\cup \big\{x\in \partial \Omega:\,H(\nabla u_k(x))\geq t\big\}
    \nonumber
    \\
    &=  \big\{x\in \Omega: H(\nabla u_k(x))=t\big\}\Big)\cup \big\{x\in \partial \Omega:\,H(\nabla u_k(x))> t\big\}
    \nonumber
       \\ \nonumber
    &=  \big\{ H(\nabla u_k)=t\big\}\cup \Big(\partial \Omega \cap \big\{H(\nabla u_k)> t\big\}\Big)
\end{align}
 Notice that here we have  made use of   the fact that, by \cite[Theorem 16.2]{maggi}, $\H^{n-1}(\partial^e\Omega\setminus\partial^*\Omega)=0$,
 and a property of any Sobolev function  $v$  which ensures that,  up to sets of $\mathcal H^{n-1}$ measure zero,
 \begin{align}
     \label{lug2}
     \Omega \cap \partial^e \big\{ v>t\big\}= \{x\in \Omega: v(x)=t\big\} \quad \text{for a.e. $t>0$,}
 \end{align}
 see \cite{br:ziemer}.
 We have also exploited 
  a property of traces \cite[Lemma 9.5.1.2]{maz} and a version of \eqref{t:lebzero} for functions defined on $\partial \Omega$, which, in turn, is a consequence of a version of the coarea formula on $\partial \Omega$ (\cite[Chapter 2, Theorem 7.3]{simon}) to deduce that, if $v\in C^1(\overline \Omega)$, then
\begin{align}
    \label{lug3}
    \partial \Omega\cap\partial^e \big\{ v> t\big\} & = \partial \Omega\cap\partial^e \big\{ v\geq t\big\}= \big\{x\in \partial \Omega:\,v(x)\geq t\big\}\\ \nonumber &=  \big\{x\in \partial \Omega:\,v(x)> t\big\} = \partial \Omega \cap \{v>t\}\quad \text{for a.e. $t>0$,}
\end{align}
up to sets of $\mathcal H^{n-1}$ measure zero.
  \par
\noindent In the remaining part of this proof, we shall we shall consider, without further mentioning, only values of $t>0$ for which the properties \eqref{bdr:0bis}--\eqref{lug3} hold.
\\
We can thus apply Lemma \ref{quick:form-new}, with $\Omega$ replaced with $\{H(\nabla u_k)>t\}$, and with $V=\A_\e(\nabla u_k)$, $W=\tfrac{1}{2}\nabla_\xi H^2(\nabla u_k)$. Hence, thanks to \eqref{bdr:0bis}, we obtain:
\begin{align}
\label{tr1}
        -\int_{\{H(\nabla u_k)>t\}}& f_k \,\mathrm{div}\big(\tfrac{1}{2}\nabla_\xi H^2(\nabla u_k)\big)\,dx=\int_{\{H(\nabla u_k)>t\}}\mathrm{div}\big(\A_\e(\nabla u_k) \big)\mathrm{div}\big(\tfrac{1}{2}\nabla_\xi H^2(\nabla u_k)\big)\,dx
        \\
        =&\int_{\{H(\nabla u_k)>t\}}\mathrm{tr}\Big(\big[\nabla\A_\e(\nabla u_k)\big]\big[ \nabla\big( \tfrac{1}{2}\nabla_\xi H^2(\nabla u_k)\big)\big]\Big)\,dx\nonumber
        \\
    &+\int_{\partial^* \{H(\nabla u_k)>t\}}\Big(\big(\mathrm{div} [\tfrac{1}{2}\nabla_\xi H^2(\nabla u_k)]\big)\,\A_\e(\nabla u_k)\cdot\nu- \nabla [\tfrac{1}{2}\nabla_\xi H^2(\nabla u_k)]\, [\A_\e(\nabla u_k)] \cdot \nu \Big)\,d\mathcal{H}^{n-1}\,\nonumber
\\
        =&\int_{\{H(\nabla u_k)>t\}}\mathrm{tr}\Big(\big[\nabla\A_\e(\nabla u_k)\big]\big[ \nabla\big( \tfrac{1}{2}\nabla_\xi H^2(\nabla u_k)\big)\big]\Big)\,dx\nonumber
        \\
    &+\int_{\{H(\nabla u_k)=t\}}\Big(\big(\mathrm{div} [\tfrac{1}{2}\nabla_\xi H^2(\nabla u_k)]\big)\,\A_\e(\nabla u_k)\cdot\nu- \nabla [\tfrac{1}{2}\nabla_\xi H^2(\nabla u_k)]\, [\A_\e(\nabla u_k)] \cdot \nu \Big)\,d\mathcal{H}^{n-1}\,\nonumber
     \\
&+\int_{\partial \Omega \cap \{H(\nabla u_k)>t\}}\Big(\big(\mathrm{div} [\tfrac{1}{2}\nabla_\xi H^2(\nabla u_k)]\big)\,\A_\e(\nabla u_k)\cdot\nu- \nabla [\tfrac{1}{2}\nabla_\xi H^2(\nabla u_k)]\, [\A_\e(\nabla u_k)] \cdot \nu \Big)\,d\mathcal{H}^{n-1}\,.\nonumber
\end{align}
We analyze each term  of the rightmost side of the above chain separately.  To begin with, recall that,  given any  symmetric, positive semi-definite matrices $X$ and $Y$, 
\begin{equation}\label{tr:in}
    \mathrm{tr}(XY)\geq \l_{min}\,\mathrm{tr} Y,
\end{equation}
 where $\l_{min}>0$ denotes the smallest eigenvalue of $X$. Hence, via the chain rule for vector valued functions (see \cite{mm}), and Equations \eqref{ell:H},   \eqref{pr:ae}, and \eqref{tr:in}, we deduce that
\begin{align}
\label{tr2}
        \mathrm{tr}\Big( \nabla \big(\tfrac{1}{2}\nabla_\xi H^2(\nabla u_k)\big)\,\nabla \big(\mathcal{A}_\e(\nabla u_k)\big)\Big)&=\mathrm{tr}\Big( \tfrac{1}{2}\,\nabla_\xi^2 H^2(\nabla u_k)\,\nabla^2 u_k\,\nabla_\xi \mathcal{A}_\e(\nabla u_k)\,\nabla^2 u_k\Big)
        \\
        &\geq \lambda\,\mathrm{tr}\Big(\nabla^2 u_k\,\nabla_\xi \mathcal{A}_\e(\nabla u_k)\,\nabla^2 u_k\Big)\nonumber
        \\
        &=\lambda\,\mathrm{tr}\Big(\nabla_\xi \mathcal{A}(\nabla u_k)\,\nabla^2 u_k\,\nabla^2 u_k\Big)\nonumber
        \\
        &\geq \lambda^2\,a_\e\big(H(\nabla u_k)\big)\,\mathrm{tr}\big(\nabla^2 u_k\,\nabla^2 u_k\big)\nonumber
        \\
        &= \lambda^2\,a_\e\big(H(\nabla u_k)\big)\,|\nabla^2 u_k|^2,\nonumber
\end{align}
where $|\nabla^2 u_k|^2=\sum_{i,j=1}^n |\partial_{i,j} u_k|^2$. Notice that the matrix $\nabla^2 u_k\,\nabla_\xi \mathcal{A}_\e(\nabla u_k)\,\nabla^2 u_k$ is actually symmetric and positive definite and $\nabla^2 u_k \,\nabla^2 u_k$ is symmetric and positive semi-definite, since 
$\mathcal{A}_\e(\nabla u_k)$ is symmetric and positive definite, and
$\nabla^2 u_k$ is symmetric. 
\\
By the chain rule, \eqref{ell:H} and Young's inequality, 
\begin{align}
    \label{trr22}
    & \int_{\{H(\nabla u_k)>t\}} f_k \,\mathrm{div}\big(\tfrac{1}{2}\nabla_\xi H^2(\nabla u_k)\big)\,dx  = \int_{\{H(\nabla u_k)>t\}} f_k\,\mathrm{tr}\Big(\tfrac{1}{2}\nabla^2_\xi H^2(\nabla u_k) \,\nabla^2 u_k\Big)\,dx
    \\
    &\hspace{0.3cm} \leq c(n)\,\Lambda\int_{\{H(\nabla u_k)>t\}} |f_k|\,|\nabla^2 u_k|\,dx\nonumber
    \\
    &\hspace{0.3cm}\leq\frac{\l^2}{2}\, \int_{\{H(\nabla u_k)>t\}}\,a_\e\big(H(\nabla u_k)\big)\,|\nabla^2 u_k|^2 dx+c'(n)\frac{\L^2}{\l^2}\int_{\{H(\nabla u_k)>t\}} \frac{f_k^2}{a_\e\big(H(\nabla u_k)\big)}\,dx\,.\nonumber
\end{align}
Combining \eqref{tr1}, \eqref{tr2}, and \eqref{trr22}  yields:
\begin{align}\label{temp:1}
    & c\,\int_{\{H(\nabla u_k)>t\}} \frac{f_k^2}{a_\e\big( H(\nabla u_k)\big)}\,dx  \geq \frac{\l^2}{2}\,\int_{\{H(\nabla u_k)>t\}}\,a_\e\big(H(\nabla u_k)\big)\,|\nabla^2 u_k|^2 dx
\\
    &+\int_{\{H(\nabla u_k)=t\}}\Big(\big(\mathrm{div} [\tfrac{1}{2}\nabla_\xi H^2(\nabla u_k)]\big)\,\A_\e(\nabla u_k)\cdot\nu- \nabla [\tfrac{1}{2}\nabla_\xi H^2(\nabla u_k)]\, [\A_\e(\nabla u_k)] \cdot \nu \Big)\,d\mathcal{H}^{n-1}\,\nonumber
     \\
&+\int_{\partial \Omega \cap \{H(\nabla u_k)>t\}}\Big(\big(\mathrm{div} [\tfrac{1}{2}\nabla_\xi H^2(\nabla u_k)]\big)\,\A_\e(\nabla u_k)\cdot\nu- \nabla [\tfrac{1}{2}\nabla_\xi H^2(\nabla u_k)]\, [\A_\e(\nabla u_k)] \cdot \nu \Big)\,d\mathcal{H}^{n-1}\,.\nonumber
\end{align}
Consider the integral over $\{H(\nabla u_k)=t\}$ on  the right-hand side of \eqref{temp:1}. For a.e. $t>0$, the outward unit normal vector $\nu$ to $\{H(\nabla u_k)=t\}$ is given by
\begin{equation}\label{nu:gamma0}
    \nu=-\frac{\nabla H(\nabla u_k)}{|\nabla H(\nabla u_k)|} \quad\text{ $\mathcal H^{n-1}$-a.e. on $\{H(\nabla u_k)=t\}$}.
\end{equation}
Also, by \eqref{eq:uk} and the chain rule, 
\begin{align}\label{tr:f}
    -f_k&=\mathrm{div}\big(\A_\e(\nabla u_k)\big)  =\mathrm{div}\big(a_\e(H(\nabla u_k))\,\tfrac{1}{2}\nabla_\xi H^2(\nabla u_k)\big)
    \\
    &=a_\e(H(\nabla u_k))\,\mathrm{div}\big(\tfrac{1}{2}\nabla_\xi H^2(\nabla u_k)\big)+a'_\e(H(\nabla u_k))\,\nabla H(\nabla u_k)\cdot \tfrac{1}{2}\nabla_\xi H^2(\nabla u_k)\,.\nonumber
\end{align}
From Equations \eqref{nu:gamma0} and \eqref{tr:f} we deduce that
\begin{align}
\label{ttrr}
    &\mathrm{div} [\tfrac{1}{2}\nabla_\xi H^2(\nabla u_k)]\big)\,\A_\e(\nabla u_k)\cdot\nu= a_\e(t)\,\mathrm{div} [\tfrac{1}{2}\nabla_\xi H^2(\nabla u_k)]\,\tfrac{1}{2}\nabla_\xi H^2(\nabla u_k)\cdot \nu 
    \\
    &= -f_k\,\tfrac{1}{2}\nabla_\xi H^2(\nabla u_k)\cdot \nu-a'_\e(t)\,\big[\nabla H(\nabla u_k)\cdot \tfrac{1}{2}\nabla_\xi H^2(\nabla u_k)\big]\,\big[\tfrac{1}{2}\nabla_\xi H^2(\nabla u_k)\cdot \nu\big]\nonumber
    \\
    &=-f_k\,\tfrac{1}{2}\nabla_\xi H^2(\nabla u_k)\cdot \nu+a'_\e(t)\,|\nabla H(\nabla u_k)|\big(\tfrac{1}{2}\nabla_\xi H^2(\nabla u_k)\cdot \nu \big)^2\nonumber
    \\
    &=-f_k\,\tfrac{1}{2}\nabla_\xi H^2(\nabla u_k)\cdot \nu+a'_\e(t)\,t^2\,|\nabla H(\nabla u_k)|\,\big(\nabla_\xi H(\nabla u_k)\cdot \nu \big)^2\nonumber
\end{align}
$\H^{n-1}$-a.e. on $\{H(\nabla u_k)=t\}$.
Thereby, by the chain rule and   \eqref{nu:gamma0}, the following chain holds:
\begin{align}\label{contt:1}
        \nabla [\tfrac{1}{2}\nabla_\xi H^2(\nabla u_k)]\, [\tfrac{1}{2}\nabla_\xi H^2(\nabla u_k)] \cdot \nu &=\tfrac{1}{2}\partial_{\xi_i\,\xi_l}H^2(\nabla u_k)\,\partial_{x_l x_j} u_k\,\tfrac{1}{2}\partial_{\xi_j} H^2(\nabla u_k)\,\nu_i
        \\
        & =\tfrac{1}{2}\partial_{\xi_i\,\xi_l}H^2(\nabla u_k)\,\tfrac{1}{2}\partial_{x_l} H^2(\nabla u_k)\,\nu_i\nonumber
        \\
        &=\tfrac{1}{2}\partial_{\xi_i\,\xi_l}H^2(\nabla u_k)\,H(\nabla u_k)\,\partial_{x_l} H(\nabla u_k)\,\nu_i\nonumber
        \\ \nonumber
        &=-t\,|\nabla H(\nabla u_k)|\,\tfrac{1}{2}\nabla_\xi^2 H^2(\nabla u_k)\nu\cdot \nu
    \end{align}
    $\H^{n-1}$-a.e. on $\{H(\nabla u_k)=t\}$.
From Equations \eqref{nu:gamma0}, \eqref{ttrr}, and \eqref{contt:1} one infers that 
\begin{align}\label{contt:2}
         \mathrm{div} &\,[\tfrac{1}{2}\nabla_\xi H^2(\nabla u_k)]\big)\,\A_\e(\nabla u_k)\cdot\nu- \nabla [\tfrac{1}{2}\nabla_\xi H^2(\nabla u_k)]\, [\A_\e(\nabla u_k)] \cdot \nu 
        \\
        &=-f_k\,\tfrac{1}{2}\nabla_\xi H^2(\nabla u_k)\cdot \nu+a'_\e(t)\,t^2\,|\nabla H(\nabla u_k)|\,\big(\nabla_\xi H(\nabla u_k)\cdot \nu \big)^2\nonumber
        \\
        &\hspace{2.9cm} - a_\e\big(H(\nabla u_k)\big)\,\nabla [\tfrac{1}{2}\nabla_\xi H^2(\nabla u_k)]\, [\tfrac{1}{2}\nabla_\xi H^2(\nabla u_k)] \cdot \nu\nonumber
        \\
        &=-f_k\,t\,\nabla_\xi H(\nabla u_k)\cdot \nu+a'_\e(t)\,t^2\,|\nabla H(\nabla u_k)|\,\big(\nabla_\xi H(\nabla u_k)\cdot \nu \big)^2\nonumber
        \\ \nonumber
        &\hspace{3.7cm} + a_\e(t)\,t\,|\nabla H(\nabla u_k)|\,\tfrac{1}{2}\nabla_\xi^2 H^2(\nabla u_k)\nu\cdot \nu 
    \end{align}
    $\H^{n-1}$-a.e. on $\{H(\nabla u_k)=t\}$. 
Observe that
\begin{equation*}
    \tfrac{1}{2}\nabla_\xi^2 H^2(\nabla u_k)\nu\cdot \nu=t\,\nabla_\xi^2 H(\nabla u_k )\,\nu\cdot \nu+(\nabla_\xi H(\nabla u_k)\cdot \nu)^2
\end{equation*}
$\H^{n-1}$-a.e. on $\{H(\nabla u_k)=t\}$.
The latter identity, in combination with \eqref{b_e}, \eqref{c:a} and \eqref{appr:2}, results in:
\begin{align}\label{giochi:norma}
    & a'_\e(t)\,t^2\,\big(\nabla_\xi H(\nabla u_k)\cdot \nu \big)^2+ a_\e(t)\,t\,\tfrac{1}{2}\nabla_\xi^2 H^2(\nabla u_k)\nu\cdot\nu
    \\
    &=a_\e(t)\,t\,\Big\{t\,\nabla_\xi^2 H(\nabla u_k)\nu\cdot\nu+\Big[  1+\frac{a_\e'(t)\,t}{a_\e(t)}\Big](\nabla_\xi H(\nabla u_k)\cdot \nu)^2\Big\}\nonumber
    \\
    &\geq b_\e(t)\,\Big\{t\,\nabla_\xi^2 H(\nabla u_k)\nu\cdot\nu+(  1+i_{a_\e})(\nabla_\xi H(\nabla u_k)\cdot \nu)^2\Big\}\nonumber
    \\
    &\geq \min\{1,1+i_{a_\e}\}\,b_\e(t)\,\tfrac{1}{2}\nabla^2_\xi H^2(\nabla u_k)\,\nu\cdot\nu\qquad\text{$\H^{n-1}$-a.e. on $\{H(\nabla u_k)=t\}$.}\nonumber
    \end{align}
Coupling  \eqref{contt:2} with \eqref{giochi:norma} yields:
\begin{align}\label{cont:3}
        \mathrm{div} [\tfrac{1}{2}&\,\nabla_\xi H^2(\nabla u_k)]   \,\A_\e(\nabla u_k)\cdot\nu- \nabla [\tfrac{1}{2}\nabla_\xi H^2(\nabla u_k)]\, [\A_\e(\nabla u_k)] \cdot \nu
        \\
        &\geq -t\,f_k\,\nabla_\xi H(\nabla u_k)\cdot \nu+\min\{1,1+i_{a_\e}\}\,b_\e(t)\,|\nabla H(\nabla u_k)|\,\tfrac{1}{2}\nabla^2_\xi H^2(\nabla u_k)\,\nu\cdot\nu,\nonumber
    \end{align}
$\H^{n-1}$-a.e. on $\{H(\nabla u_k)=t\}$.
\\ On the other hand, 
from Equations \eqref{gen21},
 \eqref{ag:24}, and  \eqref{lug3}, we have that 
\begin{align}\label{cont:4}
 \mathrm{div}& [\tfrac{1}{2}\nabla_\xi H^2(\nabla u_k)]\,\A_\e(\nabla u_k)\cdot\nu-  \nabla [\tfrac{1}{2}\nabla_\xi H^2(\nabla u_k)]\, [\A_\e(\nabla u_k)] \cdot \nu
\\
  & = \mathrm{div}_T [\tfrac{1}{2}\nabla_\xi H^2(\nabla u_k)]\,\A_\e(\nabla u_k)\cdot\nu  -  \nabla_T [\tfrac{1}{2}\nabla_\xi H^2(\nabla u_k)]\, [\A_\e(\nabla u_k)]_T \cdot \nu\nonumber
    \\
    &=a_\e\big( H(\nabla u_k)\big)\,H(\nu)\,H^2(\nabla u_k)\,\mathrm{tr}\,\B^H\nonumber
=b_\e\big( H(\nabla u_k)\big)\,H(\nu)\,H(\nabla u_k)\,\mathrm{tr}\,\B^H\nonumber
    \end{align}
   $\H^{n-1}$-a.e. on $\{H(\nabla u_k)=t\}$. 
Combining \eqref{cont:3}, \eqref{cont:4}, \eqref{temp:1} and \eqref{bdr:0bis} yields
\begin{align}\label{lug1}
        c\,\int_{\{H(\nabla u_k)>t\}} \frac{f_k^2}{a_\e\big( H(\nabla u_k)\big)}\,dx & \geq \frac{\l^2}{2}\,\int_{\{H(\nabla u_k)>t\}} a_\e\big( H(\nabla u_k)\big)\,|\nabla^2 u_k|^2\,dx+
        \\
       &-t\,\int_{\{H(\nabla u_k)=t\}} f_k\,\nabla_\xi H(\nabla u_k)\cdot \nu\,d\H^{n-1}\nonumber
        \\
        &+\min\{1,1+i_{a_\e}\}\,b_\e(t)\,\int_{\{H(\nabla u_k)=t\}} \frac{1}{2}\nabla^2_\xi H^2(\nabla u_k)\,\nu\cdot\nu\,|\nabla H(\nabla u_k)|\, d\H^{n-1}\nonumber
        \\
        &+\int_{ \partial \Omega\cap \{H(\nabla u_k)>t\} } b_\e\big(H(\nabla u_k)\big)\,H(\nu)\,H(\nabla u_k)\,\mathrm{tr}\,\B^H\,d\H^{n-1}\nonumber
\end{align}
for a.e. $t>0$. 
By Equation \eqref{ell:H} and the inequality $|\nabla H(\nabla u_k)\cdot \nu|\leq \sqrt{L}$, which holds thanks  to \eqref{bound:nabH1},  the inequality \eqref{lug1} implies that
\begin{align}\label{temp:uk}
        c\,\int_{\{H(\nabla u_k)>t\}} \frac{f_k^2}{a_\e\big( H(\nabla u_k)\big)}\,dx\geq & \,\l\,\min\{1,1+i_{a_\e}\}\,b_\e(t)\,\int_{\{H(\nabla u_k)=t\}} |\nabla H(\nabla u_k)|\, d\H^{n-1}
        \\
        &-\sqrt{\L}\,t\,\int_{\{H(\nabla u_k)=t\}} |f_k|\,d\H^{n-1}\nonumber
        \\
    &+\int_{
    {\partial \Omega\cap \{H(\nabla u_k)>t\}}
    %
    } b_\e\big(H(\nabla u_k)\big)\,H(\nu)\,H(\nabla u_k)\,\mathrm{tr}\,\B^H\,d\H^{n-1}\nonumber
    \end{align}
for some constant  $c=c(n,i_a,s_a,\l,\L)>0$ and  for a.e. $t>0$.
\\ Our next task consists in passing to the limit as $k\to \infty$ in \eqref{temp:uk}. Since $a_\varepsilon\in C^1([0,\infty)) $, Equations \eqref{reg:H}, \eqref{def:Ae},  \eqref{reg:ue}, and the chain rule \cite{mm} ensure that 
$$\A_\e(\nabla u_\e)\in W^{1,2}(\Omega)\cap C^0(\overline{\Omega})\quad\text{and}\quad H(\nabla u_\e)\in W^{1,2}(\Omega)\cap C^0(\overline{\Omega}).
$$
As a consequence of Equation \eqref{coarea:1}, applied with $u$ replaced with $H(\nabla u_\e)$,
\begin{equation}\label{coarea1:ue}
    -\frac{d}{dt}\int_{\{H(\nabla u_\e)>t\}}g\,|\nabla H(\nabla u_\e)|\,dx=\int_{\{H(\nabla u_\e)=t\}}g\,d\H^{n-1}\quad\text{for a.e. $t>0$.}
\end{equation}
Also, owing to \eqref{t:hn1zero} and\eqref{t:lebzero},  
\begin{equation}\label{hn1:zeroue}
    \H^{n-1}\big( \{H(\nabla u_\e)=t\}\cap \{|\nabla H(\nabla u_\e)|=0\}\big)=0\quad\text{for a.e. $t>0$}\,,
\end{equation}
and
\begin{equation}\label{leb:zero_ue}
    |\{H(\nabla u_\e)=t\}|=0\quad\text{for a.e. $t>0$.}
\end{equation}
Given $h>0$, 
we integrate the inequality \eqref{temp:uk} over $(t,t+h)$  and use the coarea formula \eqref{coarea}    to obtain:
\begin{align}\label{equa:int}
         \int_{\{t<H(\nabla u_k)<t+h\}}&\,b_\e\big(H(\nabla u_k)\big)\, |\nabla H(\nabla u_k)|^2\, dx
         \\
         \leq &\,c\,\int_t^{t+h}       \int_{\{H(\nabla u_k)>s\}} \frac{f_k^2}{a_\e\big( H(\nabla u_k)\big)}\,dx\,ds\nonumber
        \\
        &+c\,\int_{\{t<H(\nabla u_k)<t+h\}} H(\nabla u_k) |f_k|\,|\nabla H(\nabla u_k)|\,dx\nonumber
        \\
    &-c\,\int_t^{t+h}\int_{\partial \Omega\cap \{H(\nabla u_k)>s\}} b_\e\big(H(\nabla u_k)\big)\,H(\nu)\,H(\nabla u_k)\,\mathrm{tr}\,\B^H\,d\H^{n-1}\,ds\,,\nonumber
\end{align}
for some positive constant $c=c(n,i_a,s_a,\l,\L)$.
\\
Thanks to \eqref{reg:H}, \ref{appr:3} and \eqref{conv:uk}, one can deduce from \cite[Proposition 2.6, Example 3.2]{musina} that
\begin{equation}\label{convergenzaa}
    \A_\e(\nabla u_k)\xrightarrow{k\to\infty} \A_\e(\nabla u_\e)\quad\text{and} \quad H(\nabla u_k)\xrightarrow{k\to\infty} H(\nabla u_\e)\quad\text{in $W^{1,2}(\Omega)\cap C^0(\overline{\Omega})$.}
\end{equation}
In particular, $f_k\to f$ in $L^2(\Omega)$.
\\ Now, 
Equations \eqref{conv:uk}, \eqref{convergenzaa}
and the properties of $a_\e$ collected in Section \ref{sec:tutto}
enable us to pass to the limit as $k\to\infty$ in \eqref{equa:int} and deduce, via a standard variant of Lebesgue dominated convergence theorem as in \cite[Theorem 3]{Antonini}, that
\begin{align*}
         \int_{\{t<H(\nabla u_\e)<t+h\}}\,b_\e\big(H(\nabla u_\e)\big)\, |\nabla H(\nabla u_\e)&|^2\, dx 
         \leq \,c\,\int_t^{t+h}       \int_{\{H(\nabla u_\e)>s\}} \frac{f^2}{a_\e\big( H(\nabla u_\e)\big)}\,dx\,ds\nonumber
        \\
        &+c\,\int_{\{t<H(\nabla u_\e)<t+h\}} H(\nabla u_\e) |f|\,|\nabla H(\nabla u_\e)|\,dx\nonumber
        \\
    &-c\,\int_t^{t+h}\int_{\partial \Omega\cap \{H(\nabla u_\e)>s\}} b_\e\big(H(\nabla u_\e)\big)\,H(\nu)\,H(\nabla u_\e)\,\mathrm{tr}\,\B^H\,d\H^{n-1}\,ds\,,\nonumber
    \end{align*}
for a.e. $t>0$. Notice that  we have also used \eqref{leb:zero_ue} to handle the integral over $\partial \Omega$. By the coarea formula,
\begin{align*}
     \int_{\{t<H(\nabla u_\e)<t+h\}} &\,b_\e\big(H(\nabla u_\e)\big)\, |\nabla H(\nabla u_\e)|^2\, dx
     =\int_t^{t+h}\int_{\{H(\nabla u_\e)=s \}}\,b_\e\big(H(\nabla u_\e)\big)\, |\nabla H(\nabla u_\e)|\,d\H^{n-1}\,ds\,,
\end{align*}
and
\begin{align*}
    \int_{\{t<H(\nabla u_\e)<t+h\}}&\, H(\nabla u_\e) |f|\,|\nabla H(\nabla u_\e)|\,dx =\int_t^{t+h}\int_{\{ H(\nabla u_\e)=s\}}H(\nabla u_\e) |f|\,d\H^{n-1}\,ds
\end{align*}
for $t, h>0$. 
From  the last three equations one has that
\begin{align}\label{ciccia}
        \int_t^{t+h}\int_{\{H(\nabla u_\e)=s \}}&\,b_\e\big(H(\nabla u_\e)\big)\, |\nabla H(\nabla u_\e)|\,d\H^{n-1}\,ds
         \\
         \leq &\,c\,\int_t^{t+h}       \int_{\{H(\nabla u_\e)>s\}} \frac{f^2}{a_\e\big( H(\nabla u_\e)\big)}\,dx\,ds\nonumber
        \\
        &+c\,\int_t^{t+h}\int_{\{ H(\nabla u_\e)=s\} }H(\nabla u_\e) |f|\,d\H^{n-1}\,ds\nonumber
        \\
    &-c\,\int_t^{t+h}\int_{\partial \Omega\cap \{H(\nabla u_\e)>s\}} b_\e\big(H(\nabla u_\e)\big)\,H(\nu)\,H(\nabla u_\e)\,\mathrm{tr}\,\B^H\,d\H^{n-1}\,ds\,,\nonumber
\end{align}
Dividing through by $h$ in the inequality \eqref{ciccia} and the coarea formula \eqref{coarea1:ue} enable one to deduce \eqref{sempre}.

\smallskip
\noindent\textit{Step 3.} This step is devoted to estimating the terms appearing in the inequality
 \eqref{sempre}. Denote by $\mu : (0, \infty) \to [0, \infty)$, the  distribution function of $H(\nabla u_\e)$; namely, 
\begin{equation*}
    \mu(t)=|\{x\in \Omega:\,H(\nabla u_\e(x))>t\}| \quad \text{for $t>0$.}
\end{equation*}
 Equation \eqref{rel:isoper}  implies that
\begin{equation}\label{rel:isopH}
    \mu(t)^{1/n'}\leq c\,\H^{n-1}\big(\{H(\nabla u_\e)=t\} \big)\quad\text{for a.e. $t\geq H(\nabla u_\e)^*(|\Omega|/2)$,}
\end{equation}
for some constant $c=c(n,d_\Omega,\mathcal{L}_\Omega)$.
Since the function $b_\e$ is non-decreasing, and,  by \eqref{bounds:H},  $H(\nu)\leq \sqrt{\L}$,  from \eqref{equiv:BBH} and the Hardy-Littlewood inequality \eqref{HL} we infer that
\begin{align}\label{start:deom}
    \int_{\partial \Omega\cap \{H(\nabla u_\e)>t\}} &\,b_\e\big(H(\nabla u_\e)\big)\,H(\nu)\,H(\nabla u_\e)\,\mathrm{tr}\,\B^H\,d\H^{n-1}
    \\
   & \leq c\,b_\e\big(\|H(\nabla u_\e)\|_{L^\infty(\Omega)} \big)\,\|H(\nabla u_\e)\|_{L^\infty(\Omega)}\,\int_{\partial \Omega\cap \{H(\nabla u_\e)>t\}} |\B|\,d\H^{n-1}\nonumber
   \\
   &\leq c\,b_\e\big(\|H(\nabla u_\e)\|_{L^\infty(\Omega)} \big)\,\|H(\nabla u_\e)\|_{L^\infty(\Omega)}\, \int_0^{\H^{n-1}\big(\partial \Omega\cap \{H(\nabla u_\e)>t\}\big)}|\B|^*(r) dr\,\nonumber
\end{align}
for a.e. $t>0$ and for some constant $c=c(n,\l,\L,i_a,s_a)$. Here, $|\B|^*$ denotes the decreasing rearrangement of $|\B|$ with respect to $\mathcal H^{n-1}$ on $\partial \Omega$.
By \eqref{rel:isoper1}, \eqref{lug2} and \eqref{lug3},
\begin{equation}\label{nonso}
    \H^{n-1}\big(\partial \Omega\cap \{H(\nabla u_\e)>t\}\big)\leq c\,\H^{n-1}\big( \{H(\nabla u_\e)=t\}\big)\,,
\end{equation}
for a.e. $t>0$ such that $t\geq H(\nabla u_\e)^*(|\Omega|/2)$, where $c=c(n,d_\Omega, \mathcal{L}_\Omega)$.
\\ By \eqref{rel:isopH}, \eqref{nonso} and the monotonicity of the function $|\B|^{**}$, one has that
\begin{align*}
        \int_0^{\H^{n-1}\big(\partial \Omega\cap \{H(\nabla u_\e)>t\}\big)}|\B|^*(r) dr & \leq \int_0^{c\,\H^{n-1}(\{H(\nabla u_\e)=t\})}|\B|^*(r) dr
        \\
        & =c\,\H^{n-1}(\{H(\nabla u_\e)=t\})\,|\B|^{**}\big(c\,\H^{n-1}(\{H(\nabla u_\e)=t\}) \big)
        \\
        &\leq c\,\H^{n-1}(\{H(\nabla u_\e)=t\})\,|\B|^{**}\big(c'\,\mu(t)^{1/n'} \big)\quad \text{for a.e. $t>0$,}
    \end{align*}
and for some positive constants $c,c'$ depending on $n,d_\Omega,\mathcal{L}_\Omega$.
Coupling the latter inequality with \eqref{start:deom} implies that
\begin{align}\label{continua:deom}
    \int_{\partial \Omega\cap \{H(\nabla u_\e)>t\}} &\,b_\e\big(H(\nabla u_\e)\big)\,H(\nu)\,H(\nabla u_\e)\,\mathrm{tr}\,\B^H\,d\H^{n-1}
    \\
    &\leq c\,b_\e\big(\|H(\nabla u_\e)\|_{L^\infty(\Omega)} \big)\,\|H(\nabla u_\e)\|_{L^\infty(\Omega)}\, \H^{n-1}\big(\{H(\nabla u_\e)=t\}\big)\,|\B|^{**}\big(c'\,\mu(t)^{1/n'} \big)\nonumber
\end{align}
for a.e. $t>0$ and
for some positive constants $c,c'$ depending on $n,\l,\L,i_a,s_a, d_\Omega,\mathcal{L}_\Omega$.
\\ Owing
to the coarea formula \eqref{coarea1:ue},  
\begin{equation}\label{start:0}
    b_\e(t)\,\int_{\{H(\nabla u_\e)=t\}}|\nabla H(\nabla u_\e)|\, d\H^{n-1}=b_\e(t)\,\bigg\{-\frac{d}{dt}\int_{\{H(\nabla u_\e>t)\}} |\nabla H(\nabla u_\e)|^2\,dx\bigg\}
\end{equation}
for a.e. $t>0$. Moreover, 
by the monotonicity of the function $b_\e$ and the Hardy-Littlewood inequality \eqref{HL},  
\begin{align}\label{start:1}
    \int_{\{H(\nabla u_\e)
    >t\}}\frac{f^2}{a_\e\big( H(\nabla u_\e)\big)}\,dx &\leq \frac{\|H(\nabla u_\e)\|_{L^\infty(\Omega)}}{b_\e(t)}\,\int_{\{H(\nabla u_\e)
    >t\}} f^2\,dx
        \\
        &\leq \frac{\|H(\nabla u_\e)\|_{L^\infty(\Omega)}}{b_\e(t)}\,\int_0^{\mu(t)} (f^*)^2(r)\,dr\nonumber
\end{align}
for a.e. $t>0$.
Next, recall
\eqref{hn1:zeroue}, and use H\"older's inequality and the coarea formula \eqref{coarea1:ue} to deduce that
\begin{align}\label{start:2}
        \int_{\{H(\nabla u_\e)=t\}}|f|\,d\H^{n-1} & \leq \bigg(\int_{\{H(\nabla u_\e)=t\}}\frac{f^2}{|\nabla H(\nabla u_\e)|} d\H^{n-1}\bigg)^{1/2}\bigg( \int_{\{H(\nabla u_\e)=t\}}|\nabla H(\nabla u_\e)|d\H^{n-1}\bigg)^{1/2}
        \\
        &= \bigg(-\frac{d}{dt}\int_{\{H(\nabla u_\e)>t\}} f^2\,dx\bigg)^{1/2}\,\bigg(-\frac{d}{dt}\int_{\{H(\nabla u_\e)>t\}} |\nabla H(\nabla u_\e)|^2\,dx\bigg)^{1/2}\,,\nonumber
    \end{align}
for a.e. $t>0$.
Similarly, one has that
\begin{align}\label{start:3}
    \H^{n-1}(\{H(\nabla u_\e)=t\}) & \leq \bigg(\int_{\{H(\nabla u_\e)=t\}}\frac{1}{|\nabla H(\nabla u_\e)|} d\H^{n-1}\bigg)^{1/2}\bigg( \int_{\{H(\nabla u_\e)=t\}}|\nabla H(\nabla u_\e)|d\H^{n-1}\bigg)^{1/2}
    \\
    &=(-\mu'(t))^{1/2}\,\bigg(-\frac{d}{dt}\int_{\{H(\nabla u_\e)>t\}} |\nabla H(\nabla u_\e)|^2\,dx\bigg)^{1/2}\nonumber
    \end{align}
for a.e. $t>0$.
Making use of the  inequalities \eqref{continua:deom}-\eqref{start:3} and of the inequality \eqref{dis:distrib} with $v=H(\nabla u_\e)$
to estimate the various terms in  \eqref{sempre} implies that
\begin{align}\label{sempre:1}
     b_\e(t)\,\bigg\{-\frac{d}{dt}&\int_{\{H(\nabla u_\e)>t\}} |\nabla H(\nabla u_\e)|^2\,dx\bigg\} 
    \\ \nonumber
    &\leq c\,(-\mu'(t))^{1/2}\,\mu(t)^{-1/n'}\,\frac{\|H(\nabla u_\e)\|_{L^\infty(\Omega)}}{b_\e(t)}
    \\ \nonumber & \quad \quad \times \Big(\int_0^{\mu(t)} (f^*)^2(r)\,dr\Big)\,\Big(-\frac{d}{dt}\,\int_{\{H(\nabla u_\e)>t\}}|\nabla H(\nabla u_\e)|^2 dx \Big)^{1/2}\nonumber
    \\ \nonumber
    & \quad +c\,t\,\bigg(-\frac{d}{dt}\int_{\{H(\nabla u_\e)>t\}} f^2\,dx\bigg)^{1/2}\,\bigg(-\frac{d}{dt}\int_{\{H(\nabla u_\e)>t\}} |\nabla H(\nabla u_\e)|^2\,dx\bigg)^{1/2}\nonumber
    \\ \nonumber
    & \quad +c\,(-\mu'(t))^{1/2}\,b_\e\big(\|H(\nabla u_\e)\|_{L^\infty(\Omega)} \big)\,\|H(\nabla u_\e)\|_{L^\infty(\Omega)}\,|\B|^{**}\big(c'\,\mu(t)^{1/n'}\big)\\ \nonumber & \quad \quad \times \Big(-\frac{d}{dt}\int_{\{H(\nabla u_\e)>t\}}|\nabla H(\nabla u_\e)|^2 dx \Big)^{1/2}\nonumber
    \end{align}
for a.e. $t\geq H(\nabla u_\e)^*(|\Omega|/2)$ and for some constants  $c$ and $c'$, depending on $n,i_a,s_a,\l,\L,d_\Omega,\mathcal{L}_\Omega$.
\\
Dividing through in \eqref{sempre:1} by $-\frac{d}{dt}\int_{\{H(\nabla u_\e)>t\}}|\nabla H(\nabla u_\e)|^2 dx $, and the use of \eqref{dis:distrib} again with $v=H(\nabla u_\e)$ yield:
\begin{align}\label{sempre:2}
        b_\e(t)\leq &\,c\,(-\mu'(t))\,\mu(t)^{-2/n'}\,\frac{\|H(\nabla u_\e)\|_{L^\infty(\Omega)}}{b_\e(t)}\,\bigg(\int_0^{\mu(t)} (f^*)^2(r)\,dr\bigg)
        \\
        & +c\,t\,(-\mu'(t))^{1/2}\,\mu(t)^{-1/n'}\,\bigg(-\frac{d}{dt}\int_{\{H(\nabla u_\e)>t\}} f^2\,dx\bigg)^{1/2}\nonumber
        \\
        &+ c\,(-\mu'(t))\,\mu(t)^{-1/n'}\,b_\e\big(\|H(\nabla u_\e)\|_{L^\infty(\Omega)} \big)\,\|H(\nabla u_\e)\|_{L^\infty(\Omega)}\,|\B|^{**}\big(c'\,\mu(t)^{1/n'}\big)\,,\nonumber
\end{align}
for a.e. $t\geq H(\nabla u_\e)^*(|\Omega|/2)$. 
\\ Now, consider the so-called \textit{pseudo-rearrangement} of $f$ relative to $H(\nabla u_\e)$, defined as the function $\phi:(0,|\Omega|)\to [0,\infty)$ given by
\begin{equation}\label{pseudo0}
    \phi(s)=\bigg(\frac{d}{ds}\int_{\big\{H(\nabla u_\e)>H(\nabla u_\e)^*(s) \big\}}f^2 dx \bigg)^{1/2} \quad \text{for $s\in (0, |\Omega|)$}\,.
\end{equation}
By the chain rule,
\begin{equation}\label{psedo1}
    \bigg(-\frac{d}{dt}\int_{\{H(\nabla u_\e)>t\}} f^2\,dx\bigg)^{1/2}=(-\mu'(t))^{1/2}\,\phi\big( \mu(t)\big)\quad\text{for a.e. $t>0$,}
\end{equation}
and, by \cite[Proposition 3.4]{cia11},  
\begin{equation}\label{pseudo2}
    \int_0^s \phi^*(r)^2\,dr\leq \int_0^s f^*(r)^2\,dr\quad\text{for $s\in (0,|\Omega|)$.}
\end{equation} 
Multiplying through \eqref{sempre:2} by $b_\e(t)$, and using \eqref{psedo1} and the monotonicity of $b_\e$ enable one to 
to deduce that
\begin{align}\label{sempre:3}
        (b_\e(t))^2\leq &\, c\,(-\mu'(t))\mu(t)^{-2/n'}\,\|H(\nabla u_\e)\|_{L^\infty(\Omega)}\,\bigg(\int_0^{\mu(t)} (f^*)^2(r)\,dr\bigg)
        \\
        & +c\,b_\e\big(\|H(\nabla u_\e)\|_{L^\infty(\Omega)} \big)\,\|H(\nabla u_\e)\|\,(-\mu'(t))\,\mu(t)^{-1/n'}\,\phi\big(\mu(t)\big)\nonumber
        \\
        &+ c\,(-\mu'(t))\,\mu(t)^{-1/n'}\,b_\e\big(\|H(\nabla u_\e)\|_{L^\infty(\Omega)} \big)^2\,\|H(\nabla u_\e)\|_{L^\infty(\Omega)}\,|\B|^{**}\big(c'\,\mu(t)^{1/n'}\big)\nonumber
    \end{align}
for a.e. $t\in \big[H(\nabla u_\e)^*\big(|\Omega|/2\big), \|H(\nabla u_\e)\|_{L^\infty(\Omega)}\big]$. 
Next, define the function $F_\e : [0, \infty) \to [0,\infty)$ as
\begin{equation*}
    F_\e(t)=\int_0^t b_\e(s)^2\,ds\quad \text{for $t\geq 0$,}
\end{equation*}
and observe that, thanks to \eqref{b2} and \eqref{appr:2},
\begin{equation}\label{b3}
    F_\e(t)\leq b_\e^2(t)\,t\leq c\,F_\e(t)\quad \text{for $t \geq 0$,}
\end{equation}
and for some constant $c=c(i_a,s_a)$.
Fix $t_0\in \big[H(\nabla u_\e)^*(|\Omega|/2), \|H(\nabla u_\e)\|_{L^\infty(\Omega)} \big]$ to be chosen later. Since $H(\nabla u_\e)$ is a Sobolev function, then $H(\nabla u_\e)^*$ is (locally) absolutely continuous, and $H(\nabla u_\e)^*(\mu(t))=t$ for $t>0$.
An integration of \eqref{sempre:3} and  the change of variables $r=\mu(t)$ thus yield:
\begin{align}\label{sempre:4}
        F_\e\big(H(\nabla u_\e)^*(s)\big)\leq &\,F_\e(t_0)+c\,\|H(\nabla u_\e)\|_{L^\infty(\Omega)}\,\int_s^{\mu(t_0)} r^{-2/n'}\int_0^r (f^*)^2(\rho)\,d\rho\,dr
        \\
        &+ c\,b_\e\big(\|H(\nabla u_\e)\|_{L^\infty(\Omega)} \big)\,\|H(\nabla u_\e)\|\,\int_s^{\mu(t_0)}\,r^{-1/n'}\,\phi(r)\,dr\nonumber
        \\
        &+ c\,b_\e\big(\|H(\nabla u_\e)\|_{L^\infty(\Omega)} \big)^2\,\|H(\nabla u_\e)\|_{L^\infty(\Omega)}\,\int_s^{\mu(t_0)}|\B|^{**}\big(c'\,r^{1/n'}\big)\,r^{-1/n'}\,dr\nonumber
        \\
        \leq &\,F_\e(t_0)+c\,\|H(\nabla u_\e)\|_{L^\infty(\Omega)}\,\int_0^{\mu(t_0)} r^{-2/n'}\int_0^r (f^*)^2(\rho)\,d\rho\,dr\nonumber
        \\
        &+ c\,b_\e\big(\|H(\nabla u_\e)\|_{L^\infty(\Omega)} \big)\,\|H(\nabla u_\e)\|\,\int_0^{\mu(t_0)}\,r^{-1/n'}\,\phi(r)\,dr\nonumber
        \\
        &+\hat{c}\,F_\e\big( \|H(\nabla u_\e)\|_{L^\infty(\Omega)}\big)\,\int_0^{\mu(t_0)^{1/n'}}|\B|^{**}(c'\,r)\,r^{\frac{1}{n-1}}\,\frac{dr}{r}\quad\text{for $s\in [0,\mu(t_0))$,}\nonumber
    \end{align}
    for some constants $c,c',\hat{c}$ depending on $n,i_a,s_a,\l,\L,d_\Omega,\mathcal{L}_\Omega$.
Note that the last inequality also relies on  \eqref{b3}. 
The assumption $\partial \Omega\in W^2L^{(n-1,1)}$ entails that
\begin{equation}
    \|\B\|_{L^{(n-1,1)}(\partial \Omega)}=\int_0^{\H^{n-1}(\partial \Omega)} |\B|^{**}(r)\,r^{\frac{1}{n-1}}\,\frac{dr}{r}<\infty\,.
\end{equation}
 Thus, the function $G: (0, |\Omega|) \to [0, \infty)$, defined as
\begin{equation}\label{G:s}
    G(s)=\int_0^{s^{1/n'}}|\B|^{**}(c'\,r)\,r^{\frac{1}{n-1}}\,\frac{dr}{r} \quad \text{for $s\in (0, |\Omega|)$},
\end{equation}
 is  strictly increasing  and $G(s)\to 0$ as $s\to 0^+$. Therefore, there exists $s_0=s_0(n,\Omega)$ such that
\begin{equation}\label{stima:Gs0}
    G(s_0)\leq \frac{1}{2\,\hat{c}}\quad\text{and}\quad 
 s_0\leq \frac{|\Omega|}{2}.
 \end{equation}  
By setting $s=0$ and
\begin{equation}\label{def:t0} 
    t_0=H(\nabla u_\e)^*(s_0)\,.
\end{equation}
 in \eqref{sempre:4},  we deduce that
\begin{align}\label{sempre:5}
    F_\e\big( \|H(\nabla u_\e)\|_{L^\infty(\Omega)}\big)\leq &\,2\,F_\e(t_0)+2c\,\|H(\nabla u_\e)\|_{L^\infty(\Omega)}\,\int_0^{\mu(t_0)} r^{-2/n'}\int_0^r (f^*)^2(\rho)\,d\rho\,dr
    \\
    &+2c\,b_\e\big(\|H(\nabla u_\e)\|_{L^\infty(\Omega)} \big)\,\|H(\nabla u_\e)\|\,\int_0^{\mu(t_0)}\,r^{-1/n'}\,\phi(r)\,dr\,.\nonumber
\end{align}
Here, we have made use of the fact that $\|H(\nabla u_\e)\|_{L^\infty(\Omega)}=H(\nabla u_\e)^*(0)$
 By the Hardy-Littlewood inequality, the monotonicity of $\phi^*$ and \eqref{pseudo2}, 
\begin{align}\label{di:mezzo}
    \int_0^{|\Omega|}r^{-1/n'}\phi(r)\,dr&\,\leq\int_0^{|\Omega|}r^{-1/n'}\,\phi^*(r)dr=\int_0^{|\Omega|}r^{-1/n'}\big(\phi^*(r)^2\big)^{1/2}\,dr
    \\
    &\,\leq \int_0^{|\Omega|}r^{-1/n'}\Big(\frac{1}{r}\int_0^r\phi^*(s)^2\,ds  \Big)^{1/2}\,dr\nonumber
     \leq \int_0^{|\Omega|}r^{-1/n'}\Big(\frac{1}{r}\int_0^r f^*(s)^2\,ds  \Big)^{1/2}\,dr\nonumber
    \\
    &\,=\int_0^{|\Omega|}r^{-1/n'} \big[(f^2)^{**}(r)\big]^{1/2}\,dr.\nonumber
    \end{align}
    Assume first that $n\geq 3$.
An application of the Hardy-type inequality \cite[Theorem 4.1 (ii)]{carro} (see also \cite[Theorem 10.3.12 (ii)]{pick}) yields:
\begin{equation*}
    \int_0^{|\Omega|}r^{-1/n'} \big[(f^2)^{**}(r)\big]^{1/2}\,dr\leq c\,\int_0^{|\Omega|}r^{-1/n'} \big[(f^2)^*(r)\big]^{1/2}\,dr=c\,\int_0^{|\Omega|}r^{-1/n'} f^*(r)\,dr=c\,\|f\|_{L^{n,1}(\Omega)}
\end{equation*}
for some constant $c=c(n)$,
whence, by \eqref{di:mezzo},
\begin{equation}\label{pseudo:4}
\int_0^{|\Omega|}r^{-1/n'}\phi(r)\,dr\leq c\,\|f\|_{L^{n,1}(\Omega)}\,.
\end{equation}
On the other hand, by Fubini's Theorem,
\begin{align*}
    \int_0^{|\Omega|} r^{-2/n'}\int_0^r (f^*)^2(\rho)\,d\rho\,dr& =\int_0^{|\Omega|}(f^*)^2(\rho)\int_\rho^{|\Omega|} r^{-2/n'}dr\,d\rho
    =\frac{1}{2/n'-1}\int_0^{|\Omega|}(f^*)^2(\rho)\,\rho^{1-2/n'}\,d\rho\,.
\end{align*}
Owing to \cite[Theorem 3.1 (i)]{carro} (see also \cite[Theorem 10.3.8 (i)]{pick}),   
\begin{equation*}
    \Big(\int_0^{|\Omega|}(f^*)^2(\rho)\,\rho^{1-2/n'}\,d\rho\Big)^{1/2}\leq c\,\int_0^{|\Omega|} f^*(\rho)\,\rho^{-1/n'}\,d\rho=\|f\|_{L^{n,1}(\Omega)}
\end{equation*}
for some constant $c=c(n)$.
Combining the last two inequalities tells us that
\begin{equation}\label{pseudo:5}
    \int_0^{|\Omega|} r^{-2/n'}\int_0^r (f^*)^2(\rho)\,d\rho\,dr\leq c\,\|f\|_{L^{n,1}(\Omega)}^2
\end{equation}
for some constant $c=c(n)$.
\\ Assume now that
$n=2$. The  inequality \eqref{di:mezzo} reads:
\begin{equation}\label{pseudo:6}
\int_0^{|\Omega|}r^{-1/2}\phi(r)\,dr\leq \int_0^{|\Omega|}r^{-1/2} \big[(f^2)^{**}(r)\big]^{1/2}\,dr=\|f\|_{ (L^2)^{(1,\frac{1}{2})}(\Omega)}\,.
\end{equation}
Moreover, by Fubini's Theorem
\begin{align*}
\int_0^{|\Omega}r^{-1}\,\int_0^r f^*(\rho)^2\,d\rho\,dr
    =\int_0^{|\Omega|}f^*(s)^2\,\int_s^{|\Omega|}\frac{dr}{r}ds=\int_0^{|\Omega|}(f^2)^*(s)\,\log\Big(\frac{|\Omega|}{s} \Big)\,ds.
\end{align*}
Owing to \cite[Theorem 10.3.17 (i)]{pick},  
\begin{equation*}
    \int_0^{|\Omega|}(f^2)^*(s)\,\log\Big(\frac{|\Omega|}{s} \Big)\,ds\leq c\,\bigg(\int_0^{|\Omega|}r^{-1/2} \big[(f^2)^{**}(r)\big]^{1/2}\,dr\bigg)^2=c\,\|f\|^2_{ (L^2)^{(1,\frac{1}{2})}(\Omega)}\,,
\end{equation*}
for some absolute constant $c$,
and hence
\begin{equation}\label{pseudo:7}
\int_0^{|\Omega}r^{-1}\,\int_0^r f^*(\rho)^2\,d\rho\,dr\leq c\,\|f\|^2_{ (L^2)^{(1,\frac{1}{2})}(\Omega)}.
\end{equation}
 The use of the inequalities \eqref{pseudo:4}--\eqref{pseudo:5} or \eqref{pseudo:6}--\eqref{pseudo:7}, according to whether $n\geq 3$ or $n=2$, in   \eqref{sempre:5} tells us that
\begin{align*}
        F_\e\big( \|H(\nabla u_\e)\|_{L^\infty(\Omega)}\big)\leq 2\,F_\e(t_0)&+c\,b_\e\big(\|H(\nabla u_\e)\|_{L^\infty(\Omega)} \big)\,\|H(\nabla u_\e)\|\,\|f\|_{X_n(\Omega)}
       +c\,\|H(\nabla u_\e)\|_{L^\infty(\Omega)}\,\|f\|^2_{ X_n(\Omega)}
\end{align*}
for some constant  $c=c( n,i_a,s_a,\l,\L,d_\Omega,\mathcal{L}_\Omega)$.
Dividing both sides of the latter inequality by $\|H(\nabla u_\e)\|_{L^\infty(\Omega)}$ and using \eqref{b3} and the monotonicity of $b_\e$ enable one to obtain:
\begin{equation}\label{sempre:6}
    b_\e\big( \|H(\nabla u_\e)\|_{L^\infty(\Omega)}\big)^2\leq 2\,b_\e(t_0)^2+c\,b_\e\big(\|H(\nabla u_\e)\|_{L^\infty(\Omega)} \big)\,\|f\|_{X_n(\Omega)}+c\,\|f\|^2_{X_n(\Omega)}\,.
\end{equation}
Hence, by Young's inequality, 
\begin{equation}\label{ci:siamo}
    b_\e\big( \|H(\nabla u_\e)\|_{L^\infty(\Omega)}\big)\leq c\,b_\e(t_0)+c\,\|f\|_{\mathbb{X}_{rhs}}
\end{equation}
for some constant $c=c(n,i_a,s_a,\l,\L,d_\Omega,\mathcal{L}_\Omega)$.
\\ Let $\beta_\e,\psi_\e:[0,\infty)\to [0,\infty)$ be the functions defined by
\begin{equation*}
\beta_\e(t)=b_\e(t)\,t\quad\text{and}\quad \psi_\e(t)=s\,b_\e^{-1}(t) \quad \text{for $t \geq 0$.}
\end{equation*}
Thanks to \eqref{appr:2} and  \eqref{b1},  
\begin{align}\label{beta:psi}
    \tfrac 1c\,\beta_\e(t)&\leq B_\e(t)\leq c\,\beta_\e(t)
    \\
    \tfrac 1c\,\psi_\e(s)&\leq \widetilde{B}_\e(s)\leq c\,\psi_\e(s)\,,\nonumber
\end{align}
for some constant $c=c(i_a,s_a)>1$.
\\
Hence, from \eqref{appr:2}, the embedding $X_n(\Omega) \to L^n(\Omega)$, 
%
and the energy estimate \eqref{stima:energiadir} with $B$ and $u$ replaced with $B_\e$ and $u_\e$ we deduce that
\begin{align}c\,\psi_\e(c\|f\|_{X_n(\Omega)}&)\geq \int_\Omega B_\e\big( H(\nabla u_\e)\big)\geq \int_{\{H(\nabla u_\e)>t_0\}} B_\e\big( H(\nabla u_\e)\big)\,dx
      \geq B_\e(t_0)\,\mu(t_0)\geq c'\,\beta_\e(t_0)\,s_0,\nonumber
\end{align}
for some positive constants $c=c(n,\l,\L,i_a,s_a,d_\Omega,\mathcal{L}_\Omega)$ and $c'=c'(i_a,s_a)$.
Observe that the first and the last inequalities hold thanks to \eqref{beta:psi} and \eqref{def:t0}, respectively. Therefore, owing to \eqref{beta:psi}, \eqref{appr:2}, and \eqref{Btildedelta2} with $\widetilde{B}_\e$ in place of $\widetilde{B}$,   we deduce that
\begin{equation}\label{eq:remark}
\beta_\e(t_0)\leq \frac{c_0}{s_0}\,\psi_\e(c_0\|f\|_{X_{n}})\leq \psi_\e(c_1\|f\|_{X_n(\Omega)})\,,
\end{equation}
for some positive constants $c_0(n,\l,\L,i_a,s_a,d_\Omega,\mathcal{L}_\Omega)$ and  $c_1=\big(n,\l,\L,i_a,s_a,d_\Omega,\mathcal{L}_\Omega,s_0(n,\Omega)\big)$.
\\
Since $b_\e\big( \beta_\e^{-1}(\psi_\e(s))\big)=s$ for $s\geq 0$,  the latter inequality implies that
$$
b_\e(t_0)\leq c_1\,\|f\|_{X_n(\Omega)},
$$
and hence, by
\eqref{ci:siamo},  
\begin{equation}\label{ci:siamoeps}
    b_\e\big(\|H(\nabla u_\e)\|_{L^\infty(\Omega)}\big)\leq
c_2\,\|f\|_{X_n(\Omega)}\,,
\end{equation}
where $c_2\big(n,\l,\L,i_a,s_a,d_\Omega,\mathcal{L}_\Omega,s_0(n,\Omega)\big)$.
Passing to the limit as  $\e\to 0^+$ in \eqref{ci:siamoeps} and making use of 
\eqref{bounds:H}, \eqref{cortona10}, and \eqref{conv:C1u} yield 
\begin{equation}\label{ci:siamoeps'}
    b\big(\|H(\nabla u)\|_{L^\infty(\Omega)}\big)\leq
c_2\,\|f\|_{X_n(\Omega)}.
\end{equation}

\smallskip
\par
\noindent \textit{Step 4.} Here, we keep the assumption \eqref{fcinf} in force and remove \eqref{deom:inf}.  By \cite[Theorem 1]{Antonini}, there exists a sequence of $C^\infty$-smooth domains $\{\Omega_m\}$ such that
\begin{equation}\label{conv:Omm}
    \Omega\subset\subset \Omega_m,\quad \lim_{m\to\infty}|\Omega_m\setminus\Omega|=0,\quad \lim_{m\to\infty}\mathrm{dist}_{\H}(\Omega_m,\Omega)=0,\quad d_{\Omega_m}\leq 2\,d_\Omega\,,
\end{equation}
for $m\in \N$, where $\mathrm{dist}_{\H}(\Omega_m,\Omega)$ denotes the Hausdorff distance between $\Omega_m$ and $\Omega$. Moreover, their Lipschitz characteristics $\mathcal{L}_{\Omega_m}=(L_{\Omega_m},R_{\Omega_m})$ satisfy:
\begin{equation}\label{lipca:Omm}
    L_{\Omega_m}\leq c(1+L_\Omega^2)\quad\text{and}\quad R_{\Omega_m}\geq \frac{R_\Omega}{c(1+L_\Omega^2)}\quad\text{for $m\in \N$,}
\end{equation}
for some constant $c=c(n)$.
Let $u_m\in W^{1,B}(\Omega_m)$ be the unique solution to the Dirichlet problem
\begin{equation}\label{eq:dirOmm}
    \begin{cases}
        -\mathrm{div}\big( \A(\nabla u_m)\big)=f\quad&\text{in $\Omega_m$}
        \\
        u_m=0\quad &\text{on $\partial \Omega_m$.}
    \end{cases}
\end{equation}
From  {\eqref{ci:siamoeps'}} of Step 3,  we have that
\begin{equation}\label{Lip:um}
   {  b} \big(\|\nabla u_m\|_{L^\infty(\Omega)}\big)\leq c_2\,\|f\|_{X_n(\Omega)}\,,
\end{equation}
for some constant $c_2=c_2(n,\l,\L,i_a,s_a,d_{\Omega_m},\mathcal{L}_{\Omega_m},s_0(n,\Omega_m))$. Here $s_0(n,\Omega_m)$ is the constant satisfying \eqref{stima:Gs0} with $\Omega$ replaced with $\Omega_m$, i.e., 
\begin{equation}\label{stima:Gsm0}
    G_m(s_0)\leq \frac{1}{2\hat{c}_m}\quad \text{and} \quad s_0\leq \frac{|\Omega_m|}{2}\,,
\end{equation}
where 
\begin{equation*}
    G_m(s)=\int_0^{s^{1/n'}}|\B_m|^{**}(c'\,r)\,r^{\frac{1}{n-1}}\frac{dr}{r}\,,
\end{equation*}
and $\hat{c}_m=\hat{c}_m(n,\l,\L,\mathcal{L}_{\Omega_m},d_{\Omega_m})$ is the constant appearing in \eqref{sempre:4} with $\Omega$ replaced with $\Omega_m$. Moreover, we have set $\B_m=\B_{\Omega_m}$,  the second fundamental form of $\partial \Omega_m$.
\\
We claim that \begin{equation}\label{goal:c2}
    c_2=c_2(n,\lambda,\Lambda,i_a,s_a,\Omega),
\end{equation}
namely, $c_2$ can be chosen so that it is independent of $m$.
To verify this claim, notice first of all that  $c_2$ depends on $d_{\Omega_m}$ and $L_{\Omega_m}$ only through an upper bound, and on $R_{\Omega_m}$ and $|\Omega_m|$ through a lower bound. Thus, owing to \eqref{conv:Omm} and \eqref{lipca:Omm}, we have $c_2=c_2(n,\lambda,\Lambda,i_a,s_a,d_\Omega,\mathcal{L}_\Omega,s_0(n,\Omega_m))$.
It remains to prove that one can choose $s_0=s_0(n,\Omega)$ which  makes \eqref{stima:Gsm0} true for all $m\in \N$.
\\
A close inspection of the proof of Step 3 reveals that the constant $\hat c_m$ in \eqref{stima:Gsm0}
depends on $d_{\Omega_m}$ and $L_{\Omega_m}$ via an upper bound, and on $R_{\Omega_m}$ and $|\Omega_m|$ via a lower bound. Therefore, by    \eqref{conv:Omm} and \eqref{lipca:Omm},
\begin{equation}\label{cc:hat}
    \hat{c}_m\leq \hat{c}(n,\l,\L,\mathcal{L}_{\Omega},d_{\Omega}).
\end{equation}
Assume, for a moment, that we know that
\begin{equation}\label{claim:Gs}
    G_m(s)\leq c\,G(c\,s)+c\,s^{1/n}\,\quad \text{for $s>0$,}
\end{equation}
for some constant $c=c(n,\mathcal{L}_\Omega,d_\Omega)$. 
Thus, by choosing $s_0=s_0(n,\Omega)$ such that
\begin{equation}\label{G:fin}
    c\,G(cs_0)+cs_0^{1/n}\leq \frac{1}{2\hat{c}}\quad \text{and} \quad s_0\leq \frac{|\Omega|}{2},
\end{equation}
 Equation \eqref{stima:Gsm0} follows from \eqref{claim:Gs}--\eqref{G:fin}, and the fact that $|\Omega|\leq |\Omega_m|$ by \eqref{conv:Omm}.
\\ With Equations \eqref{stima:Gsm0} and \eqref{lipca:Omm} at our disposal, the same arguments as in Step 3 imply that Equation \eqref{Lip:um} holds  with a constant $c_2=c_2(n,\l,\L,i_a,s_a,\Omega)$, and hence independent of $m$.
\\ 
Our task is now to prove Equation \eqref{claim:Gs}. Recall that by \cite[Theorem 1]{Antonini}, the boundaries $\partial\Omega_m$ share the same coordinate cylinders as $\partial \Omega$ -- see Figure 1.
\begin{figure}[ht]\label{fig}
\centering
\includegraphics[width=0.7\textwidth]{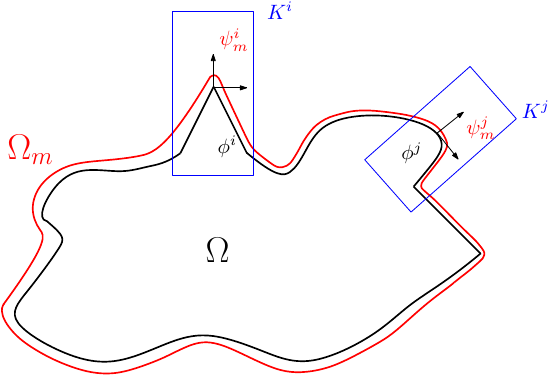}
\caption{}
\end{figure}
We denote 
 by  $\{K^i\}_{i=1}^N$ the family of  these coordinate cylinders, and by $\phi^i, \psi^i_m$ the functions locally describing the boundaries of $\Omega$ and $\Omega_m$, respectively. Hence, we have that
  \begin{equation}\label{de:subset}
  \partial \Omega\cup \partial \Omega_m\subset \bigcup_{i=1}^N K^i,
  \end{equation}
  and the cardinality $N$ of such coordinate cylinders satisfies  $N\leq c(n)\Big(\frac{d_\Omega}{R_\Omega}\Big)^n$.
\\
Furthermore, owing to \cite[Formula (6.48)]{Antonini},  we have that
\begin{equation}\label{psi:m}
    |\nabla^2\psi^i_m|\leq c\,\Big(\sum_{j\in\mathcal{I}_i} |\nabla^2\phi^j_m|\circ\C^{i,j}_m+1\Big)\quad \text{for $i=1,\dots, N$,}
\end{equation}
 where $\mathcal{I}_i$ is a finite set of indices, with cardinality $\mathrm{card}\,\mathcal{I}_i\leq N$. In Equation \eqref{psi:m}, the functions $\phi^j_m$  are defined as the convolution $\phi^j_m=\phi^j\ast \rho_m$ in $\R^{n-1}$, and $\C^{i,j}_m$ are a family of  bi-Lipschitz transformations, with Lipschitz constants depending on $n,d_\Omega,\mathcal{L}_\Omega$ -- see \cite[formula (6.43)]{Antonini}. This information coupled with \eqref{sub:add} yields
\begin{equation}\label{uuuh}
    |\nabla^2\psi^i_m|^{**}(r)\leq c\,
   \Big(\sum_{j\in 
   \mathcal{I}_i}|\nabla^2 \phi^j_m|^{**}(c\,r)+1\Big) \quad \text{for $r>0$,}
\end{equation}
for some constant $c=c(n,d_\Omega,\mathcal{L}_\Omega)$.
We denote by $\B|_{K_i},\B_m|_{K_i}$ the restrictions to $K^i$ of the second fundamental forms of $\partial\Omega$ and $\partial\Omega_m$, respectively. Then, owing to  \cite[Equation (2.10)]{Antonini}, \eqref{lipca:Omm} and basic properties of rearrangements, we have
\begin{align}\label{B:rearr}
   c(n,L_\Omega)\,|\nabla^2 \phi^i|^{**}\leq  |\B|_{K^i}|^{**}\leq |\nabla^2 \phi^i|^{**}
   \quad\text{and}\quad
    c(n,L_\Omega)\,|\nabla^2 \psi^i_m|^{**}\leq  |\B_m|_{K^i}|^{**}\leq |\nabla^2 \psi_m^i|^{**}
\end{align}
for  $i=1,\dots,N$ and $m\in \N$.
Equations  \eqref{de:subset}, \eqref{uuuh} and \eqref{B:rearr} entail that
\begin{align}\label{clao}
        G_m(s) & \leq c\,\sum_{i=1}^N\int_0^{s^{1/n'}}\big|\B_m|_{K^i}\big|^{**}(c\,r)\,r^{\frac{1}{n-1}}\frac{dr}{r}  \leq c'\,\sum_{i=1}^N\int_0^{s^{1/n'}}\big|\nabla^2\psi^i_m\big|^{**}(c'\,r)\,r^{\frac{1}{n-1}}\frac{dr}{r}
        \\
        &\leq c''\,\sum_{i=1}^N \sum_{j\in \mathcal{I}_i}\int_0^{s^{1/n'}}|\nabla^2 \phi_m^j|^{**}(c''r)r^{\frac{1}{n-1}}\frac{dr}{r}+c''\,\int_0^{s^{1/n'}}r^{\frac{1}{n-1}}\frac{dr}{r}\nonumber
        \\
        &=c''\,\sum_{i=1}^N \sum_{j\in \mathcal{I}_i}\int_0^{s^{1/n'}}\sum_{j\in \mathcal{I}_i}|\nabla^2 \phi_m^j|^{**}(c''r)r^{\frac{1}{n-1}}\frac{dr}{r}+c'''\,s^{1/n}\nonumber \quad \text{for $s>0$,}
\end{align}
for suitable constants $c,c',c'', c'''$ depending on $n,\mathcal{L}_\Omega,d_\Omega$.
\\ The property \eqref{convol:star1}
%
guarantees that
\begin{equation}\label{convol:star}
   |\nabla ^2\phi_m^j|^{**}(r)=  |\nabla ^2\phi^j\ast\rho_m|^{**}(r)\leq c(n) (|\nabla ^2\phi^j|\ast\rho_m)^{**}(r)\leq
   c'(n)\,|\nabla^2\phi^j|^{**}(r)\quad\text{for $r>0$,}
\end{equation}
for $j=1, \dots , N$, and $m\in \N$. Hence,
from  \eqref{B:rearr} -- \eqref{convol:star} we deduce that
\begin{align}
    G_m(s)& \leq c\,\sum_{i=1}^N \sum_{j\in \mathcal{I}_i}\int_0^{s^{1/n'}}|\nabla^2 \phi^j|^{**}(cr)r^{\frac{1}{n-1}}\frac{dr}{r}+c\,s^{1/n}
     \leq c'\,\sum_{i=1}^N \sum_{j\in \mathcal{I}_i}\int_0^{s^{1/n'}} |\B|_{K^j}|^{**}(c'r)r^{\frac{1}{n-1}}\frac{dr}{r}+c\,s^{1/n}\nonumber
    \\
    &\leq c''\,N^2\,\int_0^{s^{1/n'}}|\B|^{**}(c'r)r^{\frac{1}{n-1}}\frac{dr}{r}+C\,s^{1/n}\nonumber 
    =c''\,G(c''\,s)+c\,s^{1/n}\nonumber \qquad \text{for $s>0$,}
\end{align}
for some constants $c, c', c''$ depending on $n,d_\Omega,\mathcal{L}_\Omega$. The inequality \eqref{claim:Gs} is thus established.
\\
 Now let $B_R$ be a ball such that  $\Omega\subset \Omega_m\subset B_R$, and consider the  extension of $u_m$ (still denoted  by $u_m$) by $0$ on $B_R\setminus \Omega$. 
Thanks to \eqref{Lip:um}, the sequence $\{u_m\}$ is bounded in $W^{1,\infty}(B_R)$ and, up to a subsequence, we have
\begin{equation}\label{pap_m2}
    u_m\to v\quad\text{weakly-* in $W^{1,\infty}(B_R)$\quad  and}\quad u_m\to v\quad\text{a.e. in $B_R$,}
\end{equation}
for some function $v\in W^{1,\infty}(B_R)$.
\\
Also, by \eqref{mon:updown}, we can apply \cite[Theorem 1.7]{lieb91} to infer that, for any open set $\Omega'\Subset \Omega$, one has  $\|u_m\|_{C^{1,\theta}(\Omega')}\leq c$ for some constants $\theta\in (0,1)$ and $c>0$ independent of $m$. Therefore, up to a subsequence,
\begin{equation}\label{pap_mm2}
    u_m\to v\quad\text{in $C^1_{loc}(\Omega)$.}
\end{equation}
It remains to show that $v=u$. Let $x\in B_R\setminus \Omega$. Then, thanks to \eqref{conv:Omm}, $x\in B_R\setminus \Omega_m$ if $m$ is large enough. Since $u_m=0$ in $B_R\setminus \Omega_m$, by \eqref{pap_m2} it follows that $v(x)=0$ in $B_R\setminus \Omega$, and in particular $v\in W^{1,\infty}_0(\Omega)$.
Consider a test function $\varphi\in C^\infty_c(\Omega)$ in the weak formulation \eqref{eq:dirOmm}, and
\\
Letting $m\to \infty$ in the weak formulation of the problem \eqref{eq:dirOmm}
shows, via \eqref{pap_m2} -- \eqref{pap_mm2}, that $v$ is a solution to \eqref{eq:dir2}. Hence, $u=v$. The lower semicontinuity of the norm and \eqref{Lip:um} yield
\begin{equation*}
   {  b }\big(\|\nabla u\|_{L^\infty(\Omega)}\big)\leq \liminf_{m\to\infty}  {  b }\big(\|\nabla u_m\|_{L^\infty(\Omega)}\big)\leq c_2\,\|f\|_{X_n(\Omega)}\,.
\end{equation*}

\smallskip
\noindent \textit{Step 5.} Here, we remove the assumption \eqref{fcinf}. For any $k\in \N$, let
$$\overline{f}_k(x) = 
\begin{cases}
    f(x)\quad & \mathrm{dist}(x,\partial \Omega)>\frac{1}{k}

    \\
    0\quad &\mathrm{dist}(x,\partial \Omega)\leq\frac{1}{k}\,,
\end{cases}
$$
and let 
$$
f_k=\overline{f}_k\ast\rho_{1/(2k)}\,,
$$
where $\rho_{1/(2k)}$ is the standard, radially symmetric convolution kernel of parameter $1/(2k)$.
It follows that $f_k\in C^\infty_c(\Omega)$ and  
\begin{equation}\label{fk:conv}
    f_k\to f\quad\text{in $L^n(\Omega)$.}
\end{equation}
Moreover, thanks to the  properties \eqref{convol:star1} and \eqref{oct100},  
\begin{equation}\label{fk:conv1}
    \|f_k\|_{X_n(\Omega)}\leq \|f\|_{X_n(\Omega)}\,.
\end{equation}
Let $u_k\in W^{1,B}(\Omega)$ be the solution to 
\begin{equation}\label{eq:diruk}
\begin{cases}
        -\mathrm{div}\big(\A(\nabla u_k) \big)=f_k &\quad\text{in } \Omega
        \\
        u_k=0 &\quad\text{on }\partial \Omega \,.
\end{cases}
\end{equation}
From the previous step and \eqref{fk:conv}-\eqref{fk:conv1}, we have that
\begin{equation}\label{aaah}
    {  b}\big(\|\nabla u_k\|_{L^\infty(\Omega)}\big)\leq c_0\|f\|_{X_n(\Omega)}
\end{equation}
for some constant $c_0=c_0(n,\lambda,\Lambda,i_a,s_a,\Omega)$, and hence independent of $k$. Therefore, up to  subsequences,
\begin{equation}\label{uk:debole}
    u_k\to v\quad\text{weakly-* in $W^{1,\infty}_0(\Omega)$.}
\end{equation}
  We  claim that
\begin{equation}\label{gr:conv1}
    \nabla u_k\to \nabla v\quad\text{a.e. in $\Omega$,}
\end{equation}
 again up to subsequences.
Set $\tau_0={  b^{-1}}\big(c_0\,\|f\|_{X_n(\Omega)}\big)$. Given $t>0$, owing to \eqref{aaah} we have $|\{|\nabla u_k|>\tau_0\}|=0$ for all $k\in \N$. Consequently, 
\begin{equation}
\begin{split}
    |\{ &|\nabla u_k-\nabla u_l|>t\}= |\{|\nabla u_k-\nabla u_l|>t, \,|\nabla u_k|\leq \tau_0,\,|\nabla u_l|\leq \tau_0\}|.
    \end{split}
\end{equation}
for $k,l\in \N$. Set
\begin{equation*}
    \vartheta=\inf\{[\A(\xi)-\A(\eta)]\cdot (\xi-\eta):\,|\xi-\tau|>t,\,  |\xi|\leq \tau_0,\,|\eta|\leq \tau_0 \},
\end{equation*}
and observe that $\vartheta>0$ by \eqref{monot:A}. Making use of $u_k-u_l$ as a test function in the weak formulation of \eqref{eq:diruk} and of its analogue with $k$ replaced with $l$ and subtracting the resultant equations tell us that
\begin{align}\label{dim:ae}
    \vartheta\,&|\{|\nabla u_k-\nabla u_l|>t, \,|\nabla u_k|\leq \tau_0,\,|\nabla u_l|\leq \tau_0\}|
    \\
    &\leq \int_\Omega \big(\A(\nabla u_k)-\A(\nabla u_l)\big)\cdot (\nabla u_k-\nabla u_l)\,dx \nonumber
    \\
    &=\int_\Omega (f_k-f_l)\,(u_k-u_l)\,dx\leq \|u_k-u_l\|_{L^{n'}(\Omega)}\,\|f_k-f_l\|_{L^n(\Omega)} \nonumber
    \\
    &\leq c\,\|\nabla u_k-\nabla u_l\|_{L^1(\Omega)}\,\|f_k-f_l\|_{L^n(\Omega)} \nonumber
    \\
    &\leq 2\,c'\,\tau_0\,\|f_k-f_l\|_{L^n(\Omega)} \nonumber
\end{align}
for some constants $c,c'=c,c'(n,\Omega)$. Equations \eqref{fk:conv} and \eqref{dim:ae} imply that $\{\nabla u_k\}$ is a Cauchy sequence in measure. From this piece of information and \eqref{uk:debole} we deduce \eqref{gr:conv1}.
Passing to the limit as $k\to\infty$ in the weak formulation of the problem \eqref{eq:diruk} and making use of  Equations \eqref{fk:conv} and \eqref{gr:conv1} ensure that $v$ solves the Dirichlet problem \eqref{eq:dir2}.  Hence $v=u$, and the  estimate \eqref{stima:dirichlet} follows from \eqref{aaah}, \eqref{Btildedelta2} and \eqref{b1}, via  semicontinuity.
\end{proof}

\section{Proof of Theorem \ref{thm:dirneu}, Neumann problems}

The outline of the proof of 
Theorem \ref{thm:dirneu} for solutions to Neumann problems is the same as for Dirichlet problems. The necessary modifications are described in this section.

\medskip \par
\noindent
\begin{proof}[Proof of  Theorem \ref{thm:dirneu},  Neumann problems] 
\textit{Step 1.}  Assume that the function $f$ and the domain $\Omega$ satisfy the assumptions \eqref{fcinf} and \eqref{deom:inf}, respectively. Let 
$\mathcal A_\e$ be defined as in \eqref{a_e} for $\e \in (0,1)$ and let 
$u_\e\in W^{1,2}_\perp(\Omega)$ be the weak solution to the Neumann problem
\begin{equation}\label{eq:ueneum}
    \begin{cases}
        -\mathrm{div}\big( \A_\e(\nabla u_\e)\big)=f\quad& \text{in $\Omega$}
        \\
        \A_\e(\nabla u_\e)\cdot \nu=0\quad& \text{on $\partial\Omega$.}
    \end{cases}
\end{equation}
We claim that 
\begin{equation}\label{reg:ueneum}
    u_\e\in C^{1,\beta}(\overline{\Omega})\cap W^{2,2}(\Omega)\,,
\end{equation}
for some $\beta\in (0,1)$ independent on $\e$, and
\begin{equation}\label{conv:C1neu}
   \nabla u_\e\to \nabla u\quad\text{in $C^0(\overline{\Omega})$}\,.
\end{equation}
First, an application of \cite[Theorem 3.1 (a)]{cia90} ensures that, after normalizing   $u_\e$  by suitably additive constants depending on $\e$, we have that
\begin{equation}
    \|u_\e\|_{L^\infty(\Omega)}\leq c\,,
\end{equation}
for some constant $c$ independent on $\e$. Then, from \cite[Theorem 1.7 and subsequent remarks]{lieb91} (see also \cite[Theorem 2]{lieb88} and \cite{Antonini1}) we deduce that
\begin{equation}
    \|u_\e\|_{C^{1,\beta}(\overline{\Omega})}\leq c\,,
\end{equation}
for some constants $c>0$ and $\beta\in (0,1)$ independent on $\e$. By the Ascoli-Arzel\`a   theorem, given any sequence $\e_k\to 0^+$, there exist a subsequence, still denoted by  $\e_k$ and a function $v \in C^1(\overline{\Omega})$  such that $u_\e\to v$ in $C^1(\overline{\Omega})$. Passing to the limit  as $k \to \infty$ in the weak formulation of \eqref{eq:ueneum} with $\e$ replaced with $\e_k$ tells us that $v$ is solution to the Neumann problem \eqref{eq:neu2}.  Hence, $v=u$ up to an additive constant. Since the same conclusion is independent of the original sequence $\e_k$, Equation \eqref{conv:C1neu} follows.
\\ Thanks to \eqref{appr:1} and \eqref{pr:ae},  an analogous difference quotient argument as in the proof of \cite[Theorem 8.2]{ben:fre} adapted to (homogeneous) Neumann boundary condition, ensures that $u_\e\in W^{2,2}(\Omega)$.  Equation \eqref{reg:ueneum} is thus established.
\smallskip
\par \noindent
\textit{Step 2.} 
Let $f$, $\Omega$, and $u_\e$ be as in Step 1.
This step is devoted to a version of the inequality \eqref{sempre}
for solutions to Neumann problems. The relevant inequality tells us that
there exists a positive constant $c=c(n,i_a,s_a,\l,\L)$ such that
\begin{align}\label{sempre:neu}c \int_{\{H(\nabla u_\e)=t\}}b_\e(t) &\,|\nabla H(\nabla u_\e)|\, d\H^{n-1}\leq  \int_{\{H(\nabla u_\e)
    >t\}}\frac{f^2}{a_\e\big( H(\nabla u_\e)\big)}\,dx+\,t\int_{\{H(\nabla u_\e)=t\}}|f|\,d\H^{n-1}
    \\
    &-\int_{\partial \Omega\cap \{H(\nabla u_\e)>t\}}a_\e\big( H(\nabla u_\e)\big)\,\B\Big(\big[\tfrac{1}{2}\nabla_\xi H^2(\nabla u_\e)\big]_T\,,\big[\tfrac{1}{2}\nabla_\xi H^2(\nabla u_\e)\big]_T \Big)\,d\H^{n-1}\nonumber
    \end{align}   
    for a.e. $t>0$.
\\ The inequality \eqref{sempre:neu} can be established as follows.
By Stein's extension theorem \cite[Theorem 13.17]{leoni}, the function $u_\e$ can be continued to the entire $\rn$ to a function, still denoted by $u_\e$, such that $u_\e\in W_c^{2,2}(\R^n)\cap C_c^{1,\theta}(\R^n)$.
Thus, there exists a sequence of functions $\{u_{\e,k}\}\subset C_c^\infty(\R^n)$, denoted by $\{u_{k}\}$ for simplicity in what follows,  such that
\begin{equation}\label{pppp}
    u_k\xrightarrow{k\to\infty} u_\e \quad\text{in $W^{2,2}(\R^n)\cap C^{1}(\R^n)$.}
\end{equation}
Hence, an application of \cite[Proposition 2.6]{musina} enables one to deduce that
\begin{equation}\label{pppp1}
    \A_\e(\nabla u_k)\xrightarrow{k\to\infty} \A_\e(\nabla u_\e)\,,\quad \tfrac{1}{2}   \nabla_\xi H^2(\nabla u_k)\xrightarrow{k\to\infty} \tfrac{1}{2}\nabla_\xi H^2(\nabla u_\e)\quad\text{in $W^{1,2}(\R^n)\cap C^0(\R^n)$.} 
\end{equation}
Next,  owing to \eqref{gen21}, \eqref{gen24:neu} and \eqref{def:Ae}, 
\begin{align}\label{cont:4neu}
     &\mathrm{div} [\tfrac{1}{2}\nabla_\xi H^2(\nabla u_k)] \,\A_\e(\nabla u_k)\cdot\nu-  \nabla [\tfrac{1}{2}\nabla_\xi H^2(\nabla u_k)]\, [\A_\e(\nabla u_k)] \cdot \nu
\\
    &= \mathrm{div}_T [\tfrac{1}{2}\nabla_\xi H^2(\nabla u_k)] \,\A_\e(\nabla u_k)\cdot\nu  -  \nabla_T [\tfrac{1}{2}\nabla_\xi H^2(\nabla u_k)]\, [\A_\e(\nabla u_k)]_T \cdot \nu\nonumber
    \\
    &= \mathrm{div}_T [\tfrac{1}{2}\nabla_\xi H^2(\nabla u_k)] \,\A_\e(\nabla u_k)\cdot\nu-\nabla_T\Big( \tfrac{1}{2}\nabla_\xi H^2(\nabla u_k)\cdot\nu\Big)\cdot \big[\A_\e(\nabla u_k)\big]_T\nonumber
    \\
    &\hspace{3cm}+a_\e\big( H(\nabla u_k)\big)\,\B\Big(\big[\tfrac{1}{2}\nabla_\xi H^2(\nabla u_k)\big]_T,\big[\tfrac{1}{2}\nabla_\xi H^2(\nabla u_k)\big]_T \Big)\quad \text{on $\partial \Omega$.}\nonumber
\end{align}
Computations analogous to  \eqref{tr1}-\eqref{cont:3} and \eqref{temp:uk}-\eqref{equa:int}, with just a replacement of the identity \eqref{cont:4} with \eqref{cont:4neu}   in dealing with the integral over $\partial \Omega\cap  \{H(\nabla u_k)>t\}$, enable one to deduce that
\begin{align}\label{equa:intneu}
         \int_{\{t<H(\nabla u_k)<t+h\}}&\,b_\e\big(H(\nabla u_k)\big)\, |\nabla H(\nabla u_k)|^2\, dx 
         \\
         \leq &\,c\,\int_t^{t+h}       \int_{\{H(\nabla u_k)>s\}} \frac{f_k^2}{a_\e\big( H(\nabla u_k)\big)}\,dx\,ds\nonumber
        \\
        &+c\,\int_{\{t<H(\nabla u_k)<t+h\}} H(\nabla u_k) |f_k|\,|\nabla H(\nabla u_k)|\,dx\nonumber
        \\
&+c\,\int_t^{t+h}\int_{\partial \Omega\cap \{H(\nabla u_k)>s\}}\big(\mathrm{div}_T [\tfrac{1}{2}\nabla_\xi H^2(\nabla u_k)]\big)\,\A_\e(\nabla u_k)\cdot\nu\,d\H^{n-1}ds\nonumber
\\
&-c\,\,\int_t^{t+h}\int_{\partial \Omega\cap \{H(\nabla u_k)>s\}}\nabla_T\Big( \tfrac{1}{2}\nabla_\xi H^2(\nabla u_k)\cdot\nu\Big)\cdot \big[\A_\e(\nabla u_k)\big]_T\,d\H^{n-1}ds\nonumber
\\
    &-c\,\int_t^{t+h}\int_{\partial \Omega\cap \{H(\nabla u_k)>s\}}a_\e\big( H(\nabla u_k)\big)\,\B\Big(\big[\tfrac{1}{2}\nabla_\xi H^2(\nabla u_k)\big]_T\,,\big[\tfrac{1}{2}\nabla_\xi H^2(\nabla u_k)\big]_T \Big)\,d\H^{n-1}\,ds\nonumber
\end{align}
for $t, h >0$.
\\
Now, denote by $W^{\frac{1}{2},2}(\partial \Omega)$ the fractional Sobolev space on $\partial \Omega$, and by $W^{-\frac{1}{2},2}(\partial \Omega)$ its dual space. Then, the trace operator
\begin{equation*}
    \mathrm{Tr}:W^{1,2}(\Omega)\to W^{\frac{1}{2},2}(\partial \Omega)\,,
\end{equation*}
is a bounded linear operator. The operator
\begin{equation}\label{nT:cont}
\nabla_T: W^{\frac{1}{2},2}(\partial \Omega)\to W^{-\frac{1}{2},2}(\partial \Omega)\,,
\end{equation}
which associates to a smooth function $v$ its tangential gradient $\nabla_T v$ on $\partial \Omega$ is also linear and bounded. This property \eqref{nT:cont} is established in \cite[Lemma 1.4.1.3]{gris} in the case when $\Omega$ is the half space. The general case of a bounded open set  $\Omega$  with a smooth boundary follows via a standard covering and flattening argument.
\\ 
Therefore, owing to the boundary condition $\A_\e(\nabla u_\e)\cdot \nu=\tfrac{1}{2}\nabla_\xi H^2(\nabla u_\e)\cdot \nu=0$ on $\partial \Omega$ and Equation \eqref{pppp1}, we have that
\begin{align}\label{pap:m}
\bigg|\int_t^{t+h}\int_{\partial \Omega\cap \{H(\nabla u_k)>s\}}& \big(\mathrm{div}_T [\tfrac{1}{2}\nabla_\xi H^2(\nabla u_k)]\big)\,\A_\e(\nabla u_k)\cdot\nu\,d\H^{n-1}ds\bigg|
    \\
    &\leq h\,\|\nabla_T[\tfrac{1}{2}\nabla_\xi H^2(\nabla u_k)]\|_{W^{-\frac{1}{2},2}(\partial \Omega)}\,\| \A_\e(\nabla u_k)\cdot\nu\|_{W^{\frac{1}{2},2}(\partial \Omega)}\nonumber
    \\
    &\leq c\,h\,\|\tfrac{1}{2}\nabla_\xi H^2(\nabla u_k)\|_{W^{1,2}(\Omega)}\,\| \A_\e(\nabla u_k)\cdot\nu\|_{W^{\frac{1}{2},2}(\partial \Omega)}\nonumber
    \\
    &\xrightarrow{k\to\infty} c\,h\,\|\tfrac{1}{2}\nabla_\xi H^2(\nabla u_\e)\|_{W^{1,2}(\Omega)}\,\| \A_\e(\nabla u_\e)\cdot\nu\|_{W^{\frac{1}{2},2}(\partial \Omega)}=0\,,\nonumber
    \end{align}
and similarly
\begin{align}
       \bigg| \int_t^{t+h}\int_{\partial \Omega\cap \{H(\nabla u_k)>s\}}&\nabla_T\Big( \tfrac{1}{2}\nabla_\xi H^2(\nabla u_k)\cdot\nu\Big)\cdot \big[\A_\e(\nabla u_k)\big]_T\,d\H^{n-1}ds\bigg|
       \\
       &\leq h\,\big|\big|\nabla_T\big(\tfrac{1}{2}\nabla_\xi H^2(\nabla u_k)\cdot \nu\big)\big|\big|_{W^{-\frac{1}{2},2}(\partial \Omega)}\,\|\A_\e(\nabla u_k)\|_{W^{\frac{1}{2},2}(\partial \Omega)}\nonumber
       \\
       &\leq c\,h\,\|\tfrac{1}{2}\nabla_\xi H^2(\nabla u_k)\cdot \nu\|_{W^{\frac{1}{2},2}(\partial \Omega)}\,\|\A_\e(\nabla u_k)\|_{W^{\frac{1}{2},2}(\partial \Omega)}\nonumber
       \\
       &\xrightarrow{k\to\infty}c\,h\,\|\tfrac{1}{2}\nabla_\xi H^2(\nabla u_\e)\cdot \nu\|_{W^{\frac{1}{2},2}(\partial \Omega)}\,\|\A_\e(\nabla u_\e)\|_{W^{1,2}(\Omega)}=0\,.\nonumber
    \end{align}
On the other hand, from \eqref{pppp1} we infer that
\begin{align}\label{pap:m1}
\lim_{k\to\infty}&\int_t^{t+h}\int_{\partial \Omega\cap \{H(\nabla u_k)>s\}}a_\e\big( H(\nabla u_k)\big)\,\B\Big(\big[\tfrac{1}{2}\nabla_\xi H^2(\nabla u_k)\big]_T\,,\big[\tfrac{1}{2}\nabla_\xi H^2(\nabla u_k)\big]_T \Big)\,d\H^{n-1}\,ds
    \\
    &=\int_t^{t+h}\int_{\partial \Omega\cap \{H(\nabla u_\e)>s\}}a_\e\big( H(\nabla u_\e)\big)\,\B\Big(\big[\tfrac{1}{2}\nabla_\xi H^2(\nabla u_\e)\big]_T\,,\big[\tfrac{1}{2}\nabla_\xi H^2(\nabla u_\e)\big]_T \Big)\,d\H^{n-1}\,ds\,.\nonumber
    \end{align}
One can now make use of Equations \eqref{pap:m} -- \eqref{pap:m1} in passing to the limit as $k\to \infty$ in the inequality \eqref{equa:intneu}, and argue as in the proof of 
\eqref{sempre} to establish the inequality
%
\eqref{sempre:neu}. 

\smallskip
\par
\noindent\textit{Step 3.} Having the inequality \eqref{sempre:neu} at disposal, an argument  along the  same lines as in Step 3 of the proof for Dirichlet problems yields  the inequality \eqref{stima:dirichlet} under the assumptions \eqref{fcinf} and \eqref{deom:inf}.
One has just to replace the inequality \eqref{sempre} with
\eqref{sempre:neu}, and note that,  thanks to \eqref{bound:nabH1}, 
\begin{equation*}
    \bigg|a_\e\big( H(\nabla u_\e)\big)\,\B\Big(\big[\tfrac{1}{2}\nabla_\xi H^2(\nabla u_\e)\big]_T\,,\big[\tfrac{1}{2}\nabla_\xi H^2(\nabla u_\e)\big]_T \Big)\bigg|\leq c\, b_\e\big(H(\nabla u_\e)\big)\,H(\nabla u_\e)\,|\B|\quad \text{on $\partial \Omega$,}
\end{equation*}
for some constant $c=c(n,\l,\L)$. 

\noindent \textit{Steps 4 and 5.} 
The additional assumptions  \eqref{deom:inf} and \eqref{fcinf}
can be removed via arguments completely analogous to those employed in Step 4 and Step 5 for Dirichlet problems.
\end{proof}

\section{Convex domains: proof of Theorem \ref{thm:conv}}
The proof of Theorem \ref{thm:conv} parallels that of Theorem \ref{thm:dirneu}. It is fact simpler, as the convexity of the domain $\Omega$ ensures that the integrals over the boundary in the inequalities \eqref{sempre} and \eqref{sempre:neu} are nonnnegative, and hence can just be disregarded in the subsequent steps.

\begin{proof}[Proof of Theorem \ref{thm:conv}, sketched]
\noindent\textit{Step 1.} This step agrees with Step 1 in the proof of Theorem \ref{thm:dirneu}.
\smallskip\par
\noindent\textit{Step 2.} The convexity assumption on $\partial \Omega$ plays a crucial role here. Indeed, thanks to  \eqref{feb315}, one has that
\begin{equation*}
    \int_{\partial \Omega\cap \{H(\nabla u_\e)>t\}}b_\e\big(H(\nabla u_\e)\big)\,H(\nu)\,H(\nabla u_\e)\,\mathrm{tr}\,\B^H\,d\H^{n-1}\geq 0
\end{equation*}
and
\begin{equation*}
    \int_{\partial \Omega\cap \{H(\nabla u_\e)>t\}}a_\e\big( H(\nabla u_\e)\big)\,\B\Big(\big[\tfrac{1}{2}\nabla_\xi H^2(\nabla u_\e)\big]_T\,,\big[\tfrac{1}{2}\nabla_\xi H^2(\nabla u_\e)\big]_T \Big)\,d\H^{n-1}\geq 0
\end{equation*}
for a.e. $t>0$.
Therefore,  the inequalities \eqref{sempre} and \eqref{sempre:neu} can be replaced with 
\begin{align}
\label{sempreconv}
c\int_{\{H(\nabla u_\e)=t\}}b_\e(t)\,|\nabla H(\nabla u_\e)|\, d\H^{n-1}\leq &\int_{\{H(\nabla u_\e)
    >t\}}\frac{f^2}{a_\e\big( H(\nabla u_\e)\big)}\,dx+\,t\,\int_{\{H(\nabla u_\e)=t\}}|f|\,d\H^{n-1}
\end{align}   
for a.e. $t>0$, for both Dirichlet and Neumann problems.
\smallskip\par
\noindent\textit{Step 3.} This step is analogous to and simpler than the corresponding step in the proof of Theorem \ref{thm:dirneu}, since the right-hand side of \eqref{sempreconv} does not contain integrals over $\partial \Omega$.
In particular, the function $G$ in \eqref{G:s} does not come into play, and hence one can choose $s_0=|\Omega|/2$ in \eqref{stima:Gs0}. Therefore the constant $c_0$ in \eqref{stima:convex} only depends on the quantities specified in the statement.

\smallskip\par
\noindent\textit{Step 4.} This step is devoted to removing the assumption \eqref{deom:inf}. The set  $\Omega$ can be approximated by  a sequence of smooth convex domains $\Omega_m$ satisfying \eqref{conv:Omm} -- \eqref{lipca:Omm}.
This can be done, for instance, as in \cite[equation (7.73)]{accfm} and  \cite[Section 4.1]{BDMS}.
Then one may proceed as in the proof of \ref{thm:dirneu}, starting from \eqref{pap_m2} onward, since the  constant $c_0$ of Step 3 explicitly depends on $|\Omega|,\mathcal{L}_\Omega, d_\Omega$.  

\smallskip\par
\noindent\textit{Step 5.} This step agrees with its analogue in Theorem \ref{thm:dirneu}.

\end{proof}

\section{Natural growth in the gradient: proof of Theorems \ref{thm:example1} and \ref{thm:example} }\label{sec:naturalgrowth}

\begin{proof}[Proof of Theorems \ref{thm:example1} and \ref{thm:example}]
    The idea is to reduce the problems \eqref{Ex:dir}
and \eqref{Ex:neu} to problems of the form \eqref{eq:dir2} and \eqref{eq:neu2}, respectively, 
via the Kazdan-Kramer transformation, in the same spirit as \cite{figuer}. 
\\ 
Let $u$ be a solution to either of problems \eqref{eq:dir2} and \eqref{eq:neu2}.
Define the function $\psi : [0, \infty) \to [0, \infty)$ as
\begin{equation}
    \psi (t)=\int_0^t e^{\frac{\kappa}{p-1}s}\,d \tau \quad \text{for $s\geq 0$.}
\end{equation}
Set
 $$v=\psi(u).$$
Since $u\in W^{1,p}(\Omega)\cap L^\infty(\Omega)$ and $\psi$ is  Lipschitz continuous on bounded subsets of $[0, \infty)$, by the chain rule for Sobolev functions   $v\in W^{1,p}(\Omega)$. Moreover, if $u$ solves  \eqref{eq:dir2}, then $u\in W^{1,p}_0(\Omega)$, and hence 
$v\in W^{1,p}_0(\Omega)$ as well. 
Assume, instead, that $u$ solves \eqref{eq:neu2}. Since  
\begin{equation}
    \nabla v=\psi'(u)\,\nabla u= e^{\frac{\kappa}{p-1}u}\,\nabla u\,,
\end{equation}
and the function $\A$ is
$(p-1)$-homogeneous,
one has
\begin{equation*}
    \A(\nabla v)=e^{\kappa\,u}\,\A(\nabla u)\,.
\end{equation*}
Therefore, 
the function $v$  satisfies the co-normal Neumann boundary condition $\A(\nabla v)\cdot \nu=0$. Let us point out that the formal computation above can be justified via a parallel argument in the weak formulation of the  problem \eqref{eq:neu2}. 
\\ The same remark applies to the following computation of $ \Delta_p^H v$, which, in particular, rests upon the $(p-2)$-homogeneity of the function $\nabla_\xi \A$ and the $1$-homogeneity of $H$:
\begin{align*}
    \Delta_p^H v & =\mathrm{tr}\Big(\nabla_\xi\A(\nabla v)\,\nabla^2v\Big)=[\psi'(u)]^{p-2}\,\Big\{\nabla_\xi\A(\nabla u)\,\big[\psi'(u)\,\nabla ^2 u+\psi''(u)\nabla u\otimes \nabla u \big] \Big\}
    \\
    &=[\psi'(u)]^{p-1}\Big\{\mathrm{tr}\big(\nabla_\xi\A(\nabla u)\,\nabla^2 u\big)+\frac{\psi''(u)}{\psi'(u)}\,\tfrac{1}{p}\nabla_\xi^2H^p(\nabla u)\,\nabla u\cdot \nabla u \Big\}
    \\
    &= [\psi'(u)]^{p-1}\Big\{\Delta_p^H u+(p-1)\,\frac{\psi''(u)}{\psi'(u)}\,H^p(\nabla u)  \Big\}.
\end{align*}
%
Consequently, $v$ is a weak solution to the equation
\begin{equation}\label{eqv}
    -\Delta_p^H v=\widehat f\quad\text{in $\Omega$\,,}
\end{equation}
coupled with homogeneous Dirichlet or Neumann boundary conditions on $\partial \Omega$, where $\widehat f: \Omega \to \mathbb R$ is the function given by
$$\widehat f (x)= e^{\frac{\kappa}{p-1}\,u}\Big\{
 g(x,u,\nabla u)-\kappa\, H^p(\nabla u)\Big\} \quad \text{for $x \in \Omega$}.$$
   Thanks to the assumption \eqref{g}, it satisfies the estimate: $$|\widehat f(x)|\leq e^{\frac{|\kappa|}{p-1}\,\|u\|_{L^\infty(\Omega)}}\,|f(x)|\quad \text{for a.e. $\in \Omega$.}
$$
Thereby, we can apply Theorem \ref{thm:dirneu} or \ref{thm:conv}, depending on whether Theorem \ref{thm:example1} or \ref{thm:example} is under consideration, 
to the Dirichlet or Neumann problem for the equation \eqref{eqv} and infer that
\begin{equation*}
    \|\nabla u\|_{L^\infty(\Omega)}\leq e^{\frac{|\kappa|}{p-1}\|u\|_{L^\infty(\Omega)}}\,\|\nabla v\|_{L^\infty(\Omega)}\leq c\,e^{{ \frac{p|\kappa|}{(p-1)^2}}\|u\|_{L^\infty(\Omega)}}\,\|f\|^{\frac{1}{p-1}}_{X_n(\Omega)}, 
\end{equation*}
where $c$ is the constant appearing in \eqref{stima:dirichlet} or \eqref{stima:convex}, with $b(t)=t^{p-1}$.
The inequality \eqref{stima:ex1} is thus established.
\end{proof}

\bigskip{}{}

 \par\noindent {\bf Acknowledgments.} C.A. Antonini is a postdoctoral  fellow of the National Institute for Advanced Mathematics (INdAM) at the University of Florence. Part of this work was carried out while he was a postdoctoral fellow at the University of Parma.

\bigskip{}{}

 \par\noindent {\bf Data availability statement.} Data sharing not applicable to this article as no datasets were generated or analysed during the current study.

\section*{Compliance with Ethical Standards}\label{conflicts}

\smallskip
\par\noindent
{\bf Funding}. This research was partly funded by:
\\ (i) GNAMPA   of the Italian INdAM - National Institute of High Mathematics (grant number not available)  (C.A. Antonini , A. Cianchi);
\\ (ii) Research Project   of the Italian Ministry of Education, University and
Research (MIUR) Prin 2017 ``Direct and inverse problems for partial differential equations: theoretical aspects and applications'',
grant number 201758MTR2 (A. Cianchi);
\\ (iii) Research Project   of the Italian Ministry of Education, University and
Research (MIUR) Prin 2022 ``Partial differential equations and related geometric-functional inequalities'',
grant number 20229M52AS, cofunded by PNRR (A. Cianchi);

\bigskip
\par\noindent
{\bf Conflict of Interest}. The authors declare that they have no conflict of interest.


\begin{thebibliography}{99}
\bibitem{ADP} B. Abdellaoui, A.  Dall'Aglio, I. Peral, \textit{
Some remarks on elliptic problems with critical growth in the gradient.}
J. Diff. Equat. 222 (2006),  21--62.

\bibitem{ACCZ} A. Alberico, I. Chlebicka, A. Cianchi, A. Zatorska-Golstein, \textit{Fully anisotropic elliptic problems with minimally integrable data}, Calc. Var. Partial Differ. Equat. 58 (2019), no. 6, Paper No. 186, 50 pp.



\bibitem{al:al} A. Alvino, V. Ferone, G. Trombetti, P.-L. Lions, \textit{Convex symmetrization and applications}, Ann. Inst. Henri Poincar{\'e}, Anal. Non Lin{\'e}aire 14 (1997), 275--293.
 

\bibitem{Antonini} C.A. Antonini, \textit{Smooth approximation of Lipschitz domains, weak curvatures and isocapacitary estimates}, Calc. Var. Partial Differ. Equat.
 63  (2024)  no. 4, Paper No. 91, 34 pp.   


\bibitem{Antonini1}
C.A. Antonini, \textit{Local and global gradient regularity for uniformly elliptic quasilinear equations}, in preparation.

\bibitem{acf}  C.A. Antonini, G. Ciraolo, A. Farina, \textit{Interior regularity results for inhomogeneous anisotropic quasilinear equations}, Math. Ann. 387 (2023),  1745--1776.

\bibitem{accfm} C.A. Antonini, A. Cianchi, G. Ciraolo, A. Farina, V.G. Maz'ya, \textit{Global second order estimates in anisotropic elliptic problems}, Proc. Lond. Math. Soc.  130  (2025), no. 3, Paper No. e70034, 60 pp. 

\bibitem{baroni} P. Baroni, \textit{Riesz potential estimates for a general class of quasilinear equations}, Calc. Var. Partial Differ. Equat. 53 (2015), 803--846.


\bibitem{beck:min} L. Beck, G. Mingione, \textit{Lipschitz Bounds and Nonuniform Ellipticity},  Comm. Pure Appl. Math. 73 (2020),  944--1034.

\bibitem{bennett} C. Bennett, \textit{Intermediate spaces and the class $Llog_+L$}, Ark. Mat. 11 (1973), 215--228.
 
\bibitem{ben:fre} A. Bensoussan, J, Frehse, \textit{Regularity results for nonlinear elliptic systems and applications}, Applied Mathematical Sciences, Vol. 151, Springer-Verlag, Berlin 2022, pp. xii+441

\bibitem{BDMS} V. B\"ogelein, F. Duzaar, P. Marcellini, C. Scheven, \textit{Boundary regularity for elliptic systems with $p, q$-growth}, J. Math. Pures Appl.  159 (2022), 250--293 

\bibitem{br:ziemer} J.E. Brothers, W.P. Ziemer, \textit{Minimal rearrangements of Sobolev functions}, J. Reine
 Angew. Math 384 (1988), 153--179 

\bibitem{rivas}  E. Cabezas-Rivas, S. Moll, M. Solera, \textit{Weak solutions of anisotropic (and crystalline) inverse mean curvature flows as limits of $p$-capacitary potentials}, J. Funct. Anal. 287 (2024), no. 11, Paper No. 110642, 56 pp. 

\bibitem{carro} M. Carro, L. Pick, J. Soria, V. Stepanov, \textit{ On embeddings between
 classical Lorentz spaces},  Math. Ineq. Appl. 4  (2001), 397--428

 
 
\bibitem{CHN}  T. Cheng, G. Huang, Y. Niu, \textit{Classification of solutions to Finsler--Laplacian equation},  Discrete and Continuous Dynamical Systems  48  (2026),  577--629


\bibitem{cia90} A. Cianchi, \textit{Elliptic equations on manifolds and isoperimetric inequalities}, Proc. R. Soc. Edinburgh 114 (1990),   213-227

\bibitem{cia92}  A. Cianchi, \textit{Maximizing the $L^\infty$-norm of the gradient of solutions to the Poisson equation}. J. Geom. Anal. 2 (1992) 499-515





\bibitem{cia11} A. Cianchi, V. Maz'ya, \textit{Global Lipschitz regularity for a class of quasilinear elliptic equations}, Comm. Part. Diff. Equat. 36 (2011), 100--133 


\bibitem{cia14} A. Cianchi, V. Maz'ya, \textit{Global boundedness of the gradient for a class of nonlinear elliptic systems}, Arch. Ration. Mech. Anal. 212 (2014), 129--177

\bibitem{cia141} A. Cianchi, V. Maz'ya, \textit{Gradient regularity via rearrangements for p-Laplacian type elliptic boundary value problems}, J. Eur. Math. Soc.   16 (2014), 571--595


\bibitem{cisa1}  A. Cianchi, P.Salani, \textit{Overdetermined anisotropic elliptic problems},  Math. Ann. 345 (2009),   859--881

\bibitem{cfr20} G. Ciraolo, A. Figalli, A. Roncoroni, \textit{Symmetry results for critical anisotropic $p$-Laplacian equations in convex cones}, Geom. Funct. Anal. 30   (2020), 770--803

\bibitem{cfv} M. Cozzi, A. Farina, E. Valdinoci, \textit{Monotonicity formulae and classification results for singular, degenerate, anisotropic PDEs}, Adv. Math. 293 (2016), 34--381

\bibitem{defil:min} C. De Filippis, G. Mingione, \textit{Lipschitz bounds and nonautonomous integrals}, Arch. Ration. Mech. Anal. 242 (2021),  973--1057

\bibitem{defil:pic} C. De Filippis, M. Piccinini, \textit{Borderline global regularity for nonuniformly elliptic systems}, Int. Math. Res. Not.  (2023),  17324--17376.

\bibitem{fdp} F. Della Pietra, N. Gavitone, C. Xia,\textit{
Motion of level sets by inverse anisotropic mean curvature} 
Comm. Anal. Geom. 31 (2023),  97--118

\bibitem{diben} E. Dibenedetto, \textit{$C^{1+\alpha}$ local regularity of weak solutions of degenerate elliptic equations}, Nonlinear Anal. 7 (1983), 827--850

\bibitem{dpv} S. Dipierro, G. Poggesi, E. Valdinoci, \textit{Radial symmetry of solutions to anisotropic and weighted diffusion equations with discontinuous nonlinearities}, Calc. Var. Partial Differ. Equat. 
 (2022) no. 2, Paper No. 72, 31 pp.


\bibitem{dzhu}  H. Dong, H. Zhu, \textit{Gradient estimates for singular p-Laplace type equations
 with measure data}, J. Europ. Math. Soc. 26 (2024), 3939--3985 

\bibitem{duz:min} F. Duzaar, G. Mingione, \textit{ Local Lipschitz regularity for degenerate elliptic systems}
 Ann. Inst. Henri Poincar\'e 27 (2010), 1361--1396 

\bibitem{EMSV} F. Esposito, L. Montoro, B. Sciunzi, D. Vuono, \textit{Asymptotic behaviour of solutions to the anisotropic doubly critical equation}, Calc. Var. Partial Differ. Equat.  63 (2024)  Paper No. 77, 44 pp

  \bibitem{FSV} A. Farina, B. Sciunzi, D. Vuono, \textit{Liouville-type results for some quasilinear anisotropic elliptic equations}, Nonlinear Anal.  238 (2024) Paper No. 113402, 15 pp.

\bibitem{figuer} D. G. de Figueiredo, J.P. Gossez,
 H. R. Quoirin,  P. Ubilla, \textit{Elliptic equations involving the p-Laplacian and a gradient term having natural growth}, Rev. Mat. Iberoam. 35 (2019),  173--194


\bibitem{fe:ka} V. Ferone, B. Kawohl, \textit{Remarks on a Finsler-Laplacian}, Proc. Am. Math. Soc.  137 (2009), 247--253 


\bibitem{femu} V. Ferone, F. Murat, \textit{Nonlinear problems having natural growth in the gradient: an existence result when the source terms are small}, Nonlinear Anal.  42 (2000), 1309--1326 



\bibitem{frehse} J. Frehse, \textit{A note on H\"older continuity of solution of variational problems},  Abh. 
Math. Sent. Hamburg, 43 (1975), 59--63. 
 

\bibitem{giusti} E. Giusti, \textit{Direct methods in the calculus of variations}, World Scientific Publishing Co., Inc., River Edge, NJ, 2003, viii+403 pp.

\bibitem{gris} P. Grisvard, \textit{Elliptic problems in nonsmooth domains}, Monographs and Studies in Mathematics, 24. Pitman (Advanced Publishing Program), Boston, MA, 1985, xiv+410 pp.


\bibitem{kaz}  J.L. Kazdan, R.J. Kramer, \textit{Invariant criteria for existence of solutions
 to second-order quasilinear elliptic equations}, Comm. Pure Appl. Math. 31 (1978),
619--645
 
\bibitem{kus:min}
T. Kuusi, G. Mingione \textit{Linear potentials in nonlinear potential theory}, Arch. Ration. Mech. Anal.
 207 (2013), 215--246 

\bibitem{kus:min1} T. Kuusi, G. Mingione, \textit{A nonlinear Stein theorem}, Calc. Var. Part. Diff. Equat. 51 (2014),   45--86


\bibitem{ladyz} O. A. Ladyzhenskaya, N. N. Ural'tseva, \textit{Linear and quasilinear elliptic equations}, Academic Press, New York-London, 1968, xviii+495

\bibitem{leoni} G. Leoni, \textit{A first course in Sobolev spaces}, Second edition. Graduate Studies in Mathematics, 181. American Mathematical Society, Providence, RI, 2017, xxii+734

\bibitem{lieb88} G. M. Lieberman, \textit{Boundary regularity for solutions of degenerate elliptic equations}, Nonlinear Anal. 12 (1988),  1203--1219

\bibitem{lieb91} G. M. Lieberman, \textit{The natural generalization of the natural conditions of Ladyzhenskaya and Uraltseva for elliptic equations}, Comm. Part. Diff. Equat. 16 (1991),  311--361


\bibitem{maggi} F. Maggi, \textit{Sets of Finite Perimeter and Geometric Variational Problems: An Introduction to Geometric Measure Theory}, Cambridge Studies in Advanced Mathematics, 135 (2012)


\bibitem{mm} M. Marcus, V. J. Mizel, \textit{Absolute continuity on tracks and mappings of 
Sobolev spaces}, Arch. Rational Mech. Anal. 45 (1972), 294--320

\bibitem{mari} L. Mari, M. Rigoli, A.G. Setti, \textit{
On the $1/H$-flow by $p$-Laplace approximation: new estimates via fake distances
under Ricci lower bounds}, Amer. J. Math. 144 (2022), 779--849

\bibitem{mos:mar} G. Marino, S. Mosconi, \textit{Lipschitz regularity for solutions of a general class of elliptic equations}, Calc. Var. Part. Diff. Equat. 63 (2024), no. 1, Paper No. 25, 40 pp.

\bibitem{maz} V. G. Maz'ya, \textit{Sobolev spaces with applications to elliptic partial differential equations}, Grundlehren der mathematischen Wissenschaften [Fundamental Principles of Mathematical Sciences] 342, Springer, Heidelberg, 2011, xxviii+866



\bibitem{Mazya69} V. Maz'ya, On weak solutions of the Dirichlet and Neumann
problems, Trusdy Moskov. Mat. Obse.  {\bf 20} (1969), 137--172  (in Russian); English
translation in Trans. Moscow Math. Soc.  {\bf 20}, 135--172 

 \bibitem{MST} L. Montoro, B. Sciunzi, A. Trombetta, \textit{A comparison principle for a doubly singular quasilinear anisotropic problem}, Comm. Contemp. Math. 27 (2025), no. 1, Paper No. 2350060, 18 pp.



\bibitem{rmoser} R. Moser, \textit{The inverse mean curvature flow and
 p-harmonic functions}, J. Eur. Math. Soc.  9 (2007),  77--83

\bibitem{musina} R. Musina, \textit{On the continuity of the Nemitsky operator induced by a Lipschitz continuous map}, Proc. Amer. Math.
 Soc. 111 (1991),  1029--1041

\bibitem{ok} J. Ok, \textit{Gradient continuity for $p(\cdot)$-Laplace systems}, Nonlinear Anal. 141 (2016) 139--166

\bibitem{pick}  L. Pick, A. Kufner, O. John, S. Fu\u cik, \textit{Function spaces Vol. 1}, De Gruyter Series in Nonlinear Analysis and Applications 14, Walter de Gruyter \& Co., Berlin, 2013, xvi+479


\bibitem{ser} J. Serrin, \textit{The solvability of boundary value problems. In Mathematical developments arising from Hilbert problems} (Northern Illinois Univ., 1974), 507-524. Proc.
 Sympos. Pure Math. 28, Amer. Math. Soc., Providence, RI, 1976

\bibitem{simon} L.M. Simon, \textit{Lectures on geometric measure theory}, Proceedings of the Centre for Mathematical Analysis, Australian National University, 1983, vii+272 pp.

 \bibitem{Sp:V} G. Spadaro, D. Vuono, \textit{Second-order boundary estimates for solutions to a class of quasilinear elliptic equations}, Preprint, {arXiv}:2507.16402, (2025) 

\bibitem{stampacchia} G. Stampacchia, \textit{On some regular multiple integral problems in the calculus of variations}, Comm. Pure Appl. Math. 16 (1963), 383--421


\bibitem{tal} G. Talenti, \textit{Nonlinear elliptic equations, rearrangements of functions and Orlicz spaces}, Ann. Mat. Pura Appl.  120 (1979), 160--184

\bibitem{Talenti} G. Talenti, \textit{Boundedness of minimizers}, Hokkaido Math. J. 19 (1990),  259--279

\bibitem{tol} P. Tolksdorf, \textit{Regularity for a more general class of quasilinear elliptic equations}, J. Diff. Equat. 51 (1984),  126--150

\bibitem{ziemer} W.P. Ziemer, \textit{Weakly differentiable functions, Sobolev spaces and functions of bounded variation},  Graduate Texts in Mathematics, 120. Springer-Verlag, New York, 1989. xvi+308 pp.

\end{thebibliography}
\end{document}